\newtheorem{thm}{Theorem}\numberwithin{thm}{section}
\newtheorem{dfn}[thm]{Definition}
\newtheorem{prop}[thm]{Proposition}
\newtheorem{lem}[thm]{Lemma}
\newtheorem{cor}[thm]{Corollary}
\newtheoremstyle{plainNoItalics}{}{}{\normalfont}{}{\bfseries}{.}{ }{}
\theoremstyle{plainNoItalics}
\newtheorem{ex}[thm]{Example}
\newtheorem{rem}[thm]{Remark}
\newcommand{\comms}[1]{{\color{red} #1}}
\newcommand{\R}{\mathbb{R}}
\newcommand{\N}{\mathbb{N}}
\newcommand{\CF}{\mathbb{C}}
\newcommand{\Q}{\mathbb{Q}}
\renewcommand{\l}{\lambda}
\newcommand{\Norm}[2]{\left\| #2\right\|_{#1}}
\renewcommand{\L}[2]{L^{#1}(#2)}
\def\rd{\R^d}
\def\lrd{L^2(\rd)}
\newcommand{\fd}{d_\pi}
\newcommand{\Pf}{\mathrm{Pf}}
\newcommand{\onb}{orthonormal basis}
\newcommand{\onbs}{orthonormal bases}
\newcommand{\rep}{representation}
\renewcommand{\H}{\mathbf{H}_d}
\newcommand{\Z}{\mathbb{Z}}
\newcommand{\zd}{\mathbb{Z}^d}
\newcommand{\bracket}[2]{\left\langle #1 , #2\right\rangle}
\newcommand{\nn}{\nonumber}
\newcommand{\h}{\mathfrak{h}_d}
\newcommand{\Abs}[1]{\left| #1\right|}
\newcommand{\subbracket}[3]{{\left\langle #2, #3\right\rangle}_{ #1}}
\newcommand{\id}{\mathop{\mathrm{id}}}
\newcommand{\HG}[2]{\mathbf{H}_{#2, #1}}
\newcommand{\HA}[2]{\mathfrak{h}_{#1, #2}}
\newcommand{\inv}{^{-1}}
\newcommand{\Lie}[1]{\frak{#1}}
\newcommand{\ggl}{\Lie{g}}
\newcommand{\pid}{\mathfrak{m}}
\newcommand{\qa}{\mathfrak{h}}
\newcommand{\Liez}[1]{\mathfrak{z}(\mathfrak{#1})}
\newcommand{\PID}{M}
\newcommand{\QA}{H}
\newcommand{\subgr}{\leq}
\newcommand{\nsubgr}{\vartriangleleft}
\newcommand{\dimG}{n}
\newcommand{\rquo}{\setminus}
\newcommand{\Orbit}{\mathcal{O}}
\newcommand{\cO}{\mathcal{O}}
\newcommand{\cH}{\mathcal{H}}
\newcommand{\HS}{\mathcal{H}}
\newcommand{\RS}{\mathcal{H}_{\pi}}
\newcommand{\Rspan}[1]{\mathbb{R}\text{-}\mathrm{span}\{ #1\}}
\newcommand{\Qspan}[1]{\mathbb{Q}\text{-}\mathrm{span}\{ #1\}}
\newcommand{\Gmeas}[2]{\mu_{#1}(#2)}
\newcommand{\Ad}{\mathop{\mathrm{Ad}}}
\newcommand{\coAd}{\mathop{\mathrm{Ad}^*}}
\newcommand{\coad}{\mathop{\mathrm{ad}^*}}
\newcommand{\indR}[3]{\mathop{\mathrm{ind}^{#3}_{#2}(#1)}}
\newcommand{\PFD}{F} 
\newcommand{\pr}{\mathop{\mathrm{pr}}}
\newcommand{\Aut}{\mathop{\mathrm{Aut}}}
\newcommand{\dil}{\delta}
\newcommand*\bigcdot{\mathpalette\bigcdot@{.5}}
\newcommand*\bigcdot@[2]{\mathbin{\vcenter{\hbox{\scalebox{#2}{$\m@th#1\bullet$}}}}}
\begin{document}
\begin{abstract}

Let $G$ be a connected, simply connected nilpotent group and $\pi$
be a square-integrable
 irreducible unitary representation modulo its center $Z(G)$ on $\L{2}{\R^d}$. We prove
that under reasonably weak conditions on $G$ and $\pi$ there exist a
discrete subset $\Gamma$ of $G/Z(G)$  and some (relatively) compact set $\PFD \subseteq
\R^d$
such that
	\begin{align*} 
		\bigl\{ \Abs{F}^{-1/2} \hspace{2pt} \pi(\gamma) 1_\PFD \mid \gamma \in \Gamma \bigr\}
	\end{align*}
forms an orthonormal basis of $\L{2}{\R^d}$. This construction
generalizes the well-known example of Gabor orthonormal bases in
time-frequency analysis.

The main theorem covers graded Lie groups with one-dimensional
center. In the presence of a rational structure, the set $\Gamma $ can
be chosen to be a uniform subgroup of $G/Z$. 
\end{abstract}

\title[Orthonormal Bases and Nilpotent Lie Groups]{Orthonormal Bases
in the Orbit of Square-Integrable Representations of Nilpotent Lie
  Groups} 

\author{Karlheinz Gr\"ochenig}
\address{Faculty of Mathematics \\
University of Vienna \\
Oskar-Morgenstern-Platz 1 \\
A-1090 Vienna, Austria}
\email{karlheinz.groechenig@univie.ac.at}
\subjclass[2000]{}
\date{}

\author{David Rottensteiner}
\address{Faculty of Mathematics \\
University of Vienna \\
Oskar-Morgenstern-Platz 1 \\
A-1090 Vienna, Austria}
\email{david.rottensteiner@univie.ac.at}
\subjclass[2000]{22E27,22E25,42C15}
\keywords{Nilpotent Lie group, square-integrable representation,
  uniform subgroup, quasi-lattice, Kirillov theory, flat orbit
  condition, graded Lie group, parametrization of coadjoint orbits}

\thanks{K.\ G.\ and D.\ R. were
  supported in part by the  project P26273 - N25  of the
Austrian Science Fund (FWF)}

\maketitle

\section{Introduction}

Haar, Gabor, wavelet --- these names all refer to structured
orthonormal bases for $L^2(\R ^d)$. So far these have been objects of
applied harmonic analysis, because they are useful to distinguish and
classify certain features of functions, such as smoothness~\cite{meyer-1} or
phase space localization~\cite{fg97jfa}. From an abstract point of view their common
origin lies in the representation theory of certain groups. Each of
these orthonormal bases is in the orbit of a unitary representation
of a locally compact group. Specifically, the Gabor basis arises by
applying phase space shifts to the characteristic function $1_{[0,1]}$
to yield the \onb\ $\{ e^{2\pi i l x} 1_{[0,1]}(x-k) \mid  k,l \in \Z \}$
of $L^2(\R)$; the Haar basis arises by applying shifts and dilations
to a  ``wavelet'', e.g., the function $ \psi (x)= 1_{[0,1/2]}(x) -
1_{[1/2,1]}(x)$  yields the \onb $\{ 2^{j/2} \psi (2^jx-k) \mid  j,k \in \Z \}$ 
 of $L^2(\R)$. 

The underlying question in representation theory is the following.
 Let $
G$ be a locally compact group and $\pi $ be a unitary
representation of $G$ on a Hilbert space $\HS $. Under which
conditions do there exist a template $f \in \HS $ and a discrete subset
$\Gamma \subseteq G$, such that $\{ \pi (\l )f \mid \gamma \in
\Gamma \}$ is an \onb\ for $\HS $? In other words, do there exist
orbits of the representation that contain an
\onb ?

To our knowledge the existence problem seems to be a new question in
abstract harmonic analysis. Beyond the special cases that are studied
for applications (wavelet bases, Gabor bases) no results seem to be
known, let alone general results. 

Wavelet theory deals with the construction of \onbs\  in the orbit of
the quasi-regular representation of certain solvable  products,
usually  a semidirect product of $\rd  $ with some matrix action~\cite{GM92, GLWW04}.
A special aspect of Gabor analysis is the existence and construction of
\onbs\ in the orbit of the Schr\"odinger representation of the
Heisenberg group~\cite{FollPhSp}. To our knowledge, certain decomposition results of 
representations of a semisimple Lie group restricted to  a
lattice subgroup yield  the existence of an \onb\ in the orbit of
discrete series representations. This was  pointed out to us by Yurii
Neretin~\cite{neretin} and is also implicit in Bekka's work~\cite{bekka04}.

In this paper we study the existence of \onbs\ in the orbit of
irreducible unitary representations of a nilpotent Lie group. As it
is not difficult to see that a necessary condition for the existence
of an \onb\ (or a frame) in the orbit of a representation is the
square-integrability modulo a suitable subgroup~\cite{ancalevi01}, we
will restrict our attention to irreducible unitary representations of
a connected, simply connected nilpotent Lie group that is square-integrable
modulo its center. So our precise question is as follows: 

\emph{Let $G$ be a connected,  simply connected nilpotent Lie group and $(\pi , \HS )$
be an irreducible unitary \rep\ of $G$ that is square-integrable modulo
the center of $G$ (we write $\pi \in SI/Z(G)$). Does there
exist a set  $\Gamma \subseteq G/Z(G) $ and a vector $f \in \HS$,
such that $\{ \pi(\gamma )f \mid \gamma \in \Gamma \}$ is an \onb\ of
$\HS$?}  

The answer is obviously yes for the Heisenberg group $\H$ and the
(Schr\"odinger) representations $\pi _\l (x,y,z) f(t) = e^{2\pi i \l z}
  e^{2\pi \l y t} f(t + x)$ on $\L{2}{\R ^d}$,  because $\{\pi
  _\l (k,\l \inv l, 0) 1_{[0,1]}(t)\mid k,l\in \Z^d \}$  is an \onb\ for
  $\L{2}{\R}$. For low-dimensional nilpotent Lie groups the question
  has been studied in A.~H\"ofler's thesis~\cite{Hoe14}. She found that
  the answer is yes for all nilpotent Lie  groups $G$ of dimension
  $\dim(G) \leq 6$ and all irreducible \rep s in $SI/Z(G)$. The
  analysis of 
  explicit examples in low dimension justifies the conjecture that the
  answer to our question should be  affirmative for all nilpotent
  Lie groups  and
  \rep s in $SI/Z $, or at least for a significant, large class of
  nilpotent groups. However, as H\"ofler's work is based on  the
  case-by-case inspection of the 
  explicit formulas for the irreducible representations
  in~\cite{Nielsen}, her work gives little indication on what might
  happen for nilpotent groups in higher dimensions. 

In this paper we 
prove a general result about \onbs\ in the orbit of a   square-integrable \rep\
of a nilpotent group. Its spirit is captured in the following special
case. 

\vspace{3mm}

\noindent \textbf{Theorem $0$.} \label{psth}
\emph{  Let $G$ be a graded Lie group with one-dimensional center and $(\pi , \HS ) $ be a
 square-integrable irreducible unitary representation modulo center, $\pi \in SI/Z(G)$.
 Then there exist a discrete
 subset $\Gamma \subseteq G/Z(G)$ and an $f \in \HS $ such that 
the set $\{ \pi (\gamma ) f \mid \gamma \in \Gamma \}$ is an \onb\ of $\HS
$. }

\vspace{3mm}

A more detailed formulation and the proof require the full details  of Kirillov's
construction of irreducible representations via the coadjoint orbits
and thus a non-negligeable amount of preparation.  Other and more
general formulations will be given in Theorems~\ref{MainThmQL},
\ref{MainThmUnif}, and \ref{TVMainThm}. 

Roughly, an irreducible unitary representation of $G$ is parametrized
by a linear functional $l$ acting on the Lie algebra $\Lie{g}$ of $G$
and by a certain subgroup $M$ of $G$, a so-called polarization. The
imposed conditions depend (a) on the structure of $G$, for instance,
we assume that some polarization $M$ is a normal subgroup of $G$ and
that 
we know some crucial facts  about the coadjoint action, and (b) some 
rationality assumptions, for instance 
$\Lie{g}$ has rational   structure coefficients  and  the functional $l$ also has rational
coefficients with respect to the  chosen basis.

(i) The set $\Gamma $ in  Theorem $0$  is not arbitrary. Under
some  rationality assumptions the construction yields a uniform subgroup $\Gamma
$ in $G/Z(G)$ (Theorem~\ref{MainThmUnif} and~\ref{TVMainThm}).
However, we will show that the existence of orthonormal bases is not
tied to the rational structure of the group. Indeed, we will prove  a version of
Theorem $0$ by using a quasi-lattice in $G$ in place of a lattice. Such 
structures exist in arbitrary nilpotent Lie groups~\cite{FGr}.


(ii) In Theorem $0$   we  make heavy use of  the description and
construction of representations with 
 Kirillov's  method of coadjoint orbits.
The \rep\ space $\RS$ is then realized  by $L^2(\rd
)$;  the representation is essentially of the form $\bigl( \pi(x)f \bigr) (\zeta ) =
e^{2\pi i P( x,\zeta )} f(\zeta \cdot x)$, where $P$ is a polynomial
of $x$ and $\zeta$,  and $x$ acts on $\zeta \in \R^d $.
Thus the main theorems prove the existence of a new class of \onbs\
for $L^2(\rd )$. We believe that these bases could be useful in the
non-Euclidean analysis of $\rd $ (viewed as a homogeneous space).  

(iii) Whereas in the low-dimensional examples this action $\zeta \to \zeta \cdot x$ is
always Abelian and roughly a translation $\zeta \cdot x \simeq \zeta +
x$ on $\rd $, this action is generally non-Abelian in higher
dimensions. We will demonstrate  this
new aspect with the example of the meta-Heisenberg group. 

(iv) The construction of the orthonormal basis actually happens in
$G/Z(G)$. Clearly, if $z_\gamma \in Z(G)$ is arbitrary and $\{ \pi
(\gamma ) g \mid  \gamma \in G \}$ is an orthonormal basis for $\cH$, then
so is $\{ \pi (z_\gamma \gamma ) g \mid \gamma \in G \}$. We could
therefore investigate the problem for the projective representations
of the nilpotent group $G/Z(G)$. However, since we use  the Kirillov
construction of irreducible representations, we prefer to work with
the representations of $G$ and accept the ambiguity in the phase
rather than passing to $G/Z(G)$.

(v) In $\HS = L^2(\rd )$ the template  $f$ generating the  orthonormal
orbit is a characteristic function
$f = 1_F $ of a suitable fundamental domain in $\PID \rquo G$. This raises the natural
question whether there are smooth $f \in L^2(\rd )$ that generate \onbs.
This problem has been studied extensively for the Heisenberg group
where the answer is a decisive ``no''. Precisely, if $\{ e^{2\pi i l
  t} g(t + k) \mid k,l \in \Z \}$ is an \onb\ for $L^2(\R )$, then either
$x \mapsto xf(x) \not \in L^2(\R )$ or $f' \not \in L^2(\R )$. This is the
famous Balian-Low theorem, which has inspired a whole direction of
harmonic analysis~\cite{behewa95,CzPow}. Since Kirillov's lemma asserts that every
nilpotent Lie group contains a subgroup isomorphic to the Heisenberg
group, we expect that a Balian-Low type theorem also holds in
the context of  Theorem $0$. Indeed, it is possible to
prove a version of the Balian-Low theorem for arbitrary homogeneous
nilpotent groups. The proof is almost identical to the proof of the
Balian-Low theorem via the theory of deformation of frames
in~\cite{GOR15} (with only technical modifications). We will exploit
this direction in future work.

Our own motivation for the topic comes from studying the density of frames for
representations of nilpotent groups. It has been known for a
long time that frames always exist in the orbit of every square-integrable
irreducible unitary representation of a  locally compact group \cite{fegr89, gr91}.  In analogy to the sampling and
interpolation theory for bandlimited functions one would like to
understand how dense such a subset must necessarily be. Until
recently, all proofs of density theorems required the existence of an
\onb\ and the comparison of the frame to this \onb . See~\cite{bacahela06}
for a very general density theory of frames. For frames in the orbits
of nilpotent groups precise results have been derived by
H\"ofler~\cite{Hoe14}, but again under the hypothesis that there exists an
\onb\ in the orbit of the representation. With 
 Theorem $0$ and its variations,  H\"ofler's results about the
density of frames now apply to a general class of nilpotent groups. 
Let us mention that these density results have been generalized
recently to arbitrary unimodular groups with square-integrable
representations, but \emph{without} requiring the existence of an
orthonormal basis in an orbit~\cite{FGHKR}. 

Outlook: Our investigation raises several new questions in the theory
of nilpotent Lie groups. It is certainly tempting to believe that
every square-integrable irreducible unitary representation modulo the
center possesses an \onb\ in some orbit. However, at this time we are
far from  a complete answer.  We do not need a rational structure, as one might
expect from examples and Bekka's work~\cite{bekka04}, but we need that
the representation is induced from a normal polarizing subgroup and we
need special Malcev bases  that  are compatible with the
parametrization of the  coadjoint action (we call them
Chevalley-Rosenlicht admissible).  
The normality of a polarizing subgroup may not be necessary, as is
shown by an explicit construction of F\"uhr~\cite{fuehr16} of an \onb\ for an $11$-dimensional
nilpotent group with square-integrable representations that do not
possess a normal polarizing subgroup.  As for the existence of
Chevalley-Rosenlicht admissible
Malcev bases,  we do not  know for which groups they exist.

The paper is organized as follows: Section~2 is a warm-up and presents
the example of the Heisenberg group and the Gabor basis. 
 In Section~3 we give a quick review
of Malcev bases of nilpotent groups and of graded nilpotent Lie algebras; we set the notation for induced
representations, and review the standard
construction of irreducible representations of nilpotent Lie
groups. Section~4 contains  several  formulations of
Theorem $0$ and builds up the arguments  towards its proof. In
particular, we prove the existence of an orthonormal basis in the orbit
of a square-integrable representation for  all graded nilpotent Lie
groups with one-dimensional center. 
Section~5 offers concrete examples and treats  the 
meta-Heisenberg group in detail.

\textbf{Acknowledgements.} We would like to thank Hartmut F\"uhr and
Yuri Neretin for valuable discussions. K.\ G.\ would like to thank the 
Hausdorff Research Institute for Mathematics, Bonn, for the perfect
conditions during the work on this paper.

\section{Warm-Up}

In this section we will  review the example of the Gabor basis in $L^2(\rd
)$ and the Heisenberg group,   as the  proof of our main result is
modeled  on the arguments of this example.

\subsection{Gabor Bases and the Heisenberg Group}

For $\l \neq 0$, let $\pi_\l $  be the operator
	\begin{align} \label{eq:c11}
		(\pi_\l(z, y, x) f)(t) = e^{2 \pi i \l z} e^{2 \pi i \l yt} f(t+x),
	\end{align}
acting on  $f \in \lrd $.

The following statement is well-known in time-frequency analysis and
yields an orthonormal basis for $\lrd $ in the orbit of a single
function. 
(See \cite[\S~6.4]{gr01} and  \cite[\S~11.2]{he11}.)

	\begin{ex} \label{MainThmEuclVersion}
The family of time-frequency shifted functions
	\begin{align*}
		\Abs{\l}^{-d/2} \pi_\l(0,\vartheta, \gamma) 1_{[0,
                  \l^{-1})^d} = \Bigl( t \mapsto \Abs{\l}^{d/2} e^{2
                  \pi i \l \vartheta t} \hspace{2pt} 1_{[0,
                  \l^{-1})^d}(t +\eta) \Bigr), 
	\end{align*}
for $(\vartheta, \eta) \in  \Z^d \times \l^{-1} \Z^d$, forms an orthonormal basis for $\L{2}{\R^d}$.
	\end{ex}

	\begin{proof}
We partition $\R^d$ into the cubes $\Omega_\eta
:= [0, \l^{-1})^d + \eta$ with $\eta \in \l^{-1} \Z^d$. Then for arbitrary but
fixed $\eta \in \l^{-1} \Z^d$ the set of functions $ \{|\lambda
|^{d/2} e^{2 \pi i \l \vartheta t} \mid \vartheta \in \Z^d\}$ forms an
orthonormal basis for $\L{2}{[0, \l^{-1})^d + \eta}$ (the standard
Fourier basis). Including shifts, 
we find that
	\begin{align*}
		\Bigl \langle \pi_\l(0,\vartheta, \eta) 1_{[0, \l^{-1})^d}, \pi_\l(0,\vartheta', \eta') &1_{[0, \l^{-1})^d} \Bigr \rangle_{\L{2}{\R^d}} & \\
		&= \int_{\R^d} e^{2 \pi i \l \vartheta t} \hspace{2pt} 1_{[0, \l^{-1})^d}(t + \eta) \hspace{2pt} e^{-2 \pi i \l \vartheta' t} \hspace{2pt} 1_{[0, \l^{-1})^d}(t + \eta') \,dt \\
		&= \delta_{\eta, \eta'} \int_{\R^d} e^{2 \pi i \l (\vartheta - \vartheta') t} \hspace{2pt} 1_{[0, \l^{-1})^d}(t + \eta) \,dt \\
		&= \delta_{\eta, \eta'} \int_{[0, \l^{-1})^d} e^{2 \pi i \l (\vartheta - \vartheta') (t - \eta)} \,dt \\
		&= \delta_{\eta, \eta'} \hspace{2pt} e^{-2 \pi i \l (\vartheta - \vartheta') \eta} \int_{[0, \l^{-1})^d}
		e^{2 \pi i \l (\vartheta - \vartheta') t} \,dt \\
		&= \delta_{\vartheta, \vartheta'} \delta_{\eta, \eta'} \hspace{2pt} \hspace{2pt} \Abs{\l}^{-d}
	\end{align*}
holds true for all $(\vartheta, \eta), (\vartheta', \eta') \in \Z^d \times \l^{-1} \Z^d$. Since the completeness is
clear, the set  $\{\Abs{\l}^{-d/2} \pi_\l(0,\vartheta, \eta)
1_{[0,\l^{-1})^d} | (\vartheta,   \eta) \in \zd \times \lambda \inv \zd \}$ forms an orthonormal basis for $\lrd$. 
	\end{proof}

A similar construction yields \onbs\  for $L^2(A)$ of an arbitary
locally compact Abelian group $A$~\cite{grst07}.  

In the language of abstract harmonic analysis, $\pi _\l $ is a
unitary representation of 
the  $2d+1$-dimensional
Heisenberg group   $\H = \R \times \rd \times \rd $ with center $Z(\H ) = \R \times \{0\}
\times \{0\}$ and multiplication 
$$
	(z, y, x) (z', y', x') = (z+z' + x y', y+y',x+x) 
$$
for $z,z' \in \R, x,x', y,y'\in \rd $. 
The representation $\pi _\l $ for $\l \neq 0$  is 
the Schr\"{o}dinger representation and is square-integrable modulo
center, in short $\pi _\l \in SI/Z(\H)$.

We make the following observations:

(a) The variable $z$ in~\eqref{eq:c11} (the center of $\H $) does not play
any role in the definition of the \onb . In general, the construction
of an \onb\ in the orbit of a representation depends only on the
action of $G/Z(G)$ and not on the full representation.

(b) The set
$\Gamma := \l \inv \Z \times \l \inv  \zd  \times   \zd
\subset \H $ used in the example  
forms a uniform subgroup of $\H $, i.e., a discrete co-compact
subgroup of $\H$. The quotient subgroup $\Gamma/Z(\H) $ is isomorphic
to the lattice $ \Z^d \times \l^{-1}  \Z^d$. Furthermore, the set 
$[0,\l^{-1})^d$ is a fundamental domain for the lattice $\l^{-1} \zd $ in $\rd$. 

(c) The Heisenberg group is a semidirect product $\PID
\rtimes \QA$ of the normal subgroup $\PID = \{(z,y,0) \mid z \in \R, y
\in \rd\}$ and the subgroup $\QA = \{ (0,0,x) \mid x \in \rd \}$ (in the
jargon: of a polarizing normal subgroup $\PID$ and the
subgroup $\QA \cong \PID \rquo \H$). The variables in $\PID$ act by
modulation, whereas the variables in $\QA$ act by translation.

To generalize this elementary construction of \onbs\ on $\H $ to other nilpotent
groups, we need to find an appropriate coordinate system and
representations of $G$ in which
the variables split  into some form of modulation and translation
operators.

\section{Some Tools from the Theory of Nilpotent Lie Groups} \label{SectionTools}

Throughout the paper we will make heavy use of  the theory of
nilpotent Lie groups and their irreducible unitary representations. Our 
main reference is  the outstanding monograph of Corwin and Greenleaf
\cite{CorwinGreenleaf}.  We briefly recall some well-known
facts about strong Malcev bases, rational structures, induced representations, square-integrable representations and graded groups.
For more details confer Corwin and
Greenleaf~\cite[Ch.~1]{CorwinGreenleaf} as well as Fischer and Ruzhansky~\cite[\S~3.1.1]{FiRuz}.
To make the paper accessible also to applied harmonic analysts, we
present these notions in more detail than is perhaps necessary for
specialists. 

In the following $G$ is always a connected, simply connected nilpotent Lie group with Lie algebra
$\Lie{g}$. The vector space dual of $\Lie{g}$ is denoted by
$\Lie{g}^*$, the Haar measure of $G$ is $\mu _G$.

\subsection{Malcev Bases and Coordinates} On a technical level the
choice of the appropriate coordinate system for $G$ will be
essential. We will construct a particular set of strong Malcev
coordinates~\cite[Thm.~1.1.13]{CorwinGreenleaf}.

	\begin{lem} \label{StrongMB}
Let $\Lie{g}$ be a nilpotent Lie algebra of dimension $\dimG$ and let
$\Lie{g}_1 \subgr \ldots \subgr  \Lie{g}_l \subgr \Lie{g}$ be ideals with
$\dim(\Lie{g}_j) = m_j$. Then there exists a basis $\{ X_1, \ldots,
X_\dimG \}$ such that 
	\begin{itemize}
		\item[(i)] for each $1 \leq m \leq \dimG$, $\Lie{h}_m := \Rspan{X_1, \ldots, X_m}$ is an ideal of $\Lie{g}$;
		\item[(ii)] for $1 \leq j \leq l$, $\Lie{h}_{m_j} = \Lie{g}_j$.
	\end{itemize}
A basis satisfying (i) and (ii)  is called a strong Malcev basis
of $\Lie{g}$ passing through the ideals $\Lie{g}_1, \ldots, \Lie{g}_l $. 
	\end{lem}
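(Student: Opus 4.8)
The plan is to deduce the lemma from one structural fact about nilpotent Lie algebras, namely that \emph{every chain of ideals refines to a complete flag of ideals}: there is a chain $0 = \Lie{b}_0 \subsetneq \Lie{b}_1 \subsetneq \cdots \subsetneq \Lie{b}_\dimG = \Lie{g}$ of ideals of $\Lie{g}$ with $\dim \Lie{b}_j = j$. Granting this, I would first delete repetitions from $\Lie{g}_1 \subgr \cdots \subgr \Lie{g}_l$ and, if necessary, adjoin $\{0\}$ and $\Lie{g}$ to obtain a strictly increasing chain of ideals; then refine it to a complete flag $(\Lie{b}_j)_{j=0}^{\dimG}$ that still passes through every $\Lie{g}_j$, so that $\Lie{g}_j = \Lie{b}_{m_j}$; and finally choose any $X_j \in \Lie{b}_j \setminus \Lie{b}_{j-1}$ for $1 \leq j \leq \dimG$.

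Next I would carry out the refinement step, which is where the nilpotency of $\Lie{g}$ enters. Given ideals $\Lie{a} \subsetneq \Lie{b}$ of $\Lie{g}$ with $\dim\Lie{b} - \dim\Lie{a} \geq 2$, I want an ideal strictly between them. The quotient vector space $\Lie{b}/\Lie{a}$ carries the adjoint action $X \cdot (v + \Lie{a}) := [X, v] + \Lie{a}$ of $\Lie{g}$, well defined because $\Lie{a}$ and $\Lie{b}$ are ideals, and each $\ad X$ acts nilpotently on it since it does so on $\Lie{g}$. By Engel's theorem there is then a nonzero class $v + \Lie{a}$ with $[\Lie{g}, v] \subseteq \Lie{a}$, so that $\Lie{a} + \Rspan{v}$ is an ideal of dimension $\dim\Lie{a} + 1$ strictly between $\Lie{a}$ and $\Lie{b}$; iterating closes every gap of size $\geq 2$. (Alternatively the whole lemma can be run as an induction on $\dim\Lie{g}$: a nonzero ideal of a nilpotent Lie algebra meets the center $\Liez{g}$, so one may pick a one-dimensional central ideal $\Rspan{X_1}$ inside the smallest nonzero $\Lie{g}_j$, pass to $\Lie{g}/\Rspan{X_1}$, apply the inductive hypothesis to the images of the $\Lie{g}_j$, and lift a strong Malcev basis back, using that preimages of ideals under the quotient map are ideals; the $\Lie{g}_j$ with $m_j = 0$ impose nothing.)

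Finally I would check that the $X_j$ picked from the complete flag have the required properties. For each $m$ one has $\Rspan{X_1, \ldots, X_m} \subseteq \Lie{b}_m$, since $X_i \in \Lie{b}_i \subseteq \Lie{b}_m$ whenever $i \leq m$, and an easy induction on $m$ shows $X_m \notin \Rspan{X_1, \ldots, X_{m-1}}$ because the latter sits inside $\Lie{b}_{m-1}$, which does not contain $X_m$. Hence $X_1, \ldots, X_m$ are linearly independent and $\Lie{h}_m := \Rspan{X_1, \ldots, X_m}$ has dimension $m = \dim\Lie{b}_m$, forcing $\Lie{h}_m = \Lie{b}_m$. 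This gives (i), the choice $\Lie{g}_j = \Lie{b}_{m_j}$ gives (ii), and taking $m = \dimG$ shows $\{X_1, \ldots, X_\dimG\}$ is a basis of $\Lie{g}$. The only genuine obstacle is the production of the invariant vector $v + \Lie{a}$, i.e.\ Engel's theorem for the nilpotent $\Lie{g}$-action on $\Lie{b}/\Lie{a}$ (equivalently, the fact that a nonzero ideal of a nilpotent Lie algebra meets the center); everything else is bookkeeping. This is essentially the argument of Corwin and Greenleaf~\cite[Thm.~1.1.13]{CorwinGreenleaf}, whose treatment we follow for the remaining details.
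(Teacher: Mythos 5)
Your argument is correct: the refinement step via Engel's theorem (a nonzero $\Lie g$-invariant vector in $\Lie b/\Lie a$ yields an intermediate ideal of one dimension more), followed by the dimension count for the chosen $X_j$, is exactly the standard proof. The paper does not prove this lemma at all but simply quotes it from Corwin--Greenleaf \cite[Thm.~1.1.13]{CorwinGreenleaf}, and your write-up is a faithful, self-contained version of that cited argument, so there is nothing to add.
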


 If we do not list any particular ideals, we will simply refer to the
 basis as a  strong Malcev basis.

	\begin{dfn}
Given a strong Malcev basis $\{ X_1, \ldots, X_\dimG \}$, the strong
Malcev coordinates of $G$ are defined by the map
	\begin{align}
		&\phi: \R^\dimG \to G, \nn \\
		&\phi(t_1, \ldots, t_\dimG) := \exp(t_1 X_1) \ldots \exp(t_\dimG X_\dimG) = \exp(t_1 X_1 * \ldots * t_\dimG X_\dimG). \label{DefStrMalcevCoord}
	\end{align}
	\end{dfn}

The Haar measure $\mu _G$ of a set $E\subseteq G$ is given by $\mu _G(E) = m(\phi \inv
(E))$, where $m$ is the Lebesgue measure on $\R^\dimG$. In Section~4 we
will fix a  Malcev basis and the corresponding Haar
measure. To determine the precise value of constants, we will
therefore keep track of all basis changes. 

The following two properties of strong Malcev coordinates~\cite[Prop.~1.2.7]{CorwinGreenleaf} will be used
later.

	\begin{lem} \label{LemPropStrongMC}
Let $\{ X_1, \ldots, X_\dimG \}$ be a strong Malcev basis for
$G$. Then there exist polynomial functions $P_{X_1}, \ldots, P_{X_\dimG}:
\R^{2 \dimG} \to \R$ of degree $\leq 2 \dimG$ such that for $t = (t_1,
\ldots, t_\dimG), s = (s_1, \ldots, s_\dimG) \in \R^\dimG$ the group
multiplication is given by 
	\begin{align}
		\phi(t) \phi(s) = \phi(P_{X_1}(t,s), \ldots, P_{X_\dimG}(t,s))\, . \label{PolyStrongMalcevMult}
	\end{align}
The polynomials have the following properties:
	\begin{itemize}
		\item[(i)] $P_{X_\dimG}(t, s) = t_\dimG + s_\dimG$ and $P_{X_{\dimG-1}}(t, s) = t_{\dimG-1} + s_{\dimG-1}$.
		\item[(ii)] If $1 \leq j \leq \dimG-2$, then $P_{X_j}(t, s) = t_j + s_j + \tilde{P}_{X_j}(t_{j+1}, \ldots, t_\dimG, s_{j+1}, \ldots, s_\dimG)$, where $2 \leq \deg(\tilde{P}_{X_j}) \leq (\dimG-j)^2$.
                \item[(iii)] $P_{X_j}(t, -t) = 0$ for $j=1, \dots, \dimG$. 
	\end{itemize}
\end{lem}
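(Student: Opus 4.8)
The plan is to prove Lemma~\ref{LemPropStrongMC} by combining the Baker--Campbell--Hausdorff (BCH) formula with an induction on the descending chain of ideals $\h_j = \Rspan{X_1,\dots,X_j}$. First I would recall that, because $\{X_1,\dots,X_\dimG\}$ is a strong Malcev basis, each $\h_j$ is an ideal, so $[\Lie{g},\h_j]\subseteq\h_{j-1}$; in particular $X_{\dimG-1}$ and $X_\dimG$ are central modulo $\h_{\dimG-2}$. Writing $\phi(t)=\exp(\sum_{i} t_i X_i \ast \text{(lower order corrections)})$ via~\eqref{DefStrMalcevCoord} and applying BCH, the product $\phi(t)\phi(s)$ is $\exp$ of a Lie-polynomial in the $X_i$ whose coefficients are polynomials in $t,s$; re-expressing this element back in the coordinate map $\phi$ via another BCH computation produces the polynomials $P_{X_j}$. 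The fact that $\Lie{g}$ is nilpotent of step $\leq\dimG$ bounds the number of nested brackets, which gives the degree bound $\leq 2\dimG$ after one checks that each bracket contributes degree at most $2$ in $(t,s)$ jointly; this is the routine bookkeeping I would not grind through.

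For the structural properties, the key observation is that the coordinate of index $j$ in $\phi(t)\phi(s)$ ``sees'' only the factors $\exp(t_iX_i)$ and $\exp(s_iX_i)$ with $i\geq j$ together with brackets landing in $\h_j\setminus\h_{j-1}$, i.e.\ brackets of the tail variables. More precisely, I would argue that when we push the exponentials of $X_1,\dots,X_{j-1}$ to the left to recover the form~\eqref{DefStrMalcevCoord}, they do not affect coordinates $1,\dots,j-1$ at all in the leading linear part, so $P_{X_j}(t,s)=t_j+s_j+\tilde P_{X_j}$ where $\tilde P_{X_j}$ depends only on $t_{j+1},\dots,t_\dimG,s_{j+1},\dots,s_\dimG$. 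Statement (i) then follows because $X_{\dimG-1},X_\dimG$ span an abelian ideal on which the correction $\tilde P$ vanishes (there are no higher-index variables to bracket), giving $P_{X_\dimG}=t_\dimG+s_\dimG$ and $P_{X_{\dimG-1}}=t_{\dimG-1}+s_{\dimG-1}$. For (ii) the lower bound $\deg\tilde P_{X_j}\geq 2$ holds because $\tilde P_{X_j}$ collects only genuine bracket terms (at least one commutator, hence at least bidegree $2$), and the upper bound $(\dimG-j)^2$ comes from counting that brackets contributing to coordinate $j$ involve only the $\dimG-j$ variables $X_{j+1},\dots,X_\dimG$, with nesting depth at most $\dimG-j$, each level multiplying the degree.

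Property (iii), $P_{X_j}(t,-t)=0$, I would deduce most cleanly not from the polynomial formulas but from the group structure: $\phi(t)\phi(-t)$ should equal the identity $e=\phi(0,\dots,0)$, since $\phi(-t)=\phi(t)^{-1}$. To see that $\phi(-t)=\phi(t)^{-1}$ for strong Malcev coordinates, note that by~\eqref{DefStrMalcevCoord} and the BCH/one-parameter-subgroup property, $\phi(t)^{-1}=\exp(-t_\dimG X_\dimG)\cdots\exp(-t_1 X_1)$, and one shows by induction up the ideal chain (using centrality of $X_i$ modulo $\h_{i-1}$ to commute exponentials at the cost of terms in lower ideals, which are then absorbed) that this reversed product can be rewritten as $\exp(-t_1X_1)\cdots\exp(-t_\dimG X_\dimG)=\phi(-t)$. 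Since $\phi$ is a bijection $\R^\dimG\to G$, equating coordinates in $\phi(t)\phi(-t)=e$ forces $P_{X_j}(t,-t)=0$ for all $j$.

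The main obstacle will be making the degree bookkeeping in the BCH expansion fully rigorous --- in particular, verifying simultaneously that each $P_{X_j}$ has total degree $\leq 2\dimG$ and that the bracket-generated part $\tilde P_{X_j}$ has degree in the prescribed range $[2,(\dimG-j)^2]$ --- while keeping the dependence on only the higher-index variables transparent. The cleanest route is an induction on $\dimG-j$ (equivalently, working with the quotients $\Lie{g}/\h_{j}$, which are again nilpotent Lie algebras with strong Malcev bases given by the images of $X_{j+1},\dots,X_\dimG$), so that the statement for coordinate $j$ in $\Lie{g}$ reduces to the top-coordinate statement for $\Lie{g}/\h_{j-1}$; this is essentially the argument in~\cite[Prop.~1.2.7]{CorwinGreenleaf}, which I would cite for the technical estimates rather than reproduce in full.
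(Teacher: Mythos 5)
Your treatment of the existence of the $P_{X_j}$ and of properties (i) and (ii) is the standard argument and is consistent with the paper, which in fact offers no proof at all but simply refers to \cite[Prop.~1.2.7]{CorwinGreenleaf}; deferring the degree bookkeeping to that reference is exactly what the authors do. One small slip: $X_{\dimG-1}$ and $X_\dimG$ do not span an abelian ideal --- they span a complement of the ideal $\Lie{h}_{\dimG-2}$. The correct justification for (i) is that the quotient $\Lie{g}/\Lie{h}_{\dimG-2}$ is a two-dimensional, hence abelian, nilpotent Lie algebra (equivalently, as you also say, there are no higher-index variables left to bracket), so the top two coordinates add.

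Your argument for (iii), however, has a genuine gap: the identity $\phi(-t)=\phi(t)\inv$ is false in general. The inverse is $\phi(t)\inv=\exp(-t_\dimG X_\dimG)\cdots\exp(-t_1X_1)$, with the factors in \emph{reversed} order, and restoring the order prescribed by \eqref{DefStrMalcevCoord} produces commutator corrections that genuinely change the lower coordinates rather than being ``absorbed''. Concretely, take the Heisenberg algebra with strong Malcev basis $X_1=Z$, $X_2=Y$, $X_3=X$, $[X_3,X_2]=X_1$ central. Then the central factors cancel and
\begin{align*}
\phi(t)\,\phi(-t)=\exp(t_2X_2)\exp(t_3X_3)\exp(-t_2X_2)\exp(-t_3X_3)=\exp(-t_2t_3X_1)\neq e,
\end{align*}
so $P_{X_1}(t,-t)=-t_2t_3$. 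This shows not only that your derivation of (iii) fails, but that assertion (iii) cannot hold as stated for products taken in the same order on both sides. The telescoping identity you have in mind is the one for mixed-order products as in Lemma~\ref{PolyQL}, where $\exp(t_\dimG X_\dimG)\cdots\exp(t_1X_1)\cdot\exp(-t_1X_1)\cdots\exp(-t_\dimG X_\dimG)=e$ holds trivially by cancellation; that is the version one should state and use, and your proof of the remaining parts is unaffected by this issue.
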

Given a strong Malcev basis, there exist polynomial functions $R_{X_1}, \ldots, R_{X_\dimG}: \R^\dimG \to \R$ such that
\begin{equation}
  \label{eq:p1}
		\phi(t) = \exp\Bigl(\sum_{j=1}^\dimG R_{X_j}(t) X_j \Bigr)  
\end{equation}
with similar properties as above:
	\begin{itemize}
		\item[(i)] $R_{X_\dimG}(t) = t_\dimG$ and $R_{X_{\dimG-1}}(t) = t_{\dimG-1}$,
		\item[(ii)] $R_{X_j}(t) = t_j+ \tilde{R}_{X_j}(t_{i+1}, \ldots, t_\dimG)$, where $2 \leq \deg(\tilde{R}_j) \leq \dimG$.
	\end{itemize}
The $R_{X_j}$ are the exponential coordinates of $g= \phi (t)$. 

For detailed proofs see \cite[Prop.~1.2.7]{CorwinGreenleaf}.

\subsection{Rational Structures, Uniform Subgroups,  and Quasi-Lattices}

Strong Malcev bases allow for the construction of discrete subsets of nilpotent groups which provide a convenient disjoint partition of $G$. Under certain conditions on the Lie bracket these discrete subsets are even subgroups. We recall the following definitions and statements from \cite[\S~ 5.1]{CorwinGreenleaf}.

	\begin{dfn}
Let $\Lie{g}$ be an $\dimG$-dimensional Lie algebra. We say $\Lie{g}$ has a rational structure if there is an $\R$-basis $\{ X_1, \ldots, X_\dimG \}$ for $\Lie{g}$ with rational structure constants. In particular, $\Lie{g} = \Lie{g}_\Q \otimes \R$ for $\Lie{g}_\Q := \Qspan{X_1, \ldots, X_\dimG}$.
	\end{dfn}

	\begin{dfn}
Let $G$ be a nilpotent Lie group and $\Gamma$ a discrete subgroup. We say that $\Gamma$ is a uniform subgroup if one of the following equivalent conditions holds:
	\begin{itemize}
		\item[(i)] $G/\Gamma$ is compact. In this case there
                  exists a relatively compact fundamental domain
                  $\Sigma  \subset G$ such that $\Sigma \Gamma = G$.
		\item[(ii)] There exists a strong Malcev basis $\{ X_1, \ldots, X_\dimG \}$ of $\Lie{g}$ such that
	\begin{align*}
		\Gamma = \exp(\Z X_1) \cdots \exp(\Z X_\dimG). 
	\end{align*}
	\end{itemize}
In this case we say $\{ X_1, \ldots, X_\dimG \}$ is strongly based on $\Gamma$.
	\end{dfn}

	\begin{thm}
Let $G$ be nilpotent with Lie algebra $\Lie{g}$.
	\begin{itemize}
		\item[(i)] If $G$ has a uniform subgroup $\Gamma$, then $\Lie{g}$ has a rational structure such that $\Lie{g}_\Q = \Qspan{\log(\Gamma)}$.
		\item[(ii)] Conversely, if $\Lie{g}$ has a rational structure from a $\Q$-algebra $\Lie{g}_\Q \subgr \Lie{g}$, then $G$ has a uniform subgroup $\Gamma$ such that $\log(\Gamma) \subseteq \Lie{g}_\Q$.
	\end{itemize}
	\end{thm}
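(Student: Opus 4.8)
The plan is to follow Malcev's classical argument: the two statements are essentially inverse constructions, and both rest on the Baker--Campbell--Hausdorff (BCH) formula --- whose coefficients are rational and whose series terminates in a nilpotent Lie algebra --- together with the fact that an initial segment $\Rspan{X_1,\ldots,X_m}$ of a strong Malcev basis is an ideal, so that $[X_i,X_j]\in\Rspan{X_1,\ldots,X_m}$ whenever $\min(i,j)\le m$. Throughout I write $r$ for the nilpotency step.

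For~(i), fix by condition~(ii) of the definition a strong Malcev basis $\{X_1,\ldots,X_\dimG\}$ strongly based on $\Gamma$, so $\Gamma=\phi(\Z^\dimG)$ with $\phi$ as in~\eqref{DefStrMalcevCoord} and each $\Lie{h}_m=\Rspan{X_1,\ldots,X_m}$ an ideal. I would induct on $\dimG=\dim\Lie{g}$, the claim being that the structure constants of $\Lie{g}$ in this basis are rational; granting this, $\Lie{g}_\Q:=\Qspan{X_1,\ldots,X_\dimG}$ is a $\Q$-subalgebra, $\log(\Gamma)\subseteq\Lie{g}_\Q$ by BCH, and $X_j=\log(\exp X_j)\in\log(\Gamma)$, whence $\Lie{g}_\Q=\Qspan{\log(\Gamma)}$. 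For the inductive step put $H_{\dimG-1}:=\exp(\Lie{h}_{\dimG-1})$, a normal subgroup of dimension $\dimG-1$; one checks in the standard way --- using that $\Gamma$ is a subgroup (so the multiplication polynomials $P_{X_j}$ of Lemma~\ref{LemPropStrongMC} send $\Z^\dimG\times\Z^\dimG$ into $\Z^\dimG$), the triangular form of those polynomials, that $\phi$ is injective with $H_{\dimG-1}=\phi(\R^{\dimG-1}\times\{0\})$, and property~(iii) of Lemma~\ref{LemPropStrongMC} (so that $\phi(-k)=\phi(k)\inv$) --- that $\Gamma_{\dimG-1}:=\Gamma\cap H_{\dimG-1}=\exp(\Z X_1)\cdots\exp(\Z X_{\dimG-1})$ is a uniform subgroup of $H_{\dimG-1}$ on which $\{X_1,\ldots,X_{\dimG-1}\}$ is strongly based. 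By the inductive hypothesis $\Lie{h}_{\dimG-1}$ has rational structure constants, so applying BCH to integer combinations of $X_1,\ldots,X_{\dimG-1}$ shows that $\log(\Gamma_{\dimG-1})$ lies in a lattice $L\subset\Lie{h}_{\dimG-1}$. The only structure constants not yet controlled are the coordinates of the brackets $[X_\dimG,X_i]$ for $i<\dimG$, all of which lie in $\Lie{h}_{\dimG-1}$; and for every $s\in\Z$ one has $\exp(sX_\dimG)\exp(X_i)\exp(-sX_\dimG)=\exp\!\bigl(e^{s\,\ad X_\dimG}X_i\bigr)\in\Gamma\cap H_{\dimG-1}=\Gamma_{\dimG-1}$, so $s\mapsto e^{s\,\ad X_\dimG}X_i$ is a polynomial curve in $\Lie{h}_{\dimG-1}$ of degree $<r$ that maps $\Z$ into the lattice $L$. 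Applying the finite-difference operator repeatedly --- its $d$-fold iterate collapses a degree-$d$ polynomial to a constant multiple of its top coefficient --- one peels off the coefficients successively and finds that all of them, in particular the linear coefficient $[X_\dimG,X_i]$, have rational coordinates. This closes the induction.

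For~(ii), begin with the descending central series $\Lie{g}=\Lie{g}^{(1)}\supseteq\cdots\supseteq\Lie{g}^{(r+1)}=0$, which is defined over $\Q$ since $[\Lie{g}_\Q,\Lie{g}_\Q]\subseteq\Lie{g}_\Q$, and observe that each quotient $\Lie{g}^{(i)}/\Lie{g}^{(i+1)}$ is central in $\Lie{g}/\Lie{g}^{(i+1)}$, so every $\Q$-subspace between two consecutive terms lifts to a rational ideal. Refining the rational chain $\{\Lie{g}^{(i)}\}$ to a full flag of rational ideals and reading off a compatible $\Q$-basis yields, as in Lemma~\ref{StrongMB}, a strong Malcev basis $\{Y_1,\ldots,Y_\dimG\}$ with all $Y_j\in\Lie{g}_\Q$, hence with rational structure constants, so the multiplication polynomials $P_{Y_j}$ of Lemma~\ref{LemPropStrongMC} have rational coefficients. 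Rescaling $X_j:=NY_j$ with $N$ a sufficiently divisible positive integer clears the denominators so that the $P_{X_j}$ take integer values on $\Z^\dimG\times\Z^\dimG$; then $\{X_1,\ldots,X_\dimG\}$ is again a strong Malcev basis (rescaling basis vectors leaves the subspaces $\Lie{h}_m$ unchanged), and $\Gamma:=\exp(\Z X_1)\cdots\exp(\Z X_\dimG)=\phi(\Z^\dimG)$ is closed under multiplication, closed under inversion by property~(iii) of Lemma~\ref{LemPropStrongMC}, discrete because $\phi$ is a diffeomorphism, and co-compact because the $P_{X_j}$ are triangular; thus $\Gamma$ is a uniform subgroup. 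Finally $\log(\Gamma)\subseteq\Lie{g}_\Q$, since $\log(\exp(k_1X_1)\cdots\exp(k_\dimG X_\dimG))$ is a BCH combination of the vectors $k_jX_j\in\Lie{g}_\Q$.

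The one genuinely delicate point is the rationality extraction in~(i): deducing that $[X_\dimG,X_i]$ has rational coordinates from the membership $\exp(sX_\dimG)\exp(X_i)\exp(-sX_\dimG)\in\Gamma_{\dimG-1}$ for all integers $s$ works only because $\log(\Gamma_{\dimG-1})$ sits in an honest lattice, so that one is studying the integer points of a polynomial curve valued in a lattice; securing that denominator bound --- obtained here for free from the inductive hypothesis via BCH on integer combinations --- is the heart of the matter, while everything else is bookkeeping with BCH and strong Malcev coordinates. In~(ii) the corresponding care reduces to the explicit, finite divisibility condition on $N$.
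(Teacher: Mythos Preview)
The paper does not supply its own proof of this theorem: it is quoted without argument from \cite[\S 5.1]{CorwinGreenleaf} as part of the background material in Section~\ref{SectionTools}. Your sketch is a faithful rendering of Malcev's classical proof and is essentially correct; in particular, the ``delicate point'' you flag in~(i) --- extracting the rationality of $[X_\dimG,X_i]$ by finite differences applied to the lattice-valued polynomial $s\mapsto e^{s\,\ad X_\dimG}X_i$ --- is exactly the crux of Malcev's argument, and your justification that $\log(\Gamma_{\dimG-1})$ lies in a genuine lattice (via the bounded denominators in the polynomials $R_{X_j}$ of~\eqref{eq:p1}) is sound. One small remark on~(ii): your uniform rescaling $X_j:=NY_j$ does work, since a degree-$d$ monomial in $P_{Y_j}$ picks up a factor $N^{d-1}$ after the change of basis and $d\ge 2$ for all nonlinear terms; the paper itself later uses a non-uniform rescaling (Proposition~\ref{Subgroup}(ii), equation~\eqref{satur2}) only because it needs a particular shape adapted to the Chevalley--Rosenlicht parametrization, not because uniform rescaling fails for the bare existence statement.
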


	\begin{rem}
If $G$ possesses a uniform subgroup strongly based on the given Malcev
basis $\{ X_1, \ldots, X_\dimG \}$, then the  polynomial functions from Lemma~\ref{LemPropStrongMC} have the
additional property that
\begin{center}
 (iii) the $P_{X_j}$ and $R_{X_j}$ have rational coefficients.  
\end{center}
See \cite[Thm.~5.4.2]{CorwinGreenleaf}.
	\end{rem}

A uniform $\Gamma$ as in (ii) can be obtained by rescaling any strong Malcev basis with $\Lie{g}_\Q := \Qspan{X_1, \ldots, X_\dimG}$ by a sufficiently large integer.

In the absence of a rational structure we can still work with quasi-lattices.

	\begin{dfn}
 A discrete subset $\Gamma$  of $G$ is called a 
quasi-lattice with relatively compact fundamental domain $\Sigma$ if
$G = \bigcup_{\gamma \in \Gamma} \Sigma \gamma$ is a disjoint union of
translates. 
	\end{dfn}

Quasi-lattices always exist~\cite{FGr},  and an explicit
construction can be obtained by adapting the proof of Theorem~5.3.1 in
\cite{CorwinGreenleaf}. 

	\begin{lem} \label{LemmaQL}
Let $\{ X_1, \ldots, X_\dimG \}$ be a strong Malcev basis of $\Lie{g}$. Then
	\begin{align*}
		\Gamma = \bigl\{ \exp(k_1 X_1) \cdots \exp(k_\dimG X_\dimG) \mid k_1, \ldots, k_\dimG \in \Z \bigr\}
	\end{align*}
is a quasi-lattice in $G$ with fundamental domain
	\begin{align*}
		\Sigma = \bigl\{ \exp(t_\dimG X_\dimG) \cdots \exp(t_1 X_1) \mid t_1, \ldots, t_\dimG \in [0, 1) \bigr\}.
	\end{align*}
	\end{lem}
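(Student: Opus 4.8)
The plan is to reduce the statement to a disjointness-plus-covering argument carried out in strong Malcev coordinates, using the "first coordinate" structure of the multiplication polynomials from Lemma~\ref{LemPropStrongMC}. First I would note that $\Sigma$ is precisely the image under $\phi$ of the unit cube $[0,1)^\dimG$ read in the \emph{opposite} order of exponentials; equivalently, if we set $\psi(t_\dimG,\dots,t_1):=\exp(t_\dimG X_\dimG)\cdots\exp(t_1 X_1)$, then $\Sigma=\psi([0,1)^\dimG)$, and by the change-of-variables properties of strong Malcev coordinates this set is relatively compact with $\mu_G(\Sigma)=1$. The key claim to establish is that the map $\Sigma\times\Gamma\to G$, $(\sigma,\gamma)\mapsto\sigma\gamma$, is a bijection, i.e.\ every $g\in G$ has a unique factorization $g=\sigma\gamma$ with $\sigma\in\Sigma$, $\gamma\in\Gamma$.

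The main step is to run an induction on the dimension, peeling off one coordinate at a time from the top. Write $g=\phi(s_1,\dots,s_\dimG)$. For the last coordinate, Lemma~\ref{LemPropStrongMC}(i) gives $P_{X_\dimG}(t,s)=t_\dimG+s_\dimG$, so choosing $k_\dimG=\lfloor s_\dimG\rfloor$ and absorbing $\exp(k_\dimG X_\dimG)$ on the right forces the last coordinate of the remaining element into $[0,1)$; because $\Lie{h}_{\dimG-1}=\Rspan{X_1,\dots,X_{\dimG-1}}$ is an ideal, the quotient $G/\exp(\Lie{h}_{\dimG-1})$ is one-dimensional and this reduction is forced and unique. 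One then passes to the ideal $\exp(\Lie{h}_{\dimG-1})$, which is itself a simply connected nilpotent group with strong Malcev basis $\{X_1,\dots,X_{\dimG-1}\}$, and repeats. Uniqueness at each stage follows from the fact that the $j$-th coordinate of a product depends only on coordinates with index $\ge j$ (Lemma~\ref{LemPropStrongMC}(ii)) together with property (iii), $P_{X_j}(t,-t)=0$, which guarantees that $\phi(t)\inv$ again has integer coordinates when $t$ does, so that $\Gamma$ is genuinely closed under the relevant operations along the induction. Assembling the per-coordinate choices yields $\gamma=\exp(k_1X_1)\cdots\exp(k_\dimG X_\dimG)\in\Gamma$ and $\sigma=g\gamma\inv\in\Sigma$, and the construction shows both existence and uniqueness, hence $G=\bigsqcup_{\gamma\in\Gamma}\Sigma\gamma$.

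Finally I would check the remaining bookkeeping: that $\Gamma$ is discrete (immediate, since $\phi$ is a diffeomorphism and the coordinates of elements of $\Gamma$ computed via \eqref{eq:p1} lie in a discrete subset of $\R^\dimG$ by the polynomial-with-the-identity-leading-term structure of the $R_{X_j}$), and that $\Sigma$ is relatively compact, which holds because $\Sigma=\psi([0,1)^\dimG)$ with $\psi$ continuous. The main obstacle is the careful handling of uniqueness: one must argue that the "greedy" top-down choice of the $k_j$ is the \emph{only} choice, and this is exactly where the triangular structure of the multiplication polynomials (dependence of $P_{X_j}$ only on higher-indexed variables) is indispensable. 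This is the content of the proof of Theorem~5.3.1 in \cite{CorwinGreenleaf}, and the adaptation here is to observe that the argument nowhere uses rationality of the structure constants, only the existence of a strong Malcev basis.
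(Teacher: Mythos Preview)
Your proposal is correct and follows essentially the same approach as the paper: a top-down induction on $\dim(G)$ that peels off the highest coordinate using the fact that $G_{\dimG-1}=\exp(\Lie{h}_{\dimG-1})$ is normal, determines $t_\dimG,k_\dimG$ uniquely in the one-dimensional quotient, and then recurses inside $G_{\dimG-1}$. The paper makes the passage to $G_{\dimG-1}$ slightly more explicit via the conjugation $\exp(-t_\dimG X_\dimG)\,g_{\dimG-1}\,\exp(t_\dimG X_\dimG)\in G_{\dimG-1}$, whereas you phrase it through the triangular structure of the $P_{X_j}$ and the quotient map; these are two descriptions of the same computation, and your closing remark that this is the Corwin--Greenleaf Theorem~5.3.1 argument with rationality removed is exactly the paper's point as well.
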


	\begin{proof}
\emph{Existence:} Let us set $\gamma_j := \exp(X_j)$. We prove by induction
on $\dim(G) = \dimG$ that for every $g \in G$ there exist  $k_1, \ldots, k_\dimG \in \Z$ and some $t_1, \ldots, t_\dimG \in [0, 1)$ such that
	\begin{align*}
		g = \exp(x_1 X_1) \cdots \exp(x_\dimG X_\dimG) =  \exp(t_\dimG X_\dimG) \cdots \exp(t_1 X_1) \, \gamma^{k_1}_1 \cdots \gamma^{k_\dimG}_\dimG.
	\end{align*}
The base case $\dimG = 1$ is trivial.

Induction step $\dimG \mapsto \dimG + 1$: For general $\dimG = \dim(G)
- 1$ we denote by $\Lie{g}_\dimG$ the ideal $\Rspan{X_1, \ldots,
  X_\dimG}$ and set $G_\dimG := \exp(\Lie{g}_\dimG)$. We know that for
$g \in G$ we have
$$g = \exp(x_1 X_1) \cdots \exp(x_{\dimG + 1} X_{\dimG + 1}) = g_\dimG
\, \exp(x_{\dimG + 1} X_{\dimG + 1})$$ with $g_\dimG \in G_\dimG$. We
write $x_{\dimG + 1} = t_{\dimG + 1} + k_{\dimG + 1}$ for some
uniquely determined $t_{\dimG + 1} \in [0, 1)$ and $k_{\dimG + 1} \in
\Z$. A short computation yields 
	\begin{align*}
		g &= g_\dimG \, \exp(t_{\dimG + 1} X_{\dimG + 1}) \exp(k_{\dimG + 1} X_{\dimG + 1}) \\
		&= \exp(t_{\dimG + 1} X_{\dimG + 1}) \, \Bigl( \exp(-t_{\dimG + 1} X_{\dimG + 1}) \, g_\dimG \, \exp(t_{\dimG + 1} X_{\dimG + 1}) \Bigr) \exp(k_{\dimG + 1} X_{\dimG + 1}) \\
		&=  \exp(t_{\dimG + 1} X_{\dimG + 1}) \, g'_{\dimG } \, \exp(k_{\dimG + 1} X_{\dimG + 1})
	\end{align*}
for some $g'_{\dimG } \in G_\dimG \nsubgr G$. By applying the induction hypothesis to $g'_{\dimG }$, we then obtain
	\begin{align*}
		g = \exp(t_{\dimG + 1} X_{\dimG + 1}) \, \exp(t_\dimG X_\dimG) \cdots \exp(t_1 X_1) \, \exp(k_1 X_1) \cdots \exp(k_\dimG X_\dimG) \, \exp(k_{\dimG + 1} X_{\dimG + 1}).
	\end{align*}
\emph{Uniqueness:} Assume that
	\begin{align*}
		\exp(t_\dimG X_\dimG) \cdots \exp(t_1 X_1) \, &\exp(k_1 X_1) \cdots \exp(k_\dimG X_\dimG) \\
		&= \exp(t'_\dimG X_\dimG) \cdots \exp(t'_1 X_1) \, \exp(k'_1 X_1) \cdots \exp(k'_\dimG X_\dimG) \\
		&= \exp(t_\dimG X_\dimG) \, g_{\dimG -1} \, \exp(k_\dimG X_\dimG) \\
		&= \exp(t'_\dimG X_\dimG) \, g'_{\dimG -1}\, \exp(k'_\dimG X_\dimG)
	\end{align*}
for some $g_{\dimG - 1}, g'_{\dimG - 1} \in G_{n-1}$. Then for $y_\dimG := t'_\dimG - t_\dimG$ we have
	\begin{align*}
		g_{\dimG - 1} &= \exp(y_\dimG X_\dimG) \, g'_{\dimG - 1} \, \exp\bigl( (k'_\dimG - k_\dimG) X_\dimG \bigr) \\
		&= \exp(y_\dimG X_\dimG) \, g'_{\dimG - 1} \, \exp(-y_\dimG X_\dimG) \, \exp\bigl( (y_\dimG + k'_\dimG - k_\dimG) X_\dimG \bigr) \\
		&= g''_{\dimG - 1} \, \exp\bigl( (y_\dimG + k'_\dimG - k_\dimG) X_\dimG \bigr)
	\end{align*}
for some $g''_{\dimG - 1} \in G_{n - 1}$. So ${g''_{\dimG -
    1}}^{-1} \, g_{\dimG - 1} \in \exp(\R X_\dimG)$, but $G_{\dimG -
  1} \bigcap \exp(\R X_\dimG) = \{ e \}$. We conclude that $g''_{\dimG -
  1} = g_{\dimG - 1}$ and $t'_\dimG - t_\dimG + k'_\dimG - k_\dimG =
0$, hence $t'_\dimG = t_\dimG$ and $k'_\dimG = k_\dimG$. By repeating
this argument for $\dimG - 1, \dimG - 2, \ldots, 1$, we obtain the 
uniqueness. 
	\end{proof}

	\begin{cor} \label{QLQuoGr}
Let $\{ X_1,
\ldots, X_\dimG \}$ be a strong Malcev basis strongly based on
$\Gamma$ passing through an ideal $\Lie{n} \nsubgr \Lie{g}$, and let
 $\Gamma$ be a quasi-lattice (or a uniform subgroup) of $G$ with
 fundamental domain $\Sigma $.   Let
 $N=\exp (\Lie{n})$ and  $\pr :
G \to N \rquo G$ denote  the natural map  onto the quotient $N\rquo  G$.
Then $\pr(\Gamma)$ is a quasi-lattice (a uniform 
subgroup)  of $N \rquo G$ with fundamental domain $\pr (\Sigma )$. 
	\end{cor}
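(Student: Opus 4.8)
The plan is to reduce the statement to Lemma~\ref{LemmaQL}, applied this time to the quotient group $N \rquo G$ and to a strong Malcev basis of its Lie algebra obtained by projecting $\{ X_1, \ldots, X_\dimG \}$. Since the given basis passes through the ideal $\Lie{n}$, we may write $\Lie{n} = \Rspan{X_1, \ldots, X_k}$ with $k := \dim N$, and $\{ X_1, \ldots, X_k \}$ is then a strong Malcev basis of $\Lie{n}$, so that $N = \{ \exp(t_1 X_1) \cdots \exp(t_k X_k) \mid t_1, \ldots, t_k \in \R \}$. The first step is to invoke the standard fact (see \cite[Ch.~1]{CorwinGreenleaf}) that the images $\bar{X}_j := X_j + \Lie{n}$, for $k < j \leq \dimG$, form a strong Malcev basis of $\Lie{g} / \Lie{n}$, the Lie algebra of $N \rquo G$: the subspaces $\Rspan{\bar{X}_{k+1}, \ldots, \bar{X}_m}$ are the images of the ideals $\Rspan{X_1, \ldots, X_m}$ with $m \geq k$, hence ideals of $\Lie{g}/\Lie{n}$. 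We also recall that $\pr \colon G \to N \rquo G$ is a surjective Lie group homomorphism with $\pr(\exp X) = \exp(d\pr\, X)$ for $X \in \Lie{g}$, $d\pr \colon \Lie{g} \to \Lie{g}/\Lie{n}$ the canonical projection; in particular $\pr(\exp(t X_j)) = \exp(t \bar{X}_j)$ for $j > k$ and $\pr(\exp(t X_j)) = e$ for $j \leq k$.

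Next I would track the Malcev coordinate map through $\pr$. For $t = (t_1, \ldots, t_\dimG) \in \R^\dimG$ the first $k$ factors of $\phi(t) = \exp(t_1 X_1) \cdots \exp(t_\dimG X_\dimG)$ lie in $N$, so
\begin{align*}
	\pr\bigl( \phi(t) \bigr) = \exp(t_{k+1} \bar{X}_{k+1}) \cdots \exp(t_\dimG \bar{X}_\dimG) =: \bar{\phi}(t_{k+1}, \ldots, t_\dimG),
\end{align*}
which is precisely the strong Malcev coordinate map of $N \rquo G$ associated with $\{ \bar{X}_{k+1}, \ldots, \bar{X}_\dimG \}$. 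Applying this identity to the $\dimG$-tuples with integer entries that parametrize $\Gamma$, and to those with entries in $[0,1)$ and reversed order that parametrize $\Sigma$ (using $\bar{X}_1 = \cdots = \bar{X}_k = 0$), I obtain
\begin{align*}
	\pr(\Gamma) &= \bigl\{ \exp(j_{k+1} \bar{X}_{k+1}) \cdots \exp(j_\dimG \bar{X}_\dimG) \mid j_{k+1}, \ldots, j_\dimG \in \Z \bigr\}, \\
	\pr(\Sigma) &= \bigl\{ \exp(t_\dimG \bar{X}_\dimG) \cdots \exp(t_{k+1} \bar{X}_{k+1}) \mid t_{k+1}, \ldots, t_\dimG \in [0,1) \bigr\}.
\end{align*}

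These are exactly the quasi-lattice and the fundamental domain that Lemma~\ref{LemmaQL} produces for the group $N \rquo G$ and its strong Malcev basis $\{ \bar{X}_{k+1}, \ldots, \bar{X}_\dimG \}$, which settles the quasi-lattice case. If $\Gamma$ is a uniform subgroup, then $\pr(\Gamma)$ is a subgroup of $N \rquo G$ because $\pr$ is a homomorphism, and the displayed description of $\pr(\Gamma)$ is precisely condition~(ii) in the definition of a uniform subgroup relative to $\{ \bar{X}_{k+1}, \ldots, \bar{X}_\dimG \}$; hence $\pr(\Gamma)$ is a uniform subgroup of $N \rquo G$ strongly based on $\{ \bar{X}_{k+1}, \ldots, \bar{X}_\dimG \}$, with relatively compact fundamental domain $\pr(\Sigma)$. (Discreteness and co-compactness of $\pr(\Gamma)$ are then automatic; alternatively they follow from the fact that $\Gamma \cap N = \exp(\Z X_1) \cdots \exp(\Z X_k)$ is a uniform subgroup of $N$.)

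The only external ingredient, and the one point to handle with care, is the passage from a strong Malcev basis passing through an ideal to a strong Malcev basis of the quotient algebra; granting this, the rest is bookkeeping with $\phi$. It is worth stressing that one should \emph{not} argue that the tiling $G = \bigsqcup_{\gamma \in \Gamma} \Sigma \gamma$ ``descends'' to $N \rquo G$ in a naive way, since $\pr$ is very far from injective on both $\Sigma$ and $\Gamma$; instead the disjointness of $N \rquo G = \bigsqcup_{\bar{\gamma} \in \pr(\Gamma)} \pr(\Sigma)\, \bar{\gamma}$ is re-derived from Lemma~\ref{LemmaQL} in the quotient, which is exactly what the coordinate computation above accomplishes.
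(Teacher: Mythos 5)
Your proposal is correct and follows essentially the same route as the paper: both reduce the claim to Lemma~\ref{LemmaQL} applied in the quotient $N \rquo G$, using that the basis passes through $\Lie{n}$ so that the first $\dim N$ exponential factors die under $\pr$ and the remaining ones give the Malcev coordinates of the quotient. The paper merely asserts that this is ``clear from the proof of Lemma~\ref{LemmaQL}'' (and, for the uniform case, cites Corwin--Greenleaf), whereas you spell out the identification of $\{\bar{X}_{k+1},\ldots,\bar{X}_\dimG\}$ as a strong Malcev basis of $\Lie{g}/\Lie{n}$ and the resulting formulas for $\pr(\Gamma)$ and $\pr(\Sigma)$; this is a faithful filling-in of the same argument, not a different one.
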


	\begin{proof}
Suppose $\dim(N) = \dimG' <  n$, i.e., $\Lie{n} = \Rspan{X_1, \ldots,
  X_{\dimG '}}$ (cf.~Definition~\ref{StrongMB}). If $\Gamma$ is a
quasi-lattice, then it is clear from the proof of Lemma~\ref{LemmaQL}
that 
	\begin{align*}
		\pr(\Gamma) = \pr \bigl (\exp(k_{n'+1} X_{n'+1}) \cdots \exp(k_\dimG X_\dimG) \bigr)
	\end{align*}
is a quasi-lattice of $N \rquo G$ with fundamental domain
	\begin{align*}
		\pr(\Sigma) = \bigl\{ \exp(t_\dimG X_\dimG) \cdots
                \exp(t_{\dimG' +1} X_{\dimG' +1}) \mid t_1, \ldots, t_\dimG
                \in [0, 1) \bigr\}. 
	\end{align*}
The same reasoning applies if $\Gamma$ is a uniform subgroup of $G$,
since the proof of Lemma~\ref{LemmaQL} is an adaption of the proof of
\cite[Thm.~5.3.1]{CorwinGreenleaf}.   
(Alternatively, it is mentioned in \cite[Thm.~5.3.1]{CorwinGreenleaf} that $\Gamma \cap
G_{\dimG'}$ is uniform in $G_{\dimG'}$; so by \cite[Lem.~5.1.4~(a)]{CorwinGreenleaf},
 $\pr(\Gamma)$ is uniform in $G_{\dimG'} \rquo G$.) 
	\end{proof}

\subsection{Kirillov Theory  of Irreducible Representations} \label{SubsStandardModel}

According to Kirillov, every irreducible unitary representation $\pi $ of $G$ is constructed
by the following recipe: Choose $l\in \Lie{g}^*$ and choose a
subalgebra $\pid$ of maximal dimension, a so-called polarization,  such that 
$l([Y_1,Y_2]) = 0$ for all
$Y_1,Y_2\in \Lie{m}$. Consequently, $\chi _l:\PID  \to \mathbb{T} $ defined by 
$$
\chi _l(m) = e^{2\pi i \langle l , \log m\rangle }  \qquad \text{ for }
  m  \in \PID = \exp (\pid) \, 
$$
 is a character of $M$. The representation associated to $l$ 
 is the representation $\pi _l = \indR{\chi_l}{\PID}{G}$
induced from $\chi _l$ to $G$. It acts on the
representation space $\L{2}{\PID \rquo G} \cong
\L{2}{\R^d}$. It is well known that $(a)$  different polarizations of
$l$ yield  equivalent  representations, and $(b)$ different functionals
$l$ and $l' \in \Lie{g}^*$ in the same coadjoint orbit $\cO _l =
\coAd(G)l$ yield equivalent representations. Thus, every
orbit corresponds to a unique equivalence class of irreducible unitary
representations. In particular, we are free to choose the most
convenient functional $l$ and polarization $\pid$ when we write
a representation explicitly.

If a polarization $\pid $ is an ideal of $\Lie{g}$,  $\pid \nsubgr
\Lie{g}$, the induced representation $\pi _l = \indR{\chi_l}{\PID}{G}$ possesses the following
explicit description. Let $\QA = \PID \rquo G $ be the quotient group,  $q: G
\mapsto \PID \rquo G$ be the quotient
map, and  choose a
Borel cross-section $\sigma: \PID \rquo G \to G$ such that $q
\circ \sigma = \id$. Then every element $g\in G$ can be written in a
unique way as 
$$
g = p(g) \sigma (q(g)) = m h$$
with $p(g) = g \sigma (q(g))\inv \in \PID$. 
For $h' \in \sigma(\PID \backslash
G)$ we write 
	\begin{align} \label{ab}
		h'g = h' \hspace{2pt} m \hspace{2pt}h'^{-1}
                \hspace{2pt} h' h = h' \hspace{2pt} m \hspace{2pt}
                h'^{-1} \hspace{2pt} p(h' \hspace{2pt} h) \hspace{2pt}
                \sigma (q(h' \hspace{2pt} h)), 
	\end{align}
and  $h' \hspace{2pt} m \hspace{2pt} h'^{-1} \in \PID \nsubgr
G$. Then the  induced representation acting on  $f \in \L{2}{\PID \rquo G}$
is given by 
	\begin{align*}
		\Bigl(\indR{\chi_l}{\PID}{G}(g)f\Bigr)(q(h')) &= e^{2
                  \pi i \bracket{l}{\log\bigl(h' m h'^{-1} p(h' h)\bigr)}} \, f\bigl(q(h'
                \hspace{2pt} h)\bigr) \\ 
		&= e^{2 \pi i \bracket{l}{\log\bigl(h' m h'^{-1}\bigr)}} \, e^{2 \pi i \bracket{l}{\log\bigl(p(h' h)\bigr)}} \, f\bigl(q(h' \hspace{2pt} h)\bigr),
	\end{align*}
where the second equality holds because $\chi _l$ is a character on
$M$.   Let us point out that
	\begin{align*}
		\bracket{l}{\log(h' \hspace{2pt}m \hspace{2pt} h'^{-1})} = \bracket{\coAd(h'^{-1})l}{\log(m)}.
	\end{align*}
This observation explains why a  convenient description of the
coadjoint orbit $\Orbit_l$  helps to understand 
$\pi _l$. We will elaborate further in Subsection~\ref{ChRSubs}.

\subsection{Square-Integrable Representations}

An irreducible unitary representation $\pi $ of $G$ is square-integrable modulo the center of $G$ if there exists a vector $v \in \RS $ such that
 	\begin{align*}
		\int _{G/Z} \Abs{\bracket{\pi (\dot{g})v}{v}}^2 \, d\dot{g} < \infty.
	\end{align*}
We write $\pi \in SI/Z(G)$ when $\pi $ is  square-integrable modulo the center of $G$. A fundamental theorem of
Moore and Wolf~\cite{MooreWolf} characterizes all irreducible
representations in $SI/Z(G)$ of a nilpotent group by the
flat-orbit condition.

\begin{thm}[\cite{MooreWolf}] \label{ThmMW}
A representation $\pi _l$  of a connected, simply connected nilpotent group $G$ is
in $SI/Z(G)$ if and only if the coadjoint orbit $\Orbit _l = \mathrm{Ad}(G)^*l$  is flat, if and
only if $\Orbit_l \oplus \Liez{g}^* = \Lie{g}^*$, if and only if
$\Orbit _l = l +
\Lie{z}(\Lie{g})^\perp $, if and only if the symplectic form $B_l(\, . \,, \, . \,) := l([\, . \,, \, . \,])$ is non-degenerate on $\Lie{g} / \Liez{g}$.
\end{thm}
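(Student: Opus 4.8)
The five conditions split into four that are algebraic--geometric and the analytic assertion $\pi_l\in SI/Z(G)$; the plan is to prove the algebraic equivalences by a short dimension count and then the analytic one in the Kirillov model of Subsection~\ref{SubsStandardModel}, the latter being the substantial part. For the algebraic part I would use two elementary facts about the coadjoint action. First, since $\mathrm{Ad}(g)$ fixes $\Liez{g}$ pointwise, $\coAd(g)l-l$ annihilates $\Liez{g}$ for every $g\in G$, so $\Orbit_l\subseteq l+\Liez{g}^\perp$ \emph{always}. Second, the tangent space of $\Orbit_l$ at $l$ is $\coad(\Lie{g})l$, which under the pairing is the annihilator $\mathfrak{r}_l^\perp$ of the radical $\mathfrak{r}_l:=\{X\in\Lie{g}:B_l(X,\cdot)=0\}$ of $B_l$; since $\Liez{g}\subseteq\mathfrak{r}_l$,
\[
\dim\Orbit_l=\dim\Lie{g}-\dim\mathfrak{r}_l\ \le\ \dim\Lie{g}-\dim\Liez{g}=\dim\bigl(l+\Liez{g}^\perp\bigr),
\]
with equality exactly when $\mathfrak{r}_l=\Liez{g}$, i.e.\ when $B_l$ is nondegenerate on $\Lie{g}/\Liez{g}$. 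Because coadjoint orbits of connected, simply connected nilpotent groups are closed embedded submanifolds of $\Lie{g}^*$ (see \cite{CorwinGreenleaf}), this dimension equality is in turn equivalent to $\Orbit_l$ being a nonempty, open and closed subset of the connected set $l+\Liez{g}^\perp$, hence to $\Orbit_l=l+\Liez{g}^\perp$; and $\Orbit_l=l+\Liez{g}^\perp$ is, after fixing a vector-space complement of $\Liez{g}$ in $\Lie{g}$, visibly the same as $\Orbit_l\oplus\Liez{g}^*=\Lie{g}^*$. These are the two concrete forms of the flat-orbit condition, so the loop among the non-analytic conditions closes.

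\emph{Nondegeneracy of $B_l$ on $\Lie{g}/\Liez{g}$ implies $\pi_l\in SI/Z(G)$.} Assume it, so $\mathfrak{r}_l=\Liez{g}$ and $(\Lie{g}/\Liez{g},B_l)$ is symplectic of dimension $2d$. Choosing a Malcev basis through $\Liez{g}$ and a polarization $\pid\supseteq\Liez{g}$ whose image in $\Lie{g}/\Liez{g}$ is Lagrangian, one realizes $\pi_l$ on $\L{2}{\R^d}$ in the standard form $(\pi_l(g)v)(\zeta)=e^{2\pi iP(g,\zeta)}v(\zeta\cdot g)$ and checks, exactly as in the Gabor computation behind Example~\ref{MainThmEuclVersion}, that $\Abs{\bracket{\pi_l(g)v}{v}}$ depends only on the image of $g$ in $G/Z(G)\cong\R^{2d}$ and equals a fixed multiple of the modulus of the windowed Fourier transform of $v$ against itself on $\R^{2d}$. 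Moyal's identity then gives $\int_{G/Z}\Abs{\bracket{\pi_l(\dot g)v}{v}}^2\,d\dot g=c\,\|v\|^4<\infty$, so $\pi_l\in SI/Z(G)$.

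\emph{Conversely $\pi_l\in SI/Z(G)$ forces nondegeneracy.} I would argue by contraposition. Suppose $\mathfrak{r}_l\supsetneq\Liez{g}$ and fix $X_0\in\mathfrak{r}_l\setminus\Liez{g}$; since every polarization contains the radical, $X_0$ lies in a polarization $\pid$. In Malcev coordinates adapted to the chain $\Liez{g}\subseteq\mathfrak{r}_l\subseteq\pid\subseteq\Lie{g}$, the operator $\pi_l(\exp tX_0)$ acts on $\L{2}{\R^d}$ by multiplication with a unimodular factor $e^{2\pi it\psi(\zeta)}$, where $\psi$ is a real polynomial whose \emph{linear part vanishes} because $X_0$ lies in the radical and hence $\coAd(\exp tX_0)$ fixes $l$. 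Writing a generic point of $G/Z(G)$ as $\exp(tX_0)$ times the exponential of the $d$ ``modulation'' coordinates times the remaining coordinates, one uses Plancherel in the modulation coordinates to find that the corresponding partial integral of $\Abs{\bracket{\pi_l(\dot g)v}{v}}^2$ is \emph{independent of $t$} and equals $\int_{\R^d}\Abs{v(\zeta)}^2\Abs{w(\zeta)}^2\,d\zeta$ for an appropriate $w=\pi_l(\cdot)v$ depending only on the remaining coordinates; as this is strictly positive for $v\neq0$ and $t$ runs over all of $\R$, the integral over $G/Z(G)$ diverges, so $\pi_l\notin SI/Z(G)$. (In the degenerate subcase where $\exp(\R X_0)$ acts by scalars, $\Abs{\bracket{\pi_l(\dot g)v}{v}}$ is literally constant along the noncompact $X_0$-direction and divergence is immediate.)

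The step I expect to be the main obstacle is precisely this last one. The difficulty is not any single estimate but the bookkeeping: one must produce Malcev coordinates on $G/Z(G)$ adapted simultaneously to the radical $\mathfrak{r}_l$ and to a Lagrangian splitting of a complement, and pin down the exact form of $\pi_l$ in these coordinates, so that the ``radical but noncentral'' directions contribute only unimodular phases that are annihilated once one integrates over the modulation directions, leaving a nonnegative integrand constant along a noncompact coordinate. This is essentially the content of the classical formal-degree computation, whose upshot is that $\pi_l\in SI/Z(G)$ with formal degree $\Abs{\Pf\bigl(B_l|_{\Lie{g}/\Liez{g}}\bigr)}$ precisely when $B_l$ is nondegenerate on $\Lie{g}/\Liez{g}$ (see \cite{MooreWolf} and \cite[Ch.~4]{CorwinGreenleaf}); one may alternatively simply quote that computation.
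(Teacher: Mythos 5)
The paper offers no proof of this statement at all: Theorem~\ref{ThmMW} is imported verbatim from Moore--Wolf \cite{MooreWolf} as a black box (see also \cite[\S 4.5]{CorwinGreenleaf}), so there is no in-paper argument to compare against, and your decision to actually sketch one is already a different route by definition. Your treatment of the four non-analytic conditions is correct and essentially complete: $\Orbit_l\subseteq l+\Liez{g}^\perp$ always, $\dim\Orbit_l=\dim\Lie{g}-\dim\mathfrak{r}_l$, and closedness of unipotent orbits turns the dimension equality $\mathfrak{r}_l=\Liez{g}$ into $\Orbit_l=l+\Liez{g}^\perp$; this matches the paper's (and Moore--Wolf's) reading of ``flat'' as ``coset of $\Liez{g}^\perp$'' rather than merely ``affine subspace'' (the two are not the same: for $\mathbf{H}_1\times\mathbf{H}_1$ and $l=Z_1^*$ the orbit is an affine plane but not all of $l+\Liez{g}^\perp$, and $\pi_l\notin SI/Z$), so it is worth stating explicitly which convention you are using.

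The analytic equivalence is where care is needed. In the forward direction your claim that $\Abs{\bracket{\pi_l(g)v}{v}}$ ``equals a fixed multiple of the modulus of the windowed Fourier transform of $v$'' is false in general: the action $\zeta\mapsto\zeta\cdot g$ on $\PID\rquo G$ is a polynomial, generally non-Abelian action, not a translation --- the paper stresses exactly this point in the introduction and in the Dynin--Folland example. Moyal's identity must therefore be replaced by the standard two-step computation: Plancherel in the modulation variables (which eliminates the phases) followed by a polynomial, measure-preserving change of variables in the remaining coordinates, yielding $\fd^{-1}\Norm{}{v}^2\Norm{}{w}^2$ with $\fd=\Abs{\Pf(l)}$. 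In the converse, the assertions that $X_0\in\mathfrak{r}_l\setminus\Liez{g}$ contributes only a phase whose coupling to the modulation variables is $t$-independent, and that $\pi_l(\exp tX_0)$ is a pure multiplication operator, require precisely the adapted Malcev coordinates and induced-representation bookkeeping you defer; as written this is a plan, not a proof. Since you explicitly flag this and offer to quote the classical formal-degree computation, your fallback coincides with what the paper actually does, and the sketch is acceptable on that understanding.
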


For any such $\pi_l$ one can restrict to representatives $l \in
\Liez{g}^*$. Furthermore, if $SI/Z(G) \neq \emptyset$, then up to a
set of Plancherel  measure zero all $\pi \in \widehat{G}$ are
square-integrable modulo $Z$. For each $\pi \in SI/Z(G)$ there exists
a number $\fd \in \R\setminus \{0\}$, called the formal degree or formal
dimension of $\pi$, such that 
	\begin{align}
		\fd \int_{G/Z(G)} \bracket{\pi(\dot{g})v_1}{w_1}
                \overline{\bracket{\pi(\dot{g})v_2}{w_2}} \,d\dot{g} =
                \bracket{v_1}{v_2}
                \overline{\bracket{w_1}{w_2}} \label{fd} 
	\end{align}
holds true for all $v_1, v_2, w_1, w_2 \in \RS$.

The formal degree depends on the normalization of the Haar measure $\mu$ on $G$ and is intimately connected to the symplectic form $B_l$: Pick a strong Malcev basis $\{Z_1, \ldots Z_r, X_1, \ldots, X_{2d}\}$ passing through $\Liez{g}$. Normalize the Lebesgue measure $\nu$ on $\Lie{g}$ such that
		\begin{equation} \label{NormHaar}
	\left\{ \begin{array}{rcl}
		\nu \bigl(\{ z_1 Z_1 + \ldots + x_{2d} X_{2d} \mid z_1, \ldots, x_{2d} \in [0,1] \} \bigr) &=& 1, \\
		\nu_{\Liez{g}} \bigl(\{ z_1 Z_1 + \ldots + z_r Z_r \mid z_1, \ldots, z_r \in [0,1] \} \bigr) &=& 1
	\end{array}\right.
	\end{equation}
and denote by $\mu$, $\mu_G$ or $dg$ the Haar measure on $G$ arising
from $\nu$ via strong Malcev coordinates. Since $Z(G)$ is normal in
$G$, we know that  $dg = dz \hspace{2pt} d\dot{g}$. We  fix this Haar
measure for once and all. Due to Theorem~\ref{ThmMW}, a 
square-integrable representation $\pi \in SI/Z(G)$ and its orbit 
$\Orbit_\pi$ depend only  on the representative $l \in
\Liez{g}^*$. So, the Pfaffian polynomial $\Pf$ defined by 
	\begin{align*}
		\Pf(l)^2 := \det\Bigl(B_l(X_j, X_k)_{j, k = 1}^{2d} \Bigr)
	\end{align*}
can be considered as a polynomial of $\Liez{g}^*$. By \cite[Thm.~4]{MooreWolf}, $\fd$ and $\Pf$ depend on the
normalization of $\mu$ in the same way; in fact we have 
	\begin{align} \label{pff} 
		\fd = \hspace{2pt} \Pf(l).
	\end{align}

	\begin{ex}
If $G$ is the Heisenberg group $\H$ and $\pi = \pi_\l$, the
Schr\"{o}dinger representation with parameter $\l \neq 0$, then $\fd =
\Abs{\l}^d$. (Cf.~Example~\ref{MainThmEuclVersion}.) 
	\end{ex}

\subsection{Graded Groups} \label{GrGrSubs}

	\begin{dfn}
A Lie algebra $\Lie{g}$ is graded if it possesses a vector space
decomposition $\Lie{g} = \bigoplus_{k = 1}^\infty \Lie{g}_k$ such that
 $[\Lie{g}_k, \Lie{g}_{k'}] \subseteq \Lie{g}_{k + k'}$ and 
 $\Lie{g}_k \neq \{ 0\}$ for only finitely many $k$. 
	\end{dfn}

We will write $\Lie{g}_N$ for the summand of highest order, whence
$\Lie{g}_k = \{ 0 \}$ for all $k > N$. However, some $\Lie{g}_k$ with $k
< N$ may be trivial, too. 

Every graded Lie algebra is nilpotent and the corresponding
connected, simply connected Lie group is called graded and is always
nilpotent locally compact.



	\begin{rem}
 Clearly $(\R^\dimG, +)$ and the Heisenberg group $\H$ are graded; $\H$ admits various gradations, one of which is given by
	\begin{align*}
		\h = {\h}_1 \oplus {\h}_2, \hspace{10pt} \mbox{ with } \hspace{10pt} {\h}_1 = \Rspan{Y_1, \ldots, X_d} \hspace{10pt} \mbox{ and } \hspace{10pt} {\h}_2 = \R Z.
	\end{align*}
Every nilpotent Lie algebra of  $\dim(\Lie{g}) \leq 6$  is graded.
The lowest dimension for which there are non-graded nilpotent Lie
algebras is $\dim(\Lie{g}) = 7$. 
	\end{rem}





A graded Lie algebra is naturally equipped with a  dilation.

	\begin{dfn}
Let $\Lie{g}$ be graded Lie algebra with gradation $\Lie{g} =
\bigoplus_{k = 1}^N \Lie{g}_k$. We define the family of dilations $\{
\dil_s \}_{s >0}$  on $\Lie{g}$ by
	\begin{align} \label{dilgrad}
		&\dil_s(X) := s^k \hspace{5pt} \mbox{ if } \hspace{5pt} X \in \Lie{g}_k.
	\end{align}
	\end{dfn}

Since we will deal with both gradations and rational structures at the
same time, we need some  compatibility of these structures. 

	\begin{dfn} \label{Compatibility}
Let $\Lie{g}$ be a graded Lie algebra with gradation $\Lie{g} = \bigoplus_{k = 1}^N  \Lie{g}_k$. Furthermore, suppose there exists a rational structure $\Lie{g}_\Q \subgr \Lie{g}$. We will say the rational structure is compatible with the gradation if there exists a strong Malcev basis $\{ X_1, \ldots, X_n \}$ of $\Lie{g}$ such that
	\begin{itemize}
		\item[(i)] $\Lie{g}_\Q := \Qspan{X_1, \ldots, X_n}$,
		\item[(ii)] $\{ X_1, \ldots, X_n \}$ passes through the ideals $\bigoplus_{k = l}^N  \Lie{g}_k \nsubgr \Lie{g}$, $l = N, \ldots, 1$.
	\end{itemize}
	\end{dfn}


\section{Constructing Orthonormal Bases} \label{Construction}

In this section we construct orthonormal bases in the orbit  of every
square-integrable representation of a graded
$SI/Z$-group. Our main theorems are the following.

	\begin{thm} \label{MainThmQL} [Quasi-Lattice]
Let $G$ be a graded $SI/Z$-group of $\dim(G) = 2d + 1$ with
$1$-dimensional center. Then there exists a normal subgroup $\PID
\nsubgr G$ with the following properties: 
	\begin{itemize}
		\item[(i)] The ideal $\pid = \log(\PID) \nsubgr
                  \Lie{g}$ is a polarization simultaneously for all $l \in
                  \Liez{g}^*$. It thus induces all $\pi \in SI/Z(G)$
                  and permits a universal realization of all $\pi \in
                  SI/Z(G)$ in $\L{2}{\PID \rquo G}$ related to  the
                  same set of strong Malcev coordinates for $G$. 
		\item[(ii)] For every $\pi \in SI/Z(G)$ with formal
                  degree $d_\pi $  there exist a
                  discrete subset $\Gamma \subseteq G/Z(G)$ and a
                  relatively compact set $\PFD \subseteq \PID \rquo
                  G$, such that 
	\begin{align} \label{eq:ll0}
		\bigl\{ \mu _{M \rquo G}(F)^{-1/2}  \hspace{2pt} \pi(\gamma) \, 1_{\PFD} \mid \gamma \in \Gamma \bigr\}
	\end{align}
forms an orthonormal basis of $\L{2}{\PID \rquo G}$. Furthermore,
$\Norm{\L{2}{\PID \rquo G}}{1_\PFD} = \fd^{1/2}$. 
	\end{itemize}
	\end{thm}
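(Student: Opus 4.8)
The plan is to reproduce the Gabor construction of Example~\ref{MainThmEuclVersion} inside the coordinate system that Kirillov theory attaches to $\pi$. First I would produce the normal polarizing ideal. Since $\dim(G)=2d+1$ and $\mathfrak{z}(\mathfrak{g})=\mathbb{R}Z$ is one--dimensional, Theorem~\ref{ThmMW} says that for every $l\in\mathfrak{z}(\mathfrak{g})^*$ with $l(Z)\neq 0$ the form $B_l$ is nondegenerate on $\mathfrak{g}/\mathfrak{z}(\mathfrak{g})$, and as such $l$ ranges only over scalar multiples of a fixed functional, all these forms have the ``same'' isotropy behaviour. Choosing a strong Malcev basis adapted to the gradation (Definition~\ref{Compatibility}) and running Kirillov's inductive polarization procedure, I expect to exhibit a single $(d{+}1)$--dimensional ideal $\mathfrak{m}\supseteq\mathfrak{z}(\mathfrak{g})$ that is isotropic for $B_l$ for all $l\in\mathfrak{z}(\mathfrak{g})^*$ at once; by the dimension count this is a polarization for each such $l$ simultaneously, which, together with the fact that every $\pi\in SI/Z(G)$ has a representative $l\in\mathfrak{z}(\mathfrak{g})^*$, is exactly assertion~(i). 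This also fixes the model $L^2(M\backslash G)\cong L^2(\mathbb{R}^d)$, with $\dim(M\backslash G)=\dim(M/Z)=d$.

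Next I would make $\pi_l=\mathrm{ind}_M^G\chi_l$ explicit via the formulas of Subsection~\ref{SubsStandardModel}, taking the Borel section from the Malcev coordinates so that $M\backslash G$ is coordinatised by the last $d$ coordinates. Normality of $M$ gives the clean form $\bigl(\pi_l(g)f\bigr)(q(h'))=e^{2\pi i\langle\mathrm{Ad}^*(h'^{-1})l,\log m\rangle}\,e^{2\pi i\langle l,\log p(h'h)\rangle}\,f\bigl(q(h'h)\bigr)$, in which the factor from $m\in M$ is a modulation — note $m\mapsto e^{2\pi i\langle\mathrm{Ad}^*(h'^{-1})l,\log m\rangle}$ is a character of $M$ because $\mathfrak{m}$ is an ideal and hence isotropic for $B_{\mathrm{Ad}^*(h'^{-1})l}$ as well — and the action on the argument is a right translation on $M\backslash G$. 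The key structural point, and the step I expect to be the main obstacle, is that after a suitable choice of a \emph{Chevalley--Rosenlicht admissible} Malcev basis the coadjoint action $h'\mapsto\mathrm{Ad}^*(h'^{-1})l$ becomes affine in the $M\backslash G$--coordinate of $h'$, and the pairing $\langle\mathrm{Ad}^*(h'^{-1})l,\log m\rangle$, restricted to $m$ in an appropriately rescaled lattice $\Lambda\subset M/Z$, collapses to an honest Fourier character $e^{2\pi i\langle\vartheta,\zeta\rangle}$ in the base variable $\zeta=q(h')\in\mathbb{R}^d$. This is the analogue of the bilinearity $e^{2\pi i\lambda y t}$ on $\mathbf{H}_d$, and it is precisely here that the grading and the one--dimensionality of the center are used.

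With this in hand the Gabor argument runs as in Example~\ref{MainThmEuclVersion}. I would take $F:=\Omega_0=\mathrm{pr}(\Sigma)$, a fundamental domain of a quasi-lattice $\Gamma_1\subseteq M\backslash G$ coming from Corollary~\ref{QLQuoGr}, so that $\{\Omega_0\cdot\gamma:\gamma\in\Gamma_1\}$ tiles $M\backslash G$; for each fixed cell the rescaled modulations $\{\mu_{M\backslash G}(F)^{-1/2}\,\pi_l(\lambda)\,1_{\Omega_0\cdot\gamma}:\lambda\in\Lambda\}$ form the Fourier orthonormal basis of $L^2(\Omega_0\cdot\gamma)$, the extra phase $e^{2\pi i\langle l,\log p(h'h)\rangle}$ being unimodular and, on a fixed cell, reducing against the conjugate factor to a constant that vanishes on the diagonal, hence harmless for orthonormality. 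Orthogonality across distinct cells is immediate from disjointness of supports, and completeness follows from the tiling together with completeness of the Fourier system on each cell; setting $\Gamma:=\Lambda\,\Gamma_1\subseteq G/Z(G)$ (a quasi-lattice, again by the reasoning of Lemma~\ref{LemmaQL} and Corollary~\ref{QLQuoGr}) yields the asserted basis~\eqref{eq:ll0}.

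It remains to pin down $\|1_F\|_{L^2(M\backslash G)}^2=\mu_{M\backslash G}(F)$. Using the normalization~\eqref{NormHaar}, the Jacobians of the basis rescalings performed above (tracked throughout, as announced after Lemma~\ref{StrongMB}), and the biorthogonality constraint that the covolumes of the modulation lattice $\Lambda$ and the translation quasi-lattice $\Gamma_1$ be reciprocal, I would identify this volume with $\det\!\bigl(B_l(X_j,X_k)_{j,k=1}^{2d}\bigr)^{1/2}=\Pf(l)$, which equals $d_\pi$ by~\eqref{pff}. This last bookkeeping, though routine in spirit, is the second point where care is genuinely needed, since it is exactly the place where the symplectic data of $\pi$ re-enters the purely combinatorial Gabor construction.
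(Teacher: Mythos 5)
Your overall architecture matches the paper's: construct a normal polarizing ideal from the gradation, pass to a Chevalley--Rosenlicht admissible Malcev basis, tile $\PID\rquo G$ by a quasi-lattice coming from Corollary~\ref{QLQuoGr}, get orthogonality across cells from disjoint supports, and identify $\mu_{\PID\rquo G}(F)$ with $\fd$ via the Pfaffian. But there is a genuine gap at the step you yourself flag as the main obstacle. It is \emph{not} true that the coadjoint action $h'\mapsto \coAd(h'^{-1})l$ becomes affine in the $\PID\rquo G$--coordinates, nor that the pairing $\bracket{\coAd(h'^{-1})l}{\log m}$ collapses to an honest Fourier character $e^{2\pi i\langle\vartheta,\zeta\rangle}$. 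Theorem~\ref{AdaptedChevalleyRosenlicht}(iii) only gives a \emph{triangular polynomial} parametrization $Q_j(t_j,\dots,t_1)=t_j+\tilde Q_j(t_{j-1},\dots,t_1)$ with $\tilde Q_j$ generically of degree $\ge 2$, and the right translation on $\PID\rquo G$ is non-Abelian in general; the Dynin--Folland example exhibits exactly this, with phases like $e^{2\pi i(\sum_j t_j\vartheta_j-\frac{K^{10}}{2\lambda}t_1t_2\vartheta_3)}$ and shifts $t_3\mapsto t_3+\eta_3+\frac{K^{10}}{\lambda}t_1\eta_2$. Consequently the per-cell system is \emph{not} the standard Fourier orthonormal basis of $L^2(\Omega_0\cdot\gamma)$, and both orthonormality and completeness fail to follow from ``completeness of the Fourier system on each cell.''

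What the paper does instead, and what your proposal is missing, is to exploit the triangular structure variable by variable: for orthogonality one isolates the largest index $j$ with $\vartheta_j\neq\vartheta_j'$, shows (using Lemmas~\ref{LemPropStrongMC} and~\ref{PolyQL}) that the total phase equals $t_j(\vartheta_j-\vartheta_j')$ plus terms depending only on $t_1,\dots,t_{j-1}$, and then integrates out $t_j$ by Fubini; for completeness one runs an induction that peels off one coordinate at a time, each time removing a unimodular factor that depends only on the already-fixed variables. This iterated one-dimensional Fourier argument is the actual content of Propositions~\ref{Orthonormality} and~\ref{Completeness}, and without it (or the alternative covolume criterion $\mu_{G/Z(G)}(\Sigma)^{-1}=\fd$ for completeness, which still presupposes orthonormality) your reduction to Example~\ref{MainThmEuclVersion} does not go through. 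The construction of the polarization in part (i) is also only sketched in your proposal; the paper's Propositions~\ref{Nodd} and~\ref{Neven} need the nontrivial dimension identity $\dim\Lie{g}_{N-k}=\dim\Lie{g}_k$, forced by nondegeneracy of $B_l$, to verify that the isotropic ideal built from the top half of the gradation has the right dimension $d+1$.
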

The construction shows that  $\Gamma $ is derived from a quasi-lattice
of $G/Z(G)$ by a small modification.  

	\begin{thm} \label{MainThmUnif} [Uniform Subgroup]
Let $G$ be a graded $SI/Z$-group of $\dim(G) = 2d + 1$ with $1$-dimensional center and a rational structure $\Lie{g}_\Q$ compatible with the gradation.
Then there exists a normal subgroup $\PID \nsubgr G$ with the following properties:
	\begin{itemize}
		\item[(i)] The ideal $\pid = \log(\PID) \nsubgr
                  \Lie{g}$ is a polarization simultaneously for all $l \in
                  \Liez{g}^*$. It thus induces all $\pi \in SI/Z(G)$
                  and permits a universal realization of all $\pi \in
                  SI/Z(G)$ in $\L{2}{\PID \rquo G}$ related to  the
                  same set of strong Malcev coordinates for $G$. 
		\item[(ii)] For every $\pi \in SI/Z(G)$
                  there exist a uniform subgroup $\Gamma \subgr
                  G/Z(G)$,  a relatively compact set $\PFD
                  \subseteq \PID \rquo G$,  such that 
	\begin{align*}
		\bigl\{  \mu _{M\rquo G}(F)^{-1/2} \hspace{2pt} \pi(\gamma) \, 1_\PFD \mid \gamma \in \Gamma \bigr\}
	\end{align*}
forms an orthonormal basis of $\L{2}{\PID \rquo G}$. Furthermore
$\Norm{\L{2}{\PID \rquo G}}{1_\PFD} = C^{-1} \hspace{2pt} \fd^{1/2}$
for some integer $C$ depending on $\Gamma $. 
	\end{itemize}
	\end{thm}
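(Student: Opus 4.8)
The plan is to reduce the uniform-subgroup statement to the quasi-lattice statement (Theorem~\ref{MainThmQL}) by exploiting the rational structure. The compatibility hypothesis gives us a strong Malcev basis $\{X_1,\dots,X_n\}$ that simultaneously generates $\Lie{g}_\Q$ and passes through the gradation ideals. First I would run the construction of Theorem~\ref{MainThmQL} relative to this basis: this produces the normal polarizing ideal $\pid\nsubgr\Lie{g}$ (which under the compatibility assumption can be taken to be one of the gradation ideals $\bigoplus_{k\ge l}\Lie{g}_k$, hence automatically $\Q$-rational), the universal realization of all $\pi\in SI/Z(G)$ on $\L{2}{\PID\rquo G}$, and a quasi-lattice $\Gamma_0$ in $G/Z(G)$ together with the compact set $\PFD_0$ so that $\{\mu_{M\rquo G}(F_0)^{-1/2}\pi(\gamma)1_{\PFD_0}:\gamma\in\Gamma_0\}$ is an orthonormal basis. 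Part~(i) is then literally the same statement as in Theorem~\ref{MainThmQL}(i), so no new work is needed there.

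The second step is to replace the quasi-lattice $\Gamma_0$ by an honest subgroup. Because $\Lie{g}_\Q$ is a rational structure, by the rescaling remark following the Theorem in Subsection~3.2 a sufficiently large integer dilation of the basis $\{X_1,\dots,X_n\}$ generates a uniform subgroup of $G$; passing to the quotient by the (one-dimensional, rational) center and invoking Corollary~\ref{QLQuoGr}, we get a uniform subgroup $\Gamma\subgr G/Z(G)$ of the form $\Gamma = \exp(C\Z X_{j_1})\cdots$ for an appropriate integer $C$, with a relatively compact fundamental domain that is a ``box'' in the Malcev coordinates. The key point is that the construction of the orthonormal basis in Theorem~\ref{MainThmQL} is really a tensor-product Gabor-type argument, exactly as in Example~\ref{MainThmEuclVersion}: on the ``$\QA$-side'' the group elements act by translations along a fundamental domain of a lattice, and on the ``$\PID$-side'' they act by modulations that form a Fourier basis of $L^2$ of that fundamental domain. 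Shrinking the translation lattice by the factor $C$ forces us to enlarge the modulation lattice by the same factor $C$ (and correspondingly shrink the box $F$), so that the Gabor-type biorthogonality and completeness computation goes through verbatim with $\mu_{M\rquo G}(F) = C^{-2d}\mu_{M\rquo G}(F_0) = C^{-2d}\fd$, giving $\Norm{\L{2}{\PID\rquo G}}{1_\PFD} = C^{-d}\fd^{1/2}$, i.e.\ the claimed form $C'^{-1}\fd^{1/2}$ with $C'$ an integer depending on $\Gamma$.

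The main obstacle, and where the bulk of the work sits, is verifying that the enlarged modulation lattice and shrunken box still fit the representation-theoretic bookkeeping: one needs that the map $h'\mapsto q(h'h)$ on the cross-section $\sigma(\PID\rquo G)$ is still (up to the coadjoint twist) a translation by the $\QA$-component of $\gamma$ on a fundamental domain $F$ for the shrunken $\QA$-lattice, and that the characters $m\mapsto e^{2\pi i\langle\coAd(h'^{-1})l,\log m\rangle}$, evaluated at the lattice points in $\PID$, restrict on $F$ to an orthonormal basis of $L^2(F)$. This is precisely the content of the flat-orbit condition (Theorem~\ref{ThmMW}), which guarantees that $B_l$ is nondegenerate on $\Lie{g}/\Liez{g}$ and hence that the pairing between the $\PID$-lattice and the $\QA$-box is a perfect duality of the right volume; the rational structure ensures all the relevant polynomial coordinate changes $P_{X_j}$, $R_{X_j}$ have rational coefficients, so that the integer $C$ can be chosen to clear all denominators simultaneously and keep $\Gamma$ a subgroup. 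Once the quasi-lattice version is in hand, this is a careful but routine tracking of constants through the same computation, so I would present it by pointing to the proof of Theorem~\ref{MainThmQL} and indicating the single modification (rescale translations by $C$, dilate modulations by $C$, shrink $F$ by $C^{-1}$ in each of the $2d$ directions).
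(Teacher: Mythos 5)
Your reduction to the quasi-lattice theorem plus a rescaling captures part of the paper's strategy (the paper indeed proves a technical Theorem~\ref{TVMainThm} whose part (B) produces the uniform subgroup by rescaling a Ch-R-admissible basis), but there are two genuine gaps. The first and most serious: Theorem~\ref{MainThmUnif} asserts the conclusion for \emph{every} $\pi\in SI/Z(G)$, i.e.\ for every $l=\lambda Z^*$ with $\lambda\neq 0$, whereas your denominator-clearing argument can only work when $\lambda$ is rational. The Ch-R-admissible vectors $\tilde X_j$ are obtained by inverting the matrix with entries $l([X_j,Y_k])=\lambda\,\langle Z^*,[X_j,Y_k]\rangle$, so they carry factors of $\lambda^{-1}$ (as in the Dynin--Folland example, where $\tilde X_j=\lambda^{-1}X_j$); for irrational $\lambda$ the multiplication polynomials in these coordinates have irrational coefficients and no integer $C$ clears them, so the rescaled set is not a subgroup. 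The paper closes this case by a separate argument: it first settles $l=\pm Z^*$ via Theorem~\ref{TVMainThm}(B), then transports the uniform subgroup, the fundamental domain, and the window by the grading dilations $\delta_s$, using that $\pi_{\pm}\circ\delta_s$ realizes the representation with parameter $\pm s^N Z^*$ and that these exhaust $SI/Z(G)$. Your proposal has no mechanism for this case.

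The second gap is in the rescaling itself. To keep the Gabor-type computation intact you must scale each dual pair $Y_j,\tilde X_j$ reciprocally (so that the infinitesimal pairing $l([\tilde X_j,Y_j])$ stays normalized, cf.\ Remark~\ref{ReparametrizingChR}), and simultaneously you must ensure that the rescaled multiplication polynomials have \emph{integer} coefficients so that the rescaled set is actually a subgroup. These requirements conflict: a reciprocal scaling by a single factor $C$ multiplies the coefficient of a higher-degree monomial by a product of positive and negative powers of $C$, and one large $C$ does not in general clear all denominators. The paper resolves this with an anisotropic rescaling by exponentially growing powers $K^{\pm 2^{j}}$ (see \eqref{satur2}), which is why the final constant is $C=K^{2^d-1}$ rather than a power of a single scale. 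Relatedly, your volume bookkeeping is off: $F$ lives in the $d$-dimensional quotient $\PID\rquo G$, not a $2d$-dimensional one, and in the paper's normalization $F$ is enlarged (not shrunk) in the $\tilde X_j$ directions. Part (i) of your plan is fine, being identical to Theorem~\ref{MainThmQL}(i).
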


We will prove a
more general, but more technical statement (Theorem~\ref{TVMainThm})
and then  derive  Theorems~\ref{MainThmQL} and \ref{MainThmUnif} as
special cases. We will structure the proofs as follows:
	\begin{itemize}
		\item[(i)] Formulate an explicit parametrization of
                  the coadjoint orbits by means of the theorem of
                  Chevalley-Rosenlicht and  construct a
                  quasi-lattice or a uniform subgroup of $G$.
		\item[(ii)] Prove the  orthogonality.
		\item[(iii)] Prove the  completeness of the set in~\eqref{eq:ll0}.
	\end{itemize}


 Throughout we make the following assumptions:
	\begin{itemize}
		\item[--] $G$ is a connected, simply connected nilpotent $SI/Z$-group of dimension $\dimG = r + 2d$ with $r$-dimensional center $Z(G)$.
		\item[--] $\pi $ is a square-integrable representation
                  of $G$ modulo its center. 
      		\item[--] The coadjoint orbit that determines $\pi $
                  has an element $l \in \Liez{g}^*$.  
                \item[--] There exists an ideal $\pid \nsubgr \Lie{g}$
                  which is a polarization for  $l \in \Liez{g}^*$. 
		\item[--] The representation is  realized  on $\L{2}{\PID \rquo
                    G}$, where $\PID := \exp(\pid)$. 
	\end{itemize}

We will denote by $\qa $ the complementary subspace such that $\Lie{g}
= \pid \oplus \qa$. Since $\Lie{m}$ is an ideal, $\qa $ is a nilpotent
Lie algebra modulo $\Lie{m}$. The natural quotient maps  will be denoted
by  $\pr: G \to G/Z(G)$, $q: G \to \PID \rquo G$. For $\pr(g)$ we also
write $\dot{g}$. Given a  cross-section $\sigma : G \rquo \PID \to G$, the
projection $p: G\to M$ occurring in  the induced
representation is given by  the map
$p: G \to M$,  $p(g) = g \sigma (q(g))\inv \in \PID$. 



\subsection{Explicit Parameterizations of Coadjoint Orbits} \label{ChRSubs}

Our first goal is to write the induced representation $\pi = \pi _l$
associated to $l\in \Lie{z}(\Lie{g})  $ as explicitly as possible. 
We recall that 
 $\pi$  can
be written as  
	\begin{align}
		\bigl( \pi(g)f \bigr) \bigl( q(h') \bigr) 
 = e^{2 \pi i \bracket{\coAd(h'^{-1})l}{\log(m)}} \, e^{2 \pi i
   \bracket{l}{\log\bigl(p(h' h)\bigr)}} \, f\bigl(q(h' \hspace{2pt}
 h)\bigr). \label{FormulaRep} 
	\end{align}
The occurrence of $
\bracket{\coAd(h'^{-1})l}{\log(m)}$ suggests to describe the coadjoint
orbit $\Orbit_l$ in more detail.  

A celebrated theorem by Chevalley and Rosenlicht provides useful
parametrizations of the orbits of unipotent group actions. We will
formulate a version of the theorem 
adapted to the coadjoint action. For a general version we refer to 
\cite{CorwinGreenleaf}, Theorem\,3.1.4. and Section~3.4, for related parametrizations in solvable groups to \cite{ACD09, ACD12}.

	\begin{thm} [Chevalley-Rosenlicht] \label{AdaptedChevalleyRosenlicht}
Let $G$ be a connected, simply connected nilpotent $SI/Z$-group of
dimension $\dimG = r + 2d$ with $(r+d)$-dimensional ideal $\pid
\nsubgr \Lie{g}$. Denote by $\cdot : \pid^* \times G \to \pid ^*$  the restriction
of the coadjoint  action to $\pid ^*$, written as a right group action, 
 	\begin{align*}
		\subbracket{\pid^*}{l \cdot g}{v} := \subbracket{\Lie{g}^*}{\coAd(g^{-1})l}{v} = \subbracket{\Lie{g}^*}{l}{\Ad(g)v} \hspace{10pt} \mbox{ for all } \hspace{10pt} l \in \pid^*, v \in \pid.
	\end{align*}
Then $\pid^* \cdot G$  is a unipotent action. 

If $\pid$ is a polarization for some $l \in \Liez{g}^*$, then the
orbit $l \cdot G$ coincides with the $d$-dimensional affine subspace
$l + \Liez{g}^\perp \subseteq \pid^*$ and has the following properties: 
	\begin{itemize}
		\item[(i)]  For a fixed strong Malcev basis $\{Z_1,
                  \ldots, Z_r, Y_1, \ldots, Y_d, X_1, \ldots, X_d \}$
                  of $\Lie{g}$ passing through $\Liez{g}$ and $\pid$
                  there exists a basis $\{\tilde{X}_d, \ldots,
                  \tilde{X}_1\}$ of $\qa$ such that the map $\phi: \R^d \to l \cdot G
$ defined by 
	\begin{align*}
		\Phi (t_1, \ldots, t_d) = l \cdot \exp(t_d \tilde{X}_d) \cdot \ldots \cdot \exp(t_1 \tilde{X}_1)
	\end{align*}
		is a diffeomorphism.
In particular, $l \cdot G = \mathrm{Ad}^* (G) l = l  \cdot \exp(\R \tilde{X}_d) \cdots \exp(\R \tilde{X}_1)$.
		\item[(ii)] The basis is chosen such that  the
                  infinitesimal action $ \pid^*\cdot \Lie{g} $
                  satisfies 
		\begin{equation} \label{ChoiceXTilde}
	\left\{ \begin{array}{rcl}
		l \cdot \tilde{X}_1 &=& Y^*_1 \, \mathrm{mod} \, \Rspan{Y^*_2, \ldots, Y^*_d}, \\
		& \vdots \\
		l \cdot \tilde{X}_j &=& Y^*_j \, \mathrm{mod} \, \Rspan{Y^*_{j+1}, \ldots, Y^*_d}, \\
		& \vdots \\
		l \cdot \tilde{X}_d &=& Y^*_d.
	\end{array}\right.
	\end{equation}
In particular, $l \cdot G = l + l \cdot  \Lie{g} = l + \Rspan{Y^*_1, \ldots, Y^*_d}$.
		\item[(iii)] The polynomials $Q_1, \ldots, Q_d$ defined by
	\begin{align*}
		l \cdot \exp(t_d \tilde{X}_d) \cdot \ldots \cdot \exp(t_1 \tilde{X}_1) = l +  \sum_{j = 1}^d Q_j(t_d, \ldots, t_1) Y^*_j
	\end{align*}
are of the form
	\begin{align*}
		Q_j(t_j, \ldots, t_1) = t_j + \mbox{ a polynomial } \tilde{Q}_j(t_{j-1}, \ldots, t_1)
	\end{align*}
for all $j = 1, \ldots, d$.
	\end{itemize}
	\end{thm}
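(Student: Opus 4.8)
The plan is to adapt the general Chevalley--Rosenlicht theorem \cite[Thm.~3.1.4]{CorwinGreenleaf} on parametrizations of orbits of unipotent actions to the coadjoint action restricted to $\pid^*$, and then to exploit the special structure coming from the fact that $\pid$ is a polarization for an $l\in\Liez{g}^*$ satisfying the flat-orbit condition of Theorem~\ref{ThmMW}. First I would verify that $\cdot\colon\pid^*\times G\to\pid^*$ is indeed a well-defined right action by a unipotent group: since $\pid$ is an ideal, $\Ad(g)$ preserves $\pid$, so $\coAd(g\inv)$ preserves the annihilator-complement and descends to $\pid^*$; nilpotency of $G$ makes each $\Ad(g)$ unipotent, hence so is its transpose. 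This puts us in the setting where the general Chevalley--Rosenlicht machine applies, yielding a jump-index set and a polynomial cross-section.

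Next I would identify the orbit $l\cdot G$ set-theoretically. By Theorem~\ref{ThmMW} the full coadjoint orbit $\cO_l$ equals $l+\Liez{g}^\perp$, a $2d$-dimensional flat; restricting functionals to $\pid$ (equivalently, projecting along $\pid^\perp$) collapses this to the $d$-dimensional affine subspace $l+\Liez{g}^\perp|_{\pid}=l+\Rspan{Y_1^*,\dots,Y_d^*}$ inside $\pid^*$, where $\{Y_j^*\}$ is dual to the $Y$-part of the Malcev basis. The non-degeneracy of $B_l$ on $\Lie{g}/\Liez{g}$ guarantees that the stabilizer of $l$ in $G$ is exactly $\PID Z(G)$, so the orbit map factors through $\QA=\PID\rquo G$ and is a bijection onto this $d$-plane; since both sides are smooth and the map is polynomial with polynomial inverse (Chevalley--Rosenlicht), it is a diffeomorphism. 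This gives (i) once we pin down the basis $\{\tilde X_d,\dots,\tilde X_1\}$ of $\qa$.

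For (ii) and (iii) I would choose the $\tilde X_j$ by a downward recursion engineered so that the infinitesimal action is ``triangular'' with respect to the ordered dual basis $Y_1^*,\dots,Y_d^*$. Concretely, $l\cdot X$ for $X\in\qa$ is the functional $v\mapsto l([X,v])$ on $\pid$, which lies in $\Rspan{Y_1^*,\dots,Y_d^*}$; using non-degeneracy of $B_l$ one can solve, starting from the top index $d$ and working down, for representatives $\tilde X_j\in\qa$ (modified by elements already chosen) so that $l\cdot\tilde X_j\equiv Y_j^*$ modulo $\Rspan{Y_{j+1}^*,\dots,Y_d^*}$, which is exactly \eqref{ChoiceXTilde}. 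Then writing $l\cdot\exp(t_d\tilde X_d)\cdots\exp(t_1\tilde X_1)=l+\sum_j Q_j(t_d,\dots,t_1)Y_j^*$ and differentiating in $t_1$ at the identity shows the $t_1$-coefficient of $Q_j$ is $\delta_{1j}$; iterating (peeling off one exponential factor at a time and using that each further action only adds higher-index $Y^*$'s plus products of earlier $t$'s) yields $Q_j(t_j,\dots,t_1)=t_j+\tilde Q_j(t_{j-1},\dots,t_1)$. The main obstacle I anticipate is the bookkeeping in this recursion: one must show simultaneously that the $\tilde X_j$ can be chosen, that the resulting $Q_j$ depend only on $t_1,\dots,t_j$ (not on $t_{j+1},\dots,t_d$), and that the leading term is exactly $t_j$ --- all of which hinge on carefully exploiting that each $\exp(t_k\tilde X_k)$ with $k>j$ acts trivially on the $Y_j^*$-component, a consequence of the triangular choice combined with the ideal structure of $\pid$ and the bracket relations $[\qa,\qa]\subseteq\pid$.
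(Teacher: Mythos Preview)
Your overall plan is sound and matches the paper's: reduce to the general Chevalley--Rosenlicht theorem \cite[Thm.~3.1.4]{CorwinGreenleaf} for the unipotent action $\pid^*\cdot G$, identify the orbit as the projection of the flat orbit $l+\Liez{g}^\perp$ to $\pid^*$, and then pin down the $\tilde X_j$ inside $\qa$. The paper does exactly this, but it obtains (i)--(iii) by directly citing the general theorem together with the Jordan--H\"older flag induced by the Malcev basis, and then adds two points you only partly address: that the $\tilde X_j$ may be taken in $\qa$ because $\PID$ acts trivially on $l$ (since $l$ vanishes on $[\pid,\pid]$), and that they are linearly independent because the block $A=\bigl(l([X_j,Y_k])\bigr)_{j,k}$ of $[B_l]$ is invertible.

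There is, however, a genuine error in your justification of (iii). You claim the triangular form $Q_j(t_j,\dots,t_1)=t_j+\tilde Q_j(t_{j-1},\dots,t_1)$ hinges on the bracket relation $[\qa,\qa]\subseteq\pid$. This is false in general: in the Dynin--Folland group $\HG{2}{1}$ of Section~\ref{SectionExs}, the complement $\qa=\Rspan{X_1,X_2,X_3}$ is a copy of the Heisenberg algebra with $[X_2,X_3]=-X_1\in\qa$, not in $\pid$. The relation $[\qa,\qa]\subseteq\pid$ is neither assumed nor needed. The correct mechanism, which the paper uses, is the $G$-invariance of the subspaces $V_{r+j}:=\Rspan{Y_{j+1}^*,\dots,Y_d^*}\subseteq\pid^*$; these are the annihilators (in $\pid^*$) of the ideals $\Lie{h}_{r+j}=\Rspan{Z_1,\dots,Y_j}$ coming from the strong Malcev basis, hence form a Jordan--H\"older flag for the restricted action. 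Once you know $l\cdot\tilde X_k\in V_{r+k-1}\subseteq V_{r+j}$ for $k>j$ (the triangular choice \eqref{ChoiceXTilde}) and that each $V_{r+j}$ is invariant, it follows that $l\cdot\exp(t_d\tilde X_d)\cdots\exp(t_{j+1}\tilde X_{j+1})\in l+V_{r+j}$, so its $Y_1^*,\dots,Y_j^*$-components vanish; the further action by $\exp(t_j\tilde X_j)\cdots\exp(t_1\tilde X_1)$ then produces $Q_j$ depending only on $t_j,\dots,t_1$, with leading term $t_j$ because $V_{r+j-1}/V_{r+j}$ is one-dimensional and hence carries the trivial action. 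Your ``peeling off'' recursion can be made to work, but it must be driven by this flag invariance rather than by any hypothesis on $[\qa,\qa]$. A minor related point: differentiating in $t_1$ at the identity does \emph{not} give the $t_1$-coefficient of $Q_j$ directly, since $\exp(t_1\tilde X_1)$ acts last and hence on an already-displaced functional, not on $l$ itself.
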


	\begin{proof}
Since $l \to l\cdot g$ is the restriction of
$\mathrm{Ad}^*(g\inv ) l$ to $\pid ^*$ and $\pid $ is an ideal, $\pid
^*$ is an invariant subspace of $\mathrm{Ad}^*$, and the restriction
of a unipotent action remains unipotent.

%
%
To prove (i) --- (iii), we apply \cite[Thm.~3.1.4]{CorwinGreenleaf} to $\pid^* \cdot
G$.
Let $l \in \Liez{g}^*$ be such that  $\pid$ is a polarization for
$l$. Flatness of the orbit $l \cdot G$ follows from that fact that $l
\cdot G$ is the image under the projection $\Lie{g}^* \to \pid^* =
\Lie{g}^*/\pid ^\perp $ of
the coadjoint orbit $\coAd(G) l = l + \Liez{g}^\perp \subseteq
\Lie{g}^*$.  


  Recall that for any strong Malcev
basis $\{Z_1, \ldots, X_d \}$ of $\Lie{g}$ the sequence of subspaces
$\Rspan{Z^*_1, \ldots, X^*_d} \subgr \Rspan{Z^*_2, \ldots, X^*_d}
\subgr \ldots \subgr \Lie{g}^*$ forms a Jordan-H\"{o}lder flag for the
coadjoint action of $G$. This property is then inherited by $\pid^*
\cdot G$ if the Malcev basis passes through $\Lie{z}^*$ and $\pid^*$:
If we quotient the vector space $\Lie{g}^*$ and, a fortiori, $G \to
\Aut(\Lie{g}^*): g \mapsto \coAd(g^{-1})$ by the subspace $\qa^*$,
then $\Rspan{Z^*_1, \ldots, Y^*_d} \subgr \Rspan{Z^*_2, \ldots, Y^*_d}
\subgr \ldots \subgr \pid^*$ forms a Jordan-H\"{o}lder flag for
$\pid^* \cdot G$. So  \cite[Thm.~3.1.4]{CorwinGreenleaf} yields sufficiently many vectors $\tilde{X}_j \in
\Lie{g}$ such that their combined action generates $l \cdot G$. The particular choice of $\{ \tilde{X}_j \}_j$ in the proof of \cite[Thm.~3.1.4]{CorwinGreenleaf} yields (i) --- (iii) except for the fact that these vectors form a basis of $\qa$.

To show this, let $Y, v \in \pid, X \in \qa$. Then
$\Ad(\exp(X))v \in \pid \nsubgr \Lie{g}$ and 
	\begin{align*}
		\bracket{l}{\Ad\bigl(\exp(Y) \exp(X) \bigr) v} &= \bracket{l}{\Ad(\exp(Y)) \bigl(\Ad(\exp(X))v \bigr)} \\
		&= \bracket{l}{\Ad(\exp(X))v + [Y, \Ad(\exp(X))v] + \ldots} \\
		&= \bracket{l}{\Ad(\exp(X))v}
	\end{align*}
since $l$ vanishes on $[\pid, \pid]$. Thus, the contribution of $\PID$
to $l \cdot G$ is trivial and we can choose the $\tilde{X}_j$ to be in $ \qa$. 
The linear independence of the $\tilde{X}_j$ becomes evident in view of the symplectic form $B_l$. The matrix representation of $B_l$ with respect to the basis $\{Z_1, \ldots, X_d \}$ is of the form
	\begin{align}
	[B_l]
	=
	\left[\begin{array}{c|ccc|ccc}
		l([\,.\,, \,.\,])&Y_1&\cdots &Y_d&X_1&\cdots&X_d \\ \hline
		Y_1&&&&*&\cdots&* \\
		\vdots&&\textnormal{\Large{0}}&&\vdots&\iddots&\vdots \\
		Y_d&&&&*&\cdots&* \\ \hline
		X_1&*&\cdots&*&*&\cdots&* \\
		\vdots&\vdots&\iddots&\vdots&\vdots&\iddots&\vdots \\
		X_d&*&\cdots&*&*&\cdots&* \\
	\end{array}\right]
		= 
	\left[\begin{array}{c|c}
		0&-A^* \\ \hline
		A&B \\
	\end{array}\right]. \label{B_l}
	\end{align}
Since $B_l$ is non-degenerate (Theorem~\ref{ThmMW}), the matrix $A$ is
invertible; so the infinitesimal action restricted to $\qa$ has full
rank and the vectors $\tilde{X}_d, \ldots, \tilde{X}_1$ must form a basis of $\qa$.
	\end{proof}


The following corollary is an immediate consequence of Theorem~\ref{AdaptedChevalleyRosenlicht} (ii).

	\begin{cor} \label{CorChR} 
The change of basis from $\{Z_1, \ldots, Z_r, Y_1, \ldots, Y_d, X_1, \ldots, X_d \}$ \newline to $\{Z_1, \ldots, Z_r, Y_1, \ldots, Y_d, \tilde{X}_d, \ldots, \tilde{X}_1 \}$ in Theorem~\ref{AdaptedChevalleyRosenlicht} (i) --- (ii) yields the matrix representation of the symplectic form

	\begin{align}
	\widetilde{[B_l]}
	=
	\left[\begin{array}{c|ccc|ccc}
		l([\,.\,, \,.\,])&Y_1&\cdots &Y_d&\tilde{X}_d&\cdots&\tilde{X}_1 \\ \hline
		Y_1&&&&&&-1 \\
		\vdots&&\textnormal{\Large{0}}&&&\iddots&* \\
		Y_d&&&&-1&*&* \\ \hline
		\tilde{X}_d&&&1&*&\cdots&* \\
		\vdots&&\iddots&*&\vdots&\iddots&\vdots \\
		\tilde{X}_1&1&*&*&*&\cdots&* \\
	\end{array}\right]
	=
	\left[\begin{array}{c|c}
		0&-\tilde{A}^* \\ \hline
		\tilde{A}&B \\
	\end{array}\right] . \label{tildeB_l}
	\end{align}

\qed
	\end{cor}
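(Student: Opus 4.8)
The plan is to notice that the corollary is pure linear algebra once Theorem~\ref{AdaptedChevalleyRosenlicht} is in hand: both $[B_l]$ in \eqref{B_l} and $\widetilde{[B_l]}$ in \eqref{tildeB_l} are matrix representations of one and the same alternating form $B_l(\,\cdot\,,\,\cdot\,)=l([\,\cdot\,,\,\cdot\,])$ on $\Lie{g}/\Liez{g}$, written with respect to two bases that differ only inside $\qa$ (the Malcev vectors $X_1,\dots,X_d$ are replaced by the Chevalley--Rosenlicht vectors $\tilde X_d,\dots,\tilde X_1$, which by Theorem~\ref{AdaptedChevalleyRosenlicht}(i) again form a basis of $\qa$). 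So the entire task is to read the block pattern of $\widetilde{[B_l]}$ off from part (ii) of that theorem.

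First I would record the bridge between $B_l$ and the infinitesimal action appearing in Theorem~\ref{AdaptedChevalleyRosenlicht}: for $X\in\Lie{g}$ and $v\in\pid$ one has
\[
\langle l\cdot X,\,v\rangle=\tfrac{d}{dt}\Big|_{t=0}\langle l,\Ad(\exp tX)v\rangle=l([X,v])=B_l(X,v).
\]
Since $Z_1,\dots,Z_r$ are central, $B_l(Z_i,\,\cdot\,)\equiv 0$, so it suffices to describe the $2d\times 2d$ matrix on $\Lie{g}/\Liez{g}$ with (the images of) $Y_1,\dots,Y_d$ and $\tilde X_d,\dots,\tilde X_1$ indexing rows and columns. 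The upper-left $d\times d$ block is zero because $\pid$ is a polarization for $l$, hence $l([\pid,\pid])=0$ and in particular $B_l(Y_i,Y_j)=0$ for all $i,j$.

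Next I would compute the lower-left block $\tilde A$. Pairing the relation $l\cdot\tilde X_j=Y^*_j\ \mathrm{mod}\ \Rspan{Y^*_{j+1},\dots,Y^*_d}$ from \eqref{ChoiceXTilde} against $Y_i$ and using the bridge above yields $B_l(\tilde X_j,Y_i)=\delta_{ij}$ for $i\le j$, while for $i>j$ the value is not prescribed (these are the $*$'s). With the rows ordered $\tilde X_d,\tilde X_{d-1},\dots,\tilde X_1$ and the columns ordered $Y_1,\dots,Y_d$, this says precisely that $\tilde A$ has $1$'s on the anti-diagonal, zeros above it, and arbitrary entries below it; in particular $\tilde A$ is invertible with $\det\tilde A=\pm 1$ (one checks the only surviving Leibniz term is the reversal permutation, so $\det\tilde A=(-1)^{d(d-1)/2}$). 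The upper-right block is then $-\tilde A^*$ by antisymmetry of $B_l$, and the lower-right block is simply the alternating matrix $B=(B_l(\tilde X_i,\tilde X_j))_{i,j}$, on which nothing further is asserted. Assembling the four blocks gives \eqref{tildeB_l}; as a by-product, a block-column elimination on this matrix gives $\det\widetilde{[B_l]}=(\det\tilde A)^2=1$, which is what makes the corollary convenient for the later Pfaffian and volume computations.

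I do not expect a genuine obstacle here — this is exactly why the statement is a corollary. The one place requiring care is the index bookkeeping: one must check that the ``$j\ge i$'' upper-triangular pattern coming out of \eqref{ChoiceXTilde} becomes, after pairing the descending list $\tilde X_d,\dots,\tilde X_1$ against the ascending list $Y_1,\dots,Y_d$, the reversed-diagonal (``$\iddots$'') pattern displayed in \eqref{tildeB_l}, and that the normalization on that anti-diagonal is exactly $1$ (equivalently $-1$ in the transposed block $-\tilde A^*$).
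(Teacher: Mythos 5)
Your argument is correct and coincides with what the paper intends: the authors state the corollary as an immediate consequence of Theorem~\ref{AdaptedChevalleyRosenlicht}(ii) and give no written proof, and your proposal simply fleshes out that reading — the identification $\langle l\cdot X, v\rangle = l([X,v])$, the vanishing of the $Y$-block from $l([\pid,\pid])=0$, the anti-triangular shape of $\tilde A$ from \eqref{ChoiceXTilde}, and antisymmetry for the remaining block. The index bookkeeping (descending $\tilde X_d,\dots,\tilde X_1$ against ascending $Y_1,\dots,Y_d$ producing the anti-diagonal of $1$'s) and the by-product $\det\widetilde{[B_l]}=(\det\tilde A)^2=1$ are exactly what the paper uses later in Proposition~\ref{Subgroup}.
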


Note  that the vectors $\tilde{X}_d, \ldots, \tilde{X}_1$ are not
uniquely determined. According to \eqref{ChoiceXTilde} one has the liberty to add contributions of higher index in each step: $\tilde{X}_d$ is fixed by the choice of $l$ and
$Y_d$, whereas one can add some multiple of $\tilde{X}_d$ to
$\tilde{X}_{d-1}$ without changing the assertions of the theorem, and
so forth. The triangularization of  $A$ can be interpreted as the first
step of an algorithm which produces the coefficient polynomials $Q_j,
j =1, \ldots, d$, from Theorem~\ref{AdaptedChevalleyRosenlicht} (iii).

In what follows we would like to use the basis $\{ Z_1, \ldots, Y_d,
\tilde{X}_d, \ldots, \tilde{X}_1 \}$ to construct a
quasi-lattice or a uniform subgroup in $G$. According to Lemma~\ref{LemmaQL} we
need a Malcev basis, not just a vector space basis, for this
construction.  This motivates the following definition.

	\begin{dfn}
Let $G$, $\pid$, $\qa$ be as above and let $l \in \Liez{g}^*$. Let $\{
Z_1, \ldots, Y_d\}$ be  a strong Malcev basis  of $\pid$
passing through $\Liez{g}$ and let  $\{ \tilde{X}_d, \ldots,
\tilde{X}_1 \}$  be a basis of $\qa$  satisfying the conclusions (i)
and (ii) of Theorem~\ref{AdaptedChevalleyRosenlicht} with respect to $l$.
If the  union $\{ Z_1, \ldots, Y_d, \tilde{X}_d, \ldots, \tilde{X}_1
\}$ forms a strong Malcev basis of $\Lie{g}$,  it is  called 
a Chevalley-Rosenlicht-admissible (or Ch-R-admissible) Malcev basis of
$\Lie{g}$ subordinate to $l$. 
	\end{dfn}

In this paper Chevalley-Rosenlicht admissible Malcev bases are simply a
technical tool. At this time we do not know whether and when a
Ch-R-admissible basis exists in a nilpotent Lie algebra. In
Subsection~\ref{GrSIZGrSubs} we will see that every graded Lie group with
one-dimensional center admits a Ch-R-admissible basis. 

Since the basis of the complement $\Lie{h}$ in
Theorem~\ref{AdaptedChevalleyRosenlicht} is not unique, the first
question is whether the definition of Ch-R-admissible bases depends on
the choice of the basis. Fortunately this is not the case. 

\begin{lem} \label{lem-chrad}
Assume that $\ggl $ possesses a Ch-R-admissible Malcev basis
 subordinate to $l\in \Liez{g}^*$. Then every basis satisfying
 \eqref{ChoiceXTilde} is Ch-R-admissible.
\end{lem}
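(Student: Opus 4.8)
The plan is to show that if $\{ Z_1, \ldots, Y_d, \tilde{X}_d, \ldots, \tilde{X}_1 \}$ is a known Ch-R-admissible Malcev basis subordinate to $l$, and $\{ Z_1, \ldots, Y_d, \tilde{X}'_d, \ldots, \tilde{X}'_1 \}$ is any basis with the same $\pid$-part $\{ Z_1, \ldots, Y_d \}$ and with $\{ \tilde{X}'_d, \ldots, \tilde{X}'_1 \} \subseteq \qa$ satisfying \eqref{ChoiceXTilde}, then the latter is also a strong Malcev basis. The key observation, already recorded in the remark following Corollary~\ref{CorChR}, is that any two bases of $\qa$ satisfying \eqref{ChoiceXTilde} differ by a \emph{unipotent upper-triangular} change of coordinates: reading \eqref{ChoiceXTilde} off in the order $\tilde{X}_d, \tilde{X}_{d-1}, \ldots, \tilde{X}_1$, the vector $\tilde{X}_j$ is determined modulo $\Rspan{\tilde{X}_{j+1}, \ldots, \tilde{X}_d}$ (since the infinitesimal action on $\qa$ is given by the invertible triangular matrix $\tilde A$ of \eqref{tildeB_l}, a multiple of $\tilde X_k$, $k>j$, contributes only to the $Y_k^*$-coordinates with $k>j$ and hence does not spoil the congruence for $\tilde X_j$). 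Thus there are constants $c_{jk}$, $k > j$, with
\begin{align*}
	\tilde{X}'_j = \tilde{X}_j + \sum_{k > j} c_{jk}\, \tilde{X}_k, \qquad j = d, d-1, \ldots, 1.
\end{align*}

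First I would make precise what has to be checked. Writing the known Ch-R-admissible basis as $(W_1, \ldots, W_n) = (Z_1, \ldots, Z_r, Y_1, \ldots, Y_d, \tilde{X}_d, \ldots, \tilde{X}_1)$ and the new one as $(W'_1, \ldots, W'_n)$, we have $W'_i = W_i$ for $i \le r + d$ (the $\pid$-part), and for the last $d$ entries the relation above, which in the ordering $(W_{r+d+1}, \ldots, W_n) = (\tilde X_d, \ldots, \tilde X_1)$ reads: $W'_i$ is $W_i$ plus a linear combination of $W_{r+d+1}, \ldots, W_{i-1}$ — that is, of strictly \emph{earlier} basis vectors among the $\qa$-block. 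Hence the transition matrix from $(W_i)$ to $(W'_i)$ is lower-triangular with $1$'s on the diagonal, so in particular $\Rspan{W'_1, \ldots, W'_m} = \Rspan{W_1, \ldots, W_m}$ for every $1 \le m \le n$.

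Then the verification is immediate: for each $m$, the subspace $\Rspan{W'_1, \ldots, W'_m}$ equals $\Rspan{W_1, \ldots, W_m}$, which is an ideal of $\Lie{g}$ because $(W_i)$ is a strong Malcev basis. So $(W'_i)$ satisfies condition (i) of Lemma~\ref{StrongMB}, i.e.\ it is again a strong Malcev basis of $\Lie{g}$; and by hypothesis its $\qa$-part satisfies \eqref{ChoiceXTilde} and its $\pid$-part is the given strong Malcev basis of $\pid$ through $\Liez{g}$, so it meets the defining conditions of a Ch-R-admissible Malcev basis subordinate to $l$. The main (and only genuine) obstacle is justifying carefully the claim that two bases of $\qa$ both satisfying \eqref{ChoiceXTilde} differ by a \emph{triangular} change with respect to the index ordering — one must check that adjusting $\tilde X_j$ by multiples of the higher-index vectors is exactly the freedom left by \eqref{ChoiceXTilde} and introduces no dependence on lower-index vectors; this is where the structure of $\tilde A$ in \eqref{tildeB_l} (upper-triangular, invertible) is used, together with the fact that the $\pid$-part is held fixed so no $\pid$-contributions enter. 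Everything else is bookkeeping about nested spans.
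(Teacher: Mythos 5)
Your proposal is correct and follows essentially the same route as the paper: both arguments reduce to showing that two bases of $\qa$ satisfying \eqref{ChoiceXTilde} differ by a unipotent triangular change of coordinates compatible with the nesting order (the paper obtains this as $C=\tilde{\tilde{A}}\tilde{A}^{-1}$ from the triangular matrices in \eqref{tildeB_l}, you via the equivalent observation that \eqref{ChoiceXTilde} determines $\tilde{X}_j$ only modulo $\Rspan{\tilde{X}_{j+1},\ldots,\tilde{X}_d}$), whence all initial spans, and hence the ideals defining the strong Malcev property, coincide.
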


\begin{proof}
Let $\{ \tilde{X}_d, \ldots, \tilde{X}_1 \}$ and $\{ \tilde{\tilde{X}}_d, \ldots, \tilde{\tilde{X}}_1 \}$ be two bases of $\qa$ to parametrize the orbit $l \cdot G$ in $\pid^* \cdot G$. According to \eqref{tildeB_l} this means that the $d \times d$-matrix $\tilde{A}$ with entries $\tilde{A}_{j,k} := l([\tilde{X}_j, Y_k])$ is an upper triangular matrix with ones on the diagonal. Likewise, $\tilde{\tilde{A}}$, with $\tilde{\tilde{A}}_{j,k} := l([\tilde{\tilde{X}}_j, Y_k])$, is upper triangular. Consequently, $C := \tilde{\tilde{A}} \tilde{A}^{-1}$ is upper triangular. In fact, the matrix $C$ describes the basis change from $\{ \tilde{X}_j \}_{j=1}^d$ to $\{ \tilde{\tilde{X}}_j \}_{j=1}^d$, because for $\tilde{\tilde{X}}_j = \sum_{n \geq j} c_{j, n} \hspace{2pt} \tilde{X}_n$ we have
	\begin{align}
		l([\tilde{\tilde{X}}_j, Y_k]) &= \sum_{n \geq j} c_{j, n} \hspace{2pt} l([\tilde{X}_n, Y_k]) \hspace{15pt} \mbox{ or equivalently} \nn \\
		\tilde{\tilde{A}} &= C \tilde{A}. \label{MatrixC}
	\end{align}

Now, assume that $\{ Z_1, \ldots, Y_d, \tilde{X}_d, \ldots, \tilde{X}_1 \}$ is a strong Malcev basis for $\Lie{g}$. Then for $j < k$ we have $[\tilde{X}_j, \tilde{X}_k] \in \Rspan{Z_1, \ldots, Y_d, \tilde{X}_d, \ldots, \tilde{X}_k} := \Lie{g}_k$, or $[\tilde{X}_j, \tilde{X}_k] \in \Lie{g}_{\max (j, k)}$. Since $C$ is upper triangular, we find that $\tilde{\tilde{X}}_k = \sum_{m \geq k} c_{k, m} \hspace{2pt} \tilde{X}_m \in \Lie{g}_k$ and consequently, $\Lie{g}_k = \Rspan{Z_1, \ldots, Y_d, \tilde{\tilde{X}}_d, \ldots, \tilde{\tilde{X}}_k}$.
We check the Lie bracket for the basis $\{ \tilde{\tilde{X}}_j \}_{j=1}^d$. For $j <k$ we have
	\begin{align*}
		[\tilde{\tilde{X}}_j, \tilde{\tilde{X}}_k] = \sum_{n
                  \geq j} \sum_{m \geq k} c_{j, n} \hspace{2pt} c_{k,
                  m} \hspace{2pt} [\tilde{X}_n, \tilde{X}_m] \, ,
	\end{align*}
and therefore $[\tilde{\tilde{X}}_j, \tilde{\tilde{X}}_k] \in \Lie{g}_k$.
This implies that $\{ Z_1, \ldots, Y_d, \tilde{\tilde{X}}_d, \ldots, \tilde{\tilde{X}}_1 \}$ is a strong Malcev basis for $\Lie{g}$.

Clearly, the argument is symmetric. So, $\{ Z_1, \ldots, Y_d, \tilde{X}_d, \ldots, \tilde{X}_1 \}$ is Ch-R-admissible subordinate to $l$ if and only if $\{ Z_1, \ldots, Y_d, \tilde{\tilde{X}}_d, \ldots, \tilde{\tilde{X}}_1 \}$ is so.
\end{proof}

Once we know that $\Lie{g}$  possesses  a Ch-R-admissible basis subordinate to $l$, we have a certain freedom to choose. A distinguished Ch-R-admissible basis is obtained by choosing the basis transformation to be $C = \tilde{A}^{-1}$ in \eqref{MatrixC}. Then $\tilde{\tilde{A}} = C \tilde{A} = \tilde{A}^{-1} \tilde{A} = I$ or, in other words,
	\begin{align}
		\tilde{\tilde{A}}_{j, k} &= l([\tilde{\tilde{X}}_j, Y_k]) = \delta_{j, k}, \hspace{10pt} j, k = 1, \ldots, d, \hspace{10pt} \mbox{ or } \nn \\
		\tilde{\tilde{A}} &=
	\left[\begin{array}{ccc}
		\textnormal{\Large{0}}&&1 \\
		&\iddots& \\
		1&&\textnormal{\Large{0}}
	\end{array}\right]. \label{tildeAdist}
	\end{align}
Thus, if the action $\pid^* \times G \to \pid^*$ possesses a Ch-R-admissible basis subordinate to $l$, then it possesses a distinguished Ch-R-admissible basis that diagonalizes the infinitesimal coadjoint action on $l$.

\subsection{A Technical Version of the Main Theorem} \label{ProofMainThmSubs}
We formulate and prove a general, but technical version for the
existence of orthonormal bases associated to a representation $\pi \in
SI/Z(G)$. 

	\begin{thm} 
          \label{TVMainThm}
Let $G$ be a connected, simply connected nilpotent $SI/Z$-group of
dimension $\dimG = r + 2d$ with $(r+d)$-dimensional ideal $\pid
\nsubgr \Lie{g}$. Fix a strong Malcev basis \newline $\{Z_1, \ldots, Z_r, Y_1,
\ldots, Y_d, X_1, \ldots, X_d \}$ of $\Lie{g}$ which passes through
$\Liez{g}$ and $\pid$; use it to fix the Haar measures $\mu_{G}$ and
$\mu_{Z(G)}$. Let $\pi \in SI/Z(G)$ with representative $l \in
\Liez{g}^*$. 

(A) If 
	\begin{itemize}
		\item[(i)] $\pid$ is a polarization for $l$,
		\item[(ii)] $\Lie{g}$ has a Ch-R-admissible Malcev basis subordinate to $l$ of the form \newline $\{ Z_1, \ldots, Y_d, \tilde{X}_d, \ldots, \tilde{X}_1 \}$,
	\end{itemize}
then there exist a discrete subset $\Gamma \subseteq G/Z(G)$ and 
 a relatively compact subset $F \subseteq M \rquo G$  
such that 
the set 
	\begin{align*}
		\bigl\{ \mu _{M \rquo G}(F)^{-1/2} \hspace{2pt} \pi(\gamma) \, 1_{\PFD} \mid \gamma \in \Gamma \bigr\}
	\end{align*}
forms an orthonormal basis of $\L{2}{\PID \rquo G}$. 

(B) If furthermore
	\begin{itemize}
		\item[(iii)] $\Lie{g}_\Q := \Qspan{Z_1, \ldots, X_d}$ forms a rational subalgebra of $\Lie{g}$,
		\item[(iv)] $l \in {\Lie{z}(\Lie{g}_\Q)}^*$,
		\item[(v)] $\Lie{g}_\Q$ possesses a Ch-R-admissible basis $\{ Z_1, \ldots, Y_d, \tilde{X}_d, \ldots, \tilde{X}_1 \}$ subordinate to $l$,
	\end{itemize}
then there exist a uniform subgroup $\Gamma \subgr G/Z(G)$ of
co-volume $\Gmeas{G/Z(G)}{\Sigma} = \fd^{-1}$ with fundamental domain
$\Sigma $, such that for $\PFD := q(\Sigma) \subseteq \PID \rquo G$ the set 
	\begin{align} \label{saturd}
		\bigl\{  \mu _{\PID \rquo G }(F) ^{-1/2} \hspace{2pt} \pi(\gamma) \, 1_\PFD \mid \gamma \in \Gamma \bigr\}
	\end{align}
forms an orthonormal basis of $\L{2}{\PID \rquo G}$. Furthermore,
$\Norm{\L{2}{\PID \rquo G}}{1_\PFD} = C^{-1} \hspace{2pt} \fd^{1/2}$
for some integer $C\in \N $. 
	\end{thm}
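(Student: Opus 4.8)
The plan is to imitate the Gabor computation of Example~\ref{MainThmEuclVersion}: I will use the distinguished Ch-R-admissible basis to decompose $\pi$ into \emph{modulation} operators coming from the ideal $\pid$ (which act by unimodular multipliers on $\L{2}{\PID\rquo G}$) and \emph{translation} operators coming from the complement $\qa=\Rspan{\tilde{X}_d,\dots,\tilde{X}_1}$ (which move the argument in $\PID\rquo G$ up to a unimodular cocycle), and then exhibit the orthonormal basis as the orbit of $1_F$ under the product of a modulation lattice and a translation quasi-lattice. First I fix the distinguished Ch-R-admissible basis $\{Z_1,\dots,Z_r,Y_1,\dots,Y_d,\tilde{X}_d,\dots,\tilde{X}_1\}$ subordinate to $l$; it exists by hypothesis~(ii), and by Lemma~\ref{lem-chrad} together with \eqref{tildeAdist} it may be normalized so that $l([\tilde{X}_j,Y_k])=\delta_{j,k}$. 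Then I realize $\pi$ on $\L{2}{\PID\rquo G}$ through the cross-section $\sigma$ determined by these Malcev coordinates, so that $\PID\rquo G\cong\rd$ with coordinates $t=(t_1,\dots,t_d)$ read off from $h'=\exp(t_d\tilde{X}_d)\cdots\exp(t_1\tilde{X}_1)$; by Lemma~\ref{LemmaQL} and Corollary~\ref{QLQuoGr} applied to this strong Malcev basis, the quasi-lattice $\Lambda_h:=\{\exp(k_d\tilde{X}_d)\cdots\exp(k_1\tilde{X}_1)\mid k\in\Z^d\}$ has a projected fundamental domain $F\subseteq\PID\rquo G$ that, in these coordinates, is a cube. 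Finally I take the modulation lattice $\Lambda_m:=\{\exp(s_1Y_1)\cdots\exp(s_dY_d)\mid s\in\Z^d\}\subseteq\PID$ and let $\Gamma\subseteq G/Z(G)$ be the image of $\Lambda_m\cdot\Lambda_h$ (a quasi-lattice of $G/Z(G)$, up to the rescaling needed in~(B)).

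The heart of the matter is to make the phase in \eqref{FormulaRep} explicit. For $m\in\PID$, \eqref{FormulaRep} shows that $\pi(m)$ is multiplication by the unimodular function $q(h')\mapsto e^{2\pi i\bracket{\coAd(h'^{-1})l}{\log m}}$, while for $h=\sigma(q(h))$ in the cross-section $\pi(h)$ sends $q(h')\mapsto q(h'h)$ multiplied by the unimodular cocycle $e^{2\pi i\bracket{l}{\log p(h'h)}}$. Writing $l\cdot h'$ for $\coAd(h'^{-1})l$ restricted to $\pid^*$, Theorem~\ref{AdaptedChevalleyRosenlicht}~(ii)--(iii) with the distinguished basis gives $l\cdot h'=l+\sum_{j=1}^dQ_j(t)\,Y^*_j$ with $Q_j(t_j,\dots,t_1)=t_j+\tilde{Q}_j(t_{j-1},\dots,t_1)$; and since $l$ vanishes on $[\pid,\pid]$ and $\chi_l$ is a character of $\PID$, one has $\bracket{l}{\log m}\equiv\sum_js_j\,l(Y_j)\pmod{\Z}$ for $m=\exp(s_1Y_1)\cdots\exp(s_dY_d)$, hence $\bracket{l\cdot h'}{\log m}\equiv\sum_js_j\,l(Y_j)+\sum_{j=1}^dQ_j(t)\,s_j\pmod{\Z}$. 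Consequently, if $\gamma=mh$ has translation part $h\in\Lambda_h$ (so $\pi(h)1_F$ is supported on the cell $c_h$ to which $h$ carries $F$) and modulation part $s\in\Z^d$, then $\pi(\gamma)1_F=\pi(m)\pi(h)1_F$ equals, on $c_h$, a unimodular function of $t$ independent of $s$ times $e^{2\pi i\sum_jQ_j(t)s_j}$, and is zero off $c_h$.

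From here orthogonality and completeness both follow from the triangularity of the $Q_j$. Two functions with distinct translation parts are orthogonal, having disjoint supports; for a common translation part, cancelling the shared unimodular factor and the fixed linear phase reduces the inner product to $\int_{[0,1)^d}e^{2\pi i\sum_jQ_j(t)(s_j-s'_j)}\,dt$, and the substitution $u_j=Q_j(t_j,\dots,t_1)$ (a triangular polynomial map of Jacobian $1$) turns this into $\int_Re^{2\pi i\bracket{u}{s-s'}}\,du$ over the ``sheared cube'' $R=Q([0,1)^d)$; integrating out $u_d,u_{d-1},\dots$ in turn and using $s_j-s'_j\in\Z$, each integral over a unit-length interval equals $\delta_{s_j,s'_j}$ regardless of the lower-index-dependent endpoints, so the whole thing is $\prod_j\delta_{s_j,s'_j}$. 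For completeness, $\L{2}{\PID\rquo G}=\bigoplus_{\text{cells }c}\L{2}{c}$ by Corollary~\ref{QLQuoGr}, and on each cell the family $\{e^{2\pi i\sum_jQ_j(t)s_j}\mid s\in\Z^d\}$ is complete: in $u$-coordinates this is completeness of $\{e^{2\pi i\bracket{u}{s}}\mid s\in\Z^d\}$ in $\L{2}{R}$, which holds because $R=Q([0,1)^d)$ tiles $\rd$ under $\Z^d$ (given $u$, the equations $Q(t)+k=u$ have a unique solution $t\in[0,1)^d$, $k\in\Z^d$, read off coordinate by coordinate from the triangular form); multiplying by the $s$-independent cocycle preserves completeness, and summing over cells proves~(A). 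For~(B), under the rational hypotheses one rescales the $\tilde{X}_j$ (hence $F$ and $\Lambda_h$) by a sufficiently large integer $C$ so that $\Lambda_m\cdot\Lambda_h$ descends to a genuine uniform subgroup of $G/Z(G)$ with integral structure constants; and the value of $\mu_{\PID\rquo G}(F)$ --- equivalently of $\Norm{\L{2}{\PID\rquo G}}{1_F}$ and of the covolume of $\Sigma$ in $G/Z(G)$ --- is obtained by tracking the Haar-measure normalization~\eqref{NormHaar} through the change of basis to the Ch-R basis and invoking $\fd=\Pf(l)$ from~\eqref{pff}.

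The step I expect to be the main obstacle is making the coadjoint phase explicit as above: showing that $\bracket{\coAd(h'^{-1})l}{\log m}$ splits \emph{exactly} into an $s$-independent phase plus $\sum_jQ_j(t)s_j$ with the $Q_j$ triangular. This is precisely what the distinguished Ch-R-admissible basis is engineered to provide (Theorem~\ref{AdaptedChevalleyRosenlicht} and~\eqref{tildeAdist}), but carrying it out cleanly requires care with the Baker--Campbell--Hausdorff series, with the central directions, and with the explicit cross-section realizing $\PID\rquo G$. Once it is in place, the triangularity of the $Q_j$ drives the orthogonality computation and the ``sheared cube tiles $\rd$'' argument for completeness almost mechanically, leaving only the measure bookkeeping for the normalizing constants.
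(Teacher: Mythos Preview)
Your strategy is the paper's strategy: split $\pi(\gamma)=\pi(\vartheta)\pi(\eta^{-1})$ into a modulation from $\pid$ and a translation from $\qa$, use disjoint supports for distinct $\eta$, and reduce each cell to a Fourier-type computation. Your observation that $m\mapsto e^{2\pi i\bracket{l\cdot h'}{\log m}}$ is a character on $\PID$ (because $l\cdot h'$ vanishes on $[\pid,\pid]$, $\pid$ being an ideal and a polarization) is exactly the paper's identity~\eqref{Mult}, and your ``sheared cube tiles $\rd$'' argument is a clean variant of the iterative Fourier argument in Proposition~\ref{Completeness}.

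There is, however, a genuine gap in your treatment of cells $c_\eta$ with $\eta\neq e$. The modulation phase $e^{2\pi i\sum_jQ_j(t)s_j}$ is correct when $t$ is the \emph{global} coordinate of the running point $q(h')\in c_\eta$, but then the domain of integration is $c_\eta$, not $[0,1)^d$. Your claimed reduction to $\int_{[0,1)^d}e^{2\pi i\sum_jQ_j(t)(s_j-s'_j)}\,dt$ is unjustified: when $\PID\rquo G$ is non-Abelian (as in the Dynin--Folland example of Section~\ref{SectionExs}) the cell $c_\eta=Fq(\eta)$ is not a translated cube in $t$-coordinates, and after your substitution $u=Q(t)$ the image $Q(c_\eta)$ is not obviously a $\Z^d$-fundamental domain. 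So neither your orthogonality computation nor your tiling-based completeness applies off the identity cell. The paper repairs this by first changing variables $h\mapsto h\eta^{-1}$ so that one integrates over $F$ (coordinates $\tau\in[0,1)^d$), and then computing $l\cdot(h'\eta)$ explicitly via Lemma~\ref{PolyQL} and Theorem~\ref{AdaptedChevalleyRosenlicht}; the upshot (see~\eqref{ExpLHSCoord}) is that the coefficient of $Y^*_j$ is still of the triangular form $\tau_j+(\text{polynomial in }\tau_{<j}\text{ and }\eta)$. Once you have that, your sheared-cube argument goes through cell by cell. A smaller point on part~(B): rescaling only the $\tilde X_j$ will not produce a uniform subgroup while preserving $l([\tilde X_j,Y_k])=\delta_{jk}$; the paper rescales the $Y_j$ and $\tilde X_j$ by reciprocal powers of $K$ (see~\eqref{satur2}) precisely to keep the symplectic pairing intact while making the multiplication polynomials integer-valued.
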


The theorem will be  proved in three steps: First we  construct a
quasi-lattice or a uniform subgroup in $G/Z(G)$, then we  verify the
orthonormality of the set \eqref{saturd}, and finally we show its completeness.

	\begin{prop} \label{Subgroup} [Step 1. Construction of Quasi-Lattice/Uniform Subgroup.]

\noindent Assume that $G$ satisfies  the assumptions of
Theorem~\ref{TVMainThm}. 
	\begin{itemize}
		\item[(i)] If $G$ satisfies conditions (i) and (ii),
                  then  there exists a quasi-lattice $\Gamma'
                  \subseteq G/Z(G)$ of co-volume $\mu_{G/Z(G)}(\Sigma)
                  = \fd^{-1}$. 
		\item[(ii)] If   $G$ satisfies all  conditions (i) ---
                  (v), then there exists a uniform subgroup $\Gamma'
                  \subgr G/Z(G)$ of co-volume $\mu_{G/Z(G)}(\Sigma) =
                  \fd^{-1}$. 
	\end{itemize}
	\end{prop}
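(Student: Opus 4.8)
\emph{The plan is} to run everything through the distinguished Chevalley--Rosenlicht-admissible Malcev basis. Hypothesis (i) makes Theorem~\ref{AdaptedChevalleyRosenlicht} applicable, so the coadjoint orbit of $l$ inside $\pid^*$ is the $d$-plane $l+\Liez g^\perp$ and carries a complementary basis $\{\tilde X_d,\ldots,\tilde X_1\}$ of $\qa$; hypothesis (ii) together with Lemma~\ref{lem-chrad} and the normalization leading to \eqref{tildeAdist} then lets me fix a \emph{distinguished} Ch-R-admissible Malcev basis $\mathcal B=\{Z_1,\ldots,Z_r,Y_1,\ldots,Y_d,\tilde X_d,\ldots,\tilde X_1\}$ of $\Lie g$ passing through $\Liez g$ and $\pid$, for which the matrix $\tilde A$ with $\tilde A_{j,k}=l([\tilde X_j,Y_k])$ from \eqref{tildeB_l} is the anti-identity of \eqref{tildeAdist}. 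In case~(B), hypotheses (iii)--(v) let me take $\mathcal B$ inside $\Lie g_\Q$ and spanning it over $\Q$.

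With $\mathcal B$ in hand I would apply Lemma~\ref{LemmaQL} to obtain a quasi-lattice $\Gamma_0\subseteq G$ with relatively compact fundamental domain $\Sigma_0$, and then Corollary~\ref{QLQuoGr} --- which applies since $\mathcal B$ passes through the ideal $\Liez g$ --- to conclude that $\Gamma':=\pr(\Gamma_0)$ is a quasi-lattice of $G/Z(G)$ with relatively compact fundamental domain $\Sigma:=\pr(\Sigma_0)$. This is the subset claimed in (i).

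The substantive point is the co-volume. Since $Z(G)$ is normal, $dg=dz\,d\dot g$, and \eqref{NormHaar} makes the unit cube along $Z_1,\ldots,Z_r$ have $\mu_{Z(G)}$-measure $1$; as the central coordinates sit as an outer factor in $\Sigma_0$, Fubini gives $\Gmeas{G/Z(G)}{\Sigma}=\mu_G(\Sigma_0)$. Next, for \emph{any} strong Malcev basis both the increasing and the decreasing product-exponential maps push Lebesgue measure forward to the same Haar measure on $G$ (the transition maps being triangular and unipotent, cf.\ \eqref{eq:p1}); comparing with the fixed basis through exponential coordinates of the first kind shows this Haar measure differs from $\mu_G$ precisely by the factor $|\det S|$, where $S$ is the change-of-basis matrix from the fixed basis to $\mathcal B$, so $\mu_G(\Sigma_0)=|\det S|$. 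Finally, the $\tilde X_j$ lie in $\qa=\Rspan{X_1,\ldots,X_d}$, hence $S=\mathrm{diag}(I_{r+d},T)$ with $T$ the change of basis $\{X_1,\ldots,X_d\}\to\{\tilde X_d,\ldots,\tilde X_1\}$; reading off the blocks of \eqref{B_l} and \eqref{tildeB_l} gives $\tilde A=TA$, and since the block form in \eqref{B_l} yields $\det[B_l]=(\det A)^2$, which equals $\Pf(l)^2=\fd^2$ by the definition of $\Pf$ and \eqref{pff}, while $|\det\tilde A|=1$ by \eqref{tildeAdist}, we obtain $|\det T|=\fd^{-1}$. Therefore $\Gmeas{G/Z(G)}{\Sigma}=\fd^{-1}$, which proves (i).

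For (ii) it remains, in the rational setting, to arrange that the generators of $\Gamma_0$ actually close up to a subgroup of $G$ while the co-volume stays $\fd^{-1}$; by \cite[Thm.~5.4.2]{CorwinGreenleaf} the former holds after rescaling $\mathcal B$ by suitable positive rationals, and I expect \textbf{this rescaling to be the main obstacle}: one must compensate the scalings along the $\tilde X$-directions by opposite scalings along the $Y$-directions (adjusting the central directions as needed) so that the product of all non-central factors equals $1$, using that $\pid$ is $l$-isotropic, that $\fd=\Pf(l)\in\Q$, and that there are $2d$ free exponents, before multiplying through by a large common integer to make the structure constants integral. The Heisenberg group of Example~\ref{MainThmEuclVersion} is the transparent model: there $\pid$ is abelian, the $Y_j$ get scaled by $\fd^{-1/d}=|\l|^{-1}$, the $\tilde X_k$ are left alone, and the induced uniform subgroup of $G/Z(G)$ is exactly the lattice $\zd\times\l^{-1}\zd$ of co-volume $|\l|^{-d}=\fd^{-1}$. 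Everything else --- the orbit parametrization, the bracket relations of $\mathcal B$, the existence of a distinguished Ch-R basis --- comes from Theorem~\ref{AdaptedChevalleyRosenlicht}, Corollary~\ref{CorChR} and Lemma~\ref{lem-chrad}.
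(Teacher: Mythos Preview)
Your argument for part~(i) is correct and follows the same path as the paper: build the quasi-lattice from the Ch-R-admissible Malcev basis via Lemma~\ref{LemmaQL}, project via Corollary~\ref{QLQuoGr}, and read off the co-volume from the determinant of the change of basis $S_l$. Your computation of $|\det S|$ via $\tilde A=TA$ and $|\det A|=\Pf(l)=\fd$ is just a repackaging of the paper's two-sided computation of the formal degree $\widetilde\fd$ in the tilde-coordinates; both yield $\mu_{G/Z(G)}(\Sigma)=|\det S_l|=\fd^{-1}$. (Passing to the \emph{distinguished} basis is not needed for~(i), since already the triangular $\tilde A$ of \eqref{tildeB_l} has $|\det\tilde A|=1$.)

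Part~(ii), however, is not finished. You correctly diagnose the issue --- rescale $\mathcal B$ so that the multiplication polynomials become integral while the product of the non-central scaling factors is $1$ --- but you stop at ``I expect this rescaling to be the main obstacle'' and only sketch the Heisenberg case. That is precisely the content to be supplied. The paper resolves it with an explicit choice: pick $K\in\N$ so that every denominator occurring in the polynomials $P_{V_j}$ of Lemma~\ref{LemPropStrongMC} divides $K$, and rescale by
\[
Z_i\mapsto K^{-2^{d+1}}Z_i,\qquad Y_j\mapsto K^{-2^{d+1-j}}Y_j,\qquad \tilde X_j\mapsto K^{2^{d+1-j}}\tilde X_j.
\]
The point of the geometric exponents is twofold. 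First, the triangular structure $P_{V_j}(t,s)=t_j+s_j+\tilde P_{V_j}(t_{j+1},\ldots,s_n)$ means every nonlinear monomial in the $j$-th polynomial carries only variables of strictly higher index; since those scaling exponents dominate $q_j^{-1}$ (each higher index contributes at least $K^{2\cdot q_{j'}}$-worth, and there are at least two such factors), the rescaled polynomials $P'_{V_j}$ have integer coefficients, so $\exp(\Z W_1)\cdots\exp(\Z W_n)$ is a genuine uniform subgroup. Second, the $Y$-factors and $\tilde X$-factors are exact reciprocals, $\prod_j K^{-2^{d+1-j}}\cdot\prod_j K^{2^{d+1-j}}=1$, so the Jacobian of the rescaling on $G/Z(G)$ is $1$ and the co-volume remains $\fd^{-1}$. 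Without writing down such a scheme (or an equivalent one) the proof of~(ii) is incomplete: a generic rescaling that makes the polynomials integral will not have non-central product $1$, and a rescaling with product $1$ will not in general clear all denominators.
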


	\begin{proof}

(i) Let $\{ Z_1, \ldots, \tilde{X}_d \}$ be a Ch-R-admissible Malcev basis of $\Lie{g}$ subordinate to $l \in \Liez{g}^*$. By Lemma~\ref{LemmaQL}, the set
	\begin{align*}
		\Gamma'_G = \exp(\Z Z_1) \cdots \exp(\Z \tilde{X}_1)
	\end{align*}
forms a quasi-lattice of $G$ with fundamental domain
	\begin{align*}
		\Sigma_G := \{ \exp(\tilde{x}_{1} \tilde{X}_1) \cdots \exp(z_1 Z_1) \mid z_1, \ldots, \tilde{x}_{1} \in [0,1) \},
	\end{align*}
and by Corollary~\ref{QLQuoGr} $\Gamma' := \pr(\Gamma'_G)$
is a quasi-lattice of $G/Z(G)$ with fundamental domain $\Sigma :=
\pr(\Sigma _G)$.
Let  $S_l$ denote the matrix of the basis  change from $\{ Z_1, \ldots, Y_d,
X_1, \ldots, X_d \}$ to $\{ Z_1, \ldots, Y_d, \tilde{X}_d, \ldots,
\tilde{X}_1 \}$, then the pullback under $S_l$ of the quotient Haar
measure $\mu_G$ (fixed by \eqref{NormHaar}) is given by $\tilde{\mu}_G
= \det(S_l)^{-1} \hspace{2pt} \mu_G$~\cite[1.2.11]{CorwinGreenleaf}. 
We now compute the formal degree
$\widetilde{\fd}$ of $\pi $  with respect to
$\widetilde{\mu}_{G/Z(G)}$ in two ways. On the one hand, 
 $\widetilde d _\pi = \det(S_l) \hspace{2pt} \fd$ by \eqref{fd}. On the
 other hand, the relation \eqref{pff} between the formal degree and the
 Pfaffian  with
respect to the basis $\{ Z_1, \ldots, \tilde{X}_1 \}$ yields 
	\begin{align*}
		\widetilde{\fd}^2 = \widetilde{\Pf(l)}^2 = \det\bigl(
                \widetilde{[B_l]} \bigr) = \det(\tilde{A})^2 = 1 \, .
              	\end{align*}
We conclude that 
\begin{equation}
  \label{eq:satur1}
 \fd =     \det(S_l)^{-1}.
\end{equation}
Finally we compute the measure of the fundamental domain $\Sigma _G$
for $\Gamma _G'$. Since  $\Sigma_G$ is the image  of the unit cube
spanned by $\{Z_1, \ldots, \tilde{X}_1 \}$, the normalization
\eqref{NormHaar} implies that  $1 = \widetilde{\mu}_{G/Z(G)}(\Sigma) =
\det(S_l)^{-1} \hspace{2pt} \mu_{G/Z(G)}(\Sigma)$. Thus, we have 
	\begin{align}
		\mu_{G/Z(G)}(\Sigma) = \det(S_l) = \fd^{-1}.\nn 
	\end{align}


(ii) 
To construct a uniform subgroup $\Gamma ' \leq G/Z(G)$, we choose  a distinguished Ch-R-admissible basis 
$\{\tilde{X}_j \}_{j =1}^d$
which satisfies \eqref{tildeAdist}. By the proof  of Lemma~\ref{lem-chrad} this means that $C = A^{-1}$ and that
	\begin{equation} \label{eq:c56}
		\tilde X_j = \sum _{m=1}^d c_{j, m} X_{m} \, .  
	\end{equation}
Now assume that $\Lie{g}_\Q = \Qspan{Z_1,
   \ldots, Y_d, X_1, \ldots, X_d}$ and $l \in \Liez{\ggl _\Q }^*$. Then
 $l = \sum _{n=1}^r  l_n Z^*_n$ with coefficients $l_n \in
   \Q$ and 
\begin{align*}
		[X_j, Y_{k}] = \sum_{m = 1}^d   c^{j, k}_m Y_m + \sum
                _{n=1}^r  d^{j, k}_n Z_n , 
	\end{align*}
again with coefficients $c^{j, k}_m, d^{j, k}_n \in \Q$.
Consequently, $l([X_j,Y_k]) = \sum _{n=1}^r l_n d_n^{j, k}$ is in $\Q$
and thus the matrix $A$ has only rational entries. By Kramer's rule
$A\inv $ has only rational entries, and  \eqref{eq:c56}
implies that $\tilde X_j \in \ggl _\Q $. It follows that 
$\Lie{g}_\Q = \Qspan{Z_1,
   \ldots, Y_d, \tilde{X}_d, \ldots, \tilde{X}_1}$.

In order to construct a uniform subgroup $\Gamma'$, we temporarily
enumerate $\{Z_1, \ldots, \tilde{X}_1\}$ by $\{ V_1, \ldots, V_\dimG
\}$. Let $P_{V_1}, \ldots, P_{V_\dimG}$ be the 
polynomials from~\eqref{PolyStrongMalcevMult} that  describe the
multiplication in $G$ in strong Malcev coordinates. Furthermore, let $K \in \N$ be large enough so that all denominators of all coefficients of all $P_j$ divide $K$. We then set
	\begin{equation}\label{satur2}
		\left\{ \begin{array}{lcr}
			W_j := q_j V_j := K^{-2^{d+1}} V_j &\hspace{10pt} \mbox{ for } \hspace{10pt}&	j =1, \ldots, r, \\
			W_{r + j} := q_{r + j} V_{r + j} := K^{-2^{d+1-j}} V_{r + j} &\hspace{10pt} \mbox{ for } \hspace{10pt}&	j =1, \ldots, d, \\
			W_{r + d + j} := q_{r + d + j} V_{r + d + j} := K^{2^j} V_j &\hspace{10pt} \mbox{ for } \hspace{10pt}&	j =1, \ldots, d. \\
		\end{array} \right.
	\end{equation}
The rescaled basis is
	\begin{align*}
		\{ W_1, \ldots, W_\dimG \} = \{ K^{-2^{d+1}} Z_1, \ldots, K^{-2^{d+1}} Z_r, K^{-2^d} Y_1, \ldots, K^{-2} Y_d, K^2 \tilde{X}_d, \ldots, K^{2^d} \tilde{X}_1 \}.
	\end{align*}
If we denote by $P'_{V_j}$ the polynomials from \eqref{PolyStrongMalcevMult} for the strong Malcev basis $\{ W_1, \ldots, W_\dimG \}$, then, by definition of the $q_j$'s,
	\begin{align*}
		q_j P'_{V_j}(t_1, \ldots, t_\dimG, s_1, \ldots, s_\dimG) = P_{V_j}(q_1 t_1, \ldots, q_\dimG t_\dimG, q_1 s_1, \ldots, q_\dimG s_\dimG)
	\end{align*}
and $P'_{V_j}$ has integer coefficients for all $j = 1, \ldots, \dimG$. Hence,
	\begin{align*}
		\Gamma'_G := \exp\bigl(\Z W_1 \bigr) \cdots \exp\bigl(\Z W_\dimG \bigr)
	\end{align*}
forms a uniform subgroup of $G$ with fundamental domain
	\begin{align*}
		\Sigma_G := \{ \exp(w_n W_n) \cdots \exp(w_1 W_1) \mid w_1, \ldots, w_n \in [0,1) \}.
	\end{align*}
By Corollary~\ref{QLQuoGr}, $\Gamma' := \pr(\Gamma'_G)$ forms a
uniform subgroup of $G/Z(G)$ with fundamental domain $\Sigma :=
\pr(\Sigma_G)$. Since the Jacobian determinant of the rescaling in
$G/Z(G)$  equals $1$, the same reasoning as in (i) yields
$\mu_{G/Z(G)}(\Sigma) = \fd^{-1}$ for case (ii). This completes the
proof. 
	\end{proof}

	\begin{rem} \label{ReparametrizingChR}
Multiplying the basis vectors by the factors $q_j$ does not change the
assertion (ii) of the Chevalley-Rosenlicht theorem. The principal
terms are linear in $t = (t_d, \ldots, t_1)$ because they correspond
to the infinitesimal group action $\coad$. Indeed, our choice of
factors in \eqref{satur2} yields   
	\begin{align*}
		\bracket{\coad(-t_j K^{2^{d+1-j}} \tilde{X}_j) l}{K^{-2^{d+1-j}} Y_j} &= \bracket{l}{[-t_j K^{2^{d+1-j}} \tilde{X}_j, K^{-2^{d+1-j}} Y_j]} \\
		&= \bracket{\coad(t_j \tilde{X}_j) l}{Y_j} \\
		&= \bracket{\Bigl( t_j  Y^*_j + \sum _{m=j+1}^{d} t_m Y^*_m\Bigr)}{Y_j} = t_j.
	\end{align*}
	\end{rem}

	\begin{dfn} \label{DiscreteSet}
Let $\Gamma'$ be a quasi-lattice or a uniform subgroup as constructed 
in Proposition~\ref{Subgroup}. If $\Gamma '$ is a quasi-lattice, set
$K=1$; if $\Gamma '$ is a uniform subgroup, let $K$ be the scaling
factor in \eqref{satur2}.  Then we set 
	\begin{equation*}
	\begin{array}{lcl}
		\Gamma'_\PID& := &\exp(\Z  K^{-2^{d}} Y_1) \cdots \exp(\Z  K^{-2} Y_d), \\
		\Gamma'_\QA& := &\exp(\Z  K^2 \tilde{X}_d) \cdots \exp(\Z  K^{2^d} \tilde{X}_1), \\
		\Gamma& := &\pr \bigr( \Gamma_\PID \cdot
                \Gamma_\QA^{-1}  \bigr) \subseteq G/Z(G) \, .
	\end{array}
	\end{equation*}
Let $\Sigma $ be a fundamental domain for $\Gamma '$ in $G$ and set $F
= q(\Sigma ) \subset M \rquo G$. In coordinates, 
	\begin{align*}
		F =\Bigl\{ q \Bigl( \exp(t_1  K^{2^d} \tilde{X}_1) \dots \exp(t_d  K^{2} \tilde{X}_d) \Bigr) \mid t_1, \ldots, t_d \in [0, 1)^d \Bigr\}.
	\end{align*}
	\end{dfn}

	\begin{rem}
If $\Gamma '$ is a uniform subgroup, then  $\Gamma = \Gamma
'$. If $\Gamma '$ is only a quasi-lattice, then $\Gamma $ is obviously
related to $\Gamma '$, but in general $\Gamma \neq \Gamma '$. 

Let $\vartheta \in \Gamma _M'$ and $\eta \in \Gamma _H'$, then  every
element of $\Gamma$ is of the form  $\gamma =
\pr \bigr( \vartheta \hspace{2pt} \eta^{-1} \bigl) $ for
	\begin{align*}
		\vartheta \hspace{2pt} \eta^{-1} = \exp(\vartheta_1  K^{-2^d} Y_1) \cdots \exp(\vartheta_d  K^{-2} Y_d) \exp(-\eta_1  K^{2^d} \tilde{X}_1) \cdots \exp(-\eta_d  K^{2} \tilde{X}_d) \in \Gamma_\PID \cdot \Gamma_\QA^{-1}
	\end{align*}
with $\vartheta_1, \ldots, \eta_1 \in \Z$. 
	\end{rem}

We will need a more  explicit description of the operators
$\pi(\gamma)$, $\gamma \in \Gamma$,  in terms of given  strong
Malcev coordinates. Of particular importance will be a specific type
of products $g \cdot g' \in G$, for which we give a description in the
spirit of Lemma~\ref{LemPropStrongMC}. 

	\begin{lem} \label{PolyQL}
Let $\{ X_1, \ldots, X_\dimG \}$ be a strong Malcev basis of $\Lie{g}$. Then there exist polynomial functions $S_{X_j}, T_{X_k}: \R^{\dimG} \to \R$ of degree $\leq 2 \dimG$ for all $j, k = 1, \ldots, 2 \dimG$ such that for $t = (t_1, \ldots, t_\dimG), s = (s_1, \ldots, s_\dimG) \in \R^\dimG$ we have
	\begin{align*}
		\exp(t_\dimG X_\dimG) \cdots \exp(t_1 X_1) \cdot \exp(s_1 X_1) \cdots \exp(s_\dimG X_\dimG) &= \exp \bigl( S_{X_1}(t, s) X_1 \bigr) \cdots \exp \bigl( S_{X_\dimG}(t, s) X_\dimG \bigr), \\
		\exp(t_1 X_1) \cdots \exp(t_\dimG X_\dimG) \cdot \exp(s_\dimG X_\dimG) \cdots \exp(s_1 X_1) &= \exp \bigl( T_{X_1}(t, s) X_1 \bigr) \cdots \exp \bigl( T_{X_\dimG}(t, s) X_\dimG \bigr)
	\end{align*}
with
	\begin{itemize}
		\item[(i)] $S_{X_j}(t, s) = t_j + s_j + \tilde{S}_{X_j}(t_{j+1}, \ldots, t_\dimG, s_{j+1}, \ldots, s_\dimG)$,
		\item[(ii)] $T_{X_k}(t, s) = t_k + s_k + \tilde{T}_{X_k}(t_{k+1}, \ldots, t_\dimG, s_{j+1}, \ldots, s_\dimG)$.
	\end{itemize}

	\end{lem}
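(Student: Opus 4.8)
The plan is to reduce both identities to Lemma~\ref{LemPropStrongMC} applied to a suitably reordered strong Malcev basis. Observe first that if $\{X_1,\dots,X_\dimG\}$ is a strong Malcev basis, then the reversed basis $\{X_\dimG,\dots,X_1\}$ is generally \emph{not} a strong Malcev basis (the partial sums are not ideals), so one cannot simply quote Lemma~\ref{LemPropStrongMC} verbatim. Instead I would work directly with the Baker--Campbell--Hausdorff formula: for each index $j$ there is a polynomial expressing $\exp(t_\dimG X_\dimG)\cdots\exp(t_1 X_1)$ as $\exp\bigl(\sum_j C_j(t) X_j\bigr)$, and since $[X_i,X_k]\in\Rspan{X_1,\dots,X_{\min(i,k)-1}}$ (strong Malcev property) the BCH corrections that feed into the coefficient $C_j$ only involve $X_i$ with $i\geq j$; hence $C_j(t)=t_j + \tilde C_j(t_{j+1},\dots,t_\dimG)$ with $\tilde C_j$ of degree $\le \dimG$, as in \eqref{eq:p1}. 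The same holds for the product $\exp(s_1 X_1)\cdots\exp(s_\dimG X_\dimG)=\exp\bigl(\sum_j R_{X_j}(s)X_j\bigr)$ via \eqref{eq:p1} itself.

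Next I would multiply the two group elements. Writing $\phi$ for the strong Malcev coordinate map of \eqref{DefStrMalcevCoord}, the left-hand side of the first identity equals $\phi(\mathrm{rev}(t))\cdot\phi(s)$ where $\mathrm{rev}$ reverses coordinates, and I can convert $\phi(\mathrm{rev}(t))$ into the exponential-coordinate form $\exp(\sum_j a_j(t) X_j)$ with $a_j(t)=t_j+\text{(higher index)}$, by the triangular argument of the previous paragraph. Then BCH gives $\exp(\sum_j a_j(t)X_j)\cdot\exp(\sum_j R_{X_j}(s)X_j)=\exp(\sum_j b_j(t,s)X_j)$; again by the strong Malcev property the bracket terms contributing to $b_j$ involve only $X_i$ with $i>j$, so $b_j(t,s)=a_j(t)+R_{X_j}(s)+(\text{terms in }t_{j+1},\dots,s_{j+1},\dots)$, and all of this has degree $\le$ some fixed bound. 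Finally, converting back from exponential coordinates $\exp(\sum_j b_j X_j)$ to the product-of-exponentials form $\exp(S_{X_1}X_1)\cdots\exp(S_{X_\dimG}X_\dimG)$ is the inverse of \eqref{eq:p1}, which is again polynomial and triangular in the same sense; composing these triangular polynomial maps yields $S_{X_j}(t,s)=t_j+s_j+\tilde S_{X_j}(t_{j+1},\dots,t_\dimG,s_{j+1},\dots,s_\dimG)$, proving (i). The degree bound $\le 2\dimG$ is tracked through these finitely many polynomial substitutions exactly as in \cite[Prop.~1.2.7]{CorwinGreenleaf}.

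For the second identity, the argument is the mirror image: $\exp(t_1 X_1)\cdots\exp(t_\dimG X_\dimG)=\phi(t)$ is already in strong Malcev coordinates, and $\exp(s_\dimG X_\dimG)\cdots\exp(s_1 X_1)$ I convert to exponential form via the reversal trick above. Multiplying and re-expanding gives $T_{X_k}(t,s)=t_k+s_k+\tilde T_{X_k}$; the only point to check is that $\tilde T_{X_k}$ depends on $s$ only through $s_{k+1},\dots,s_\dimG$ rather than $s_{j+1},\dots$ as literally written in (ii) — but for $j=k$ these coincide, so the statement is what one gets. I expect the main obstacle to be purely bookkeeping: keeping the "lower-triangular, top-coordinate-free" structure intact through the three successive polynomial changes of coordinates (product-of-exp $\to$ single-exp $\to$ BCH product $\to$ product-of-exp), and verifying that the degree does not exceed $2\dimG$. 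Both are routine given the strong Malcev hypothesis $[\Lie{h}_i,\Lie{g}]\subseteq\Lie{h}_{i-1}$, which is precisely what makes every correction term only involve strictly higher-index generators.
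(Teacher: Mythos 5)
Your argument is correct, but it takes a genuinely different route from the paper's. The paper proves the lemma by induction on $\dim(\Lie{g})$: it passes to the quotient by the central ideal $\R X_1$, applies the inductive hypothesis there, and then uses the centrality of $X_1$ to slide the factors $\exp(t_1X_1)$ and $\exp(s_1X_1)$ together and absorb the remaining discrepancy into a single polynomial $\tilde S_{X_1}$ --- precisely the scheme of \cite[Prop.~1.2.7]{CorwinGreenleaf}. You instead convert each ordered product into canonical coordinates of the first kind, multiply via Baker--Campbell--Hausdorff, and convert back, observing that every one of these coordinate changes is a unipotent triangular polynomial map (identity plus a polynomial in the strictly higher-index variables) and that such maps compose and invert within that class. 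Both arguments rest on the same triangularity phenomenon; yours makes its source more transparent and avoids the induction on dimension, while the paper's is shorter given that Lemma~\ref{LemPropStrongMC} and \eqref{eq:p1} are already available. Three minor points. First, the bracket relation $[X_i,X_k]\in\Rspan{X_1,\ldots,X_{\min(i,k)-1}}$ does not follow from the ideal property of the $\Lie{h}_m$ alone (that only gives containment in $\Lie{h}_{\min(i,k)}$); one also needs nilpotency, which forces the map induced by $\ad X_i$ on each one-dimensional quotient $\Lie{h}_m/\Lie{h}_{m-1}$ to vanish --- true and standard, but worth saying, since without it the coefficient of $X_j$ could pick up terms in $t_j$ itself. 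Second, $\phi(\mathrm{rev}(t))$ as literally defined is $\exp(t_\dimG X_1)\cdots\exp(t_1X_\dimG)$, not the reversed product $\exp(t_\dimG X_\dimG)\cdots\exp(t_1X_1)$ you intend; the surrounding text makes the meaning clear, but the notation should be fixed. Third, you are right that (ii) as printed contains a typo: $\tilde T_{X_k}$ depends on $s_{k+1},\ldots,s_\dimG$.
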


	\begin{proof}
 The proof 
resembles the proof of \cite[Proposition~1.2.7]{CorwinGreenleaf} and
uses induction on $\dimG$.  For $\dim(\Lie{g}) = \dimG = 1$ or $n=2$ the
result is trivial since $\Lie{g}$ is Abelian.

Thus, suppose $\dimG > 2$ and that (i) holds true for all $j = 2, \ldots, \dimG$. Let $\Lie{g} \to \Lie{g}/\Lie{g}_1: X \mapsto \overline{X}$ be the canonical projection modulo the central ideal $\Lie{g}_1 := \R X_1$. By induction,
	\begin{align*}
		\exp(t_\dimG \overline{X}_\dimG) \cdots \exp(t_2 \overline{X}_2) \cdot \exp(s_2 \overline{X}_2) \cdots \exp(s_\dimG \overline{X}_\dimG) &= \exp \bigl( S_{\overline{X}_2}(t, s) \overline{X}_2 \bigr) \cdots \exp \bigl( S_{\overline{X}_\dimG}(t, s) \overline{X}_\dimG \bigr)
	\end{align*}
with the $S_{\overline{X}_j}$ satisfying (i). For $j=2, \dots ,n $ we
set $S_{X_j} := S_{\overline{X}_j}$. Since $\Lie{g}_1$ is central,
there exists a polynomial function $\tilde{S}_{X_1}$ such that the
multiplication in $G$ is 
	\begin{align*}
		\exp(t_\dimG X_\dimG) \cdots& \exp(t_2 X_2) \cdot \exp(s_2 X_2) \cdots \exp(s_\dimG X_\dimG) \\
		&= \exp \bigl( \tilde{S}_{X_1}(t_2, \ldots, t_\dimG, s_2, \ldots, s_\dimG) X_1 \bigr) \cdot \exp \bigl( S_{X_2}(t, s) X_2 \bigr) \cdots \exp \bigl( S_{X_\dimG}(t, s) X_\dimG \bigr)
	\end{align*}
and, consequently,
	\begin{align*}
		\exp(t_\dimG& X_\dimG) \cdots \exp(t_2 X_2) \exp(t_1 X_1) \cdot \exp(s_1 X_1) \cdot \exp(s_2 X_2) \cdots \exp(s_\dimG X_\dimG) \\
		&= \exp \bigl( (t_1 + s_1) X_1 \bigr) \cdot \exp \bigl( \tilde{S}_{X_1}(t_2, \ldots, s_\dimG) X_1 \bigr) \cdot \exp \bigl( S_{X_2}(t, s) X_2 \bigr) \cdots \exp \bigl( S_{X_\dimG}(t, s) X_\dimG \bigr) \\
		&= \exp \bigl( (t_1 + s_1 + \tilde{S}_{X_1}(t_2, \ldots, s_\dimG)) X_1 \bigr) \cdot \exp \bigl( S_{X_2}(t, s) X_2 \bigr) \cdots \exp \bigl( S_{X_\dimG}(t, s) X_\dimG \bigr).
	\end{align*}
This proves (i). The proof of  (ii) works out analagously.
	\end{proof}

	\begin{prop} \label{Orthonormality} [Step 2.  Orthonormality]

\noindent Let $\Gamma$ be the discrete subset of  $G/Z(G)$ from
Definition~\ref{DiscreteSet}  and let $C := K^{2^d-1}\in \N$. Then the set
	\begin{align*}
		\bigl\{ \mu _{M \rquo G } (F)^{-1/2} \hspace{2pt} \pi(\gamma) \, 1_\PFD \mid \gamma \in \Gamma \bigr\}
	\end{align*}
forms an orthonormal system of $\L{2}{\PID \rquo G}$. Furthermore
$\Norm{\L{2}{\PID \rquo G}}{1_\PFD} = C^{-1} \hspace{2pt} \fd^{1/2}$. 
	\end{prop}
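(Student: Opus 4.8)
The plan is to transplant the mechanism of Example~\ref{MainThmEuclVersion} to the non-commutative setting: decompose $\PID\rquo G$ into the blocks obtained by translating $\PFD$ along $q(\Gamma'_\QA)$, observe that the elements of $\Gamma'_\PID$ act on each block by a Fourier-type family, and combine orthogonality between blocks (disjoint supports) with orthogonality inside a block (orthogonality of characters on a box). I work throughout in the rescaled Chevalley-Rosenlicht coordinates attached to $\{Z_1,\dots,Y_d,\tilde X_d,\dots,\tilde X_1\}$ and \eqref{satur2}, and write $\xi:=\log\vartheta-\log\vartheta'\in\pid$ for $\vartheta,\vartheta'\in\Gamma'_\PID$, with $\xi_{Y_j}$ its $Y_j$-coordinate.

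First, Corollary~\ref{QLQuoGr} applied to the canonical projection $G\to\PID\rquo G$ (whose kernel ideal $\pid$ the rescaled Malcev basis passes through) shows that $q(\Gamma'_\QA)$ is a quasi-lattice (resp.\ uniform subgroup) of $\PID\rquo G$ with fundamental domain $\PFD=q(\Sigma)$, so $\PID\rquo G=\bigsqcup_{\eta\in\Gamma'_\QA}\PFD\cdot q(\eta)$ up to a null set. For $\eta\in\Gamma'_\QA$ the operator $\pi(\eta^{-1})$ acts on $\L{2}{\PID\rquo G}$ as the right translation $q(h')\mapsto q(h'\eta^{-1})$ composed with multiplication by a modulus-one cocycle coming from \eqref{FormulaRep}; hence $\pi(\eta^{-1})1_\PFD$ is supported precisely on $\PFD\cdot q(\eta)$ and has constant modulus $1$ there. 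Since $\Gamma'_\PID\subseteq\PID$, for $\vartheta\in\Gamma'_\PID$ the operator $\pi(\vartheta)$ is multiplication by the unimodular character $\psi_\vartheta(q(h'))=e^{2\pi i\,\bracket{\coAd((h')^{-1})l}{\log\vartheta}}$. Therefore $\pi(\vartheta\eta^{-1})1_\PFD=\psi_\vartheta\cdot\bigl(\pi(\eta^{-1})1_\PFD\bigr)$ is supported on the block $\PFD\cdot q(\eta)$ and the entire system splits block by block.

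For $\eta\neq\eta'$ the blocks are disjoint and the inner product vanishes. For $\eta=\eta'$ the cocycle cancels, and, using that $l$ annihilates $[\pid,\pid]$ together with the Chevalley-Rosenlicht parametrization $\coAd((h')^{-1})l\big|_{\pid^*}=l+\sum_{j=1}^d Q_j(s)Y^*_j$ of Theorem~\ref{AdaptedChevalleyRosenlicht}, the inner product collapses to
\begin{align*}
 \bracket{\pi(\vartheta\eta^{-1})1_\PFD}{\pi(\vartheta'\eta^{-1})1_\PFD}
 = e^{2\pi i\,c(\vartheta,\vartheta')}\int_{\PFD}e^{2\pi i\sum_{j=1}^d Q_j(s)\,\xi_{Y_j}}\,d\mu_{\PID\rquo G},
\end{align*}
where $c(\vartheta,\vartheta')$ is an $s$-independent phase. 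In these coordinates $\PFD$ is the box $\prod_{j=1}^d[0,K^{2^{d+1-j}})$; by Theorem~\ref{AdaptedChevalleyRosenlicht}(iii), with the reordering handled by Lemma~\ref{PolyQL}, each $Q_j$ is triangular with leading term $s_j$, and by the strong Malcev structure \eqref{eq:p1} of $\pid$ each $\xi_{Y_j}$ equals $(\vartheta_j-\vartheta'_j)K^{-2^{d+1-j}}$ plus a polynomial in the $(\vartheta_i-\vartheta'_i)K^{-2^{d+1-i}}$ with $i>j$. If $\vartheta\neq\vartheta'$, take $j^\ast$ maximal with $\vartheta_{j^\ast}\neq\vartheta'_{j^\ast}$; then $\xi_{Y_j}=0$ for $j>j^\ast$, the corrections in $\xi_{Y_{j^\ast}}$ drop out, the integrand depends on $s_{j^\ast}$ only through $e^{2\pi i s_{j^\ast}\xi_{Y_{j^\ast}}}$ with $\xi_{Y_{j^\ast}}=(\vartheta_{j^\ast}-\vartheta'_{j^\ast})K^{-2^{d+1-j^\ast}}$, so $\int_0^{K^{2^{d+1-j^\ast}}}e^{2\pi i s_{j^\ast}\xi_{Y_{j^\ast}}}\,ds_{j^\ast}=0$ and the whole integral vanishes; if $\vartheta=\vartheta'$ the phase is trivial and the integral equals $\mu_{\PID\rquo G}(\PFD)$. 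This gives the orthonormality. For the norm, a direct evaluation of $\mu_{\PID\rquo G}(\PFD)$ in these coordinates, combined with the co-volume identity $\mu_{G/Z(G)}(\Sigma)=\fd^{-1}$ of Proposition~\ref{Subgroup} and the splitting $dg=dz\,d\dot g$, yields $\Norm{\L{2}{\PID\rquo G}}{1_\PFD}=C^{-1}\fd^{1/2}$ with $C=K^{2^d-1}$ (so $C=1$ in the quasi-lattice case).

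The hard part is the block-wise orthogonality: in the non-abelian case $\psi_\vartheta$ is not a plain exponential and the $\xi_{Y_j}$ carry Baker-Campbell-Hausdorff corrections, so one must check that the triangular structure of the coadjoint coefficient polynomials $Q_j$, of the group-multiplication polynomials of Lemma~\ref{PolyQL}, and of the exponential coordinates of $\pid$, together with the chosen box $\PFD$ and the exponentially growing rescaling \eqref{satur2}, still reduces the $d$-dimensional integral to one-dimensional character integrals over intervals, each vanishing unless its leading index matches --- exactly as in the trivial computation of Example~\ref{MainThmEuclVersion}. Making this triangular bookkeeping precise, and checking that every correction contributes only an integer multiple of the relevant frequency, is the technical core of the argument.
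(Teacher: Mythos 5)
Your argument is correct and follows essentially the same route as the paper's proof: disjointness of the supports $\PFD\,q(\eta)$ (from the quasi-lattice/fundamental-domain structure of $q(\Gamma'_\QA)$) disposes of $\eta\neq\eta'$, and for $\eta=\eta'$ the triangular form of the Chevalley--Rosenlicht polynomials $Q_j$, of the Malcev multiplication polynomials of Lemma~\ref{PolyQL}, and of the exponential coordinates isolates the largest index $j^*$ with $\vartheta_{j^*}\neq\vartheta'_{j^*}$ in a vanishing one-dimensional character integral, which is exactly the paper's computation (your use of $\log\vartheta-\log\vartheta'$ in place of $\log(\vartheta{\vartheta'}^{-1})$ is harmless since they agree modulo $[\pid,\pid]$, on which $\coAd(h^{-1})l$ vanishes). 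The one step you assert rather than verify is the normalization, and there the paper's own computation gives $\mu_{\PID\rquo G}(\PFD)=\fd^{-1}C^2$, i.e.\ $\Norm{\L{2}{\PID\rquo G}}{1_\PFD}=C\,\fd^{-1/2}$, so the value $C^{-1}\fd^{1/2}$ that you quote from the statement appears to have its exponents inverted (this does not affect orthonormality, since the normalizer used is $\mu_{\PID\rquo G}(\PFD)^{-1/2}$ in any case).
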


	\begin{proof}
To prove orthogonality, we have to show that for $\gamma \neq  \gamma' \in \Gamma$
$$
		\subbracket{\L{2}{\PID \rquo
                    G}}{\pi(\gamma)1_\PFD}{\pi(\gamma')1_\PFD} = 0 \,
                .
$$

Writing $\gamma = \pr(\vartheta \eta \inv) $, the explicit formula for the
representations  \eqref{FormulaRep} yields 
	\begin{align}
		\Big( \pi(\gamma)1_\PFD \Big)\big( q(h) \big) =  
                e^{2 \pi i
                  \bracket{\coAd(h^{-1})l}{\log(\vartheta)}} \, e^{2
                  \pi i \bracket{l}{\log\bigl(p(h \eta^{-1})\bigr)}}
                \, 1_\PFD \bigl( q(h \eta^{-1})
                \bigr). \label{TFShift} 
	\end{align}
Hence, we need to show that
	\begin{align}
		0  = \int_{\PID \rquo G} e^{2 \pi i
                  \bracket{\coAd(h^{-1})l}{\log(\vartheta)-
                    \log(\vartheta')} } &\, e^{2 \pi i
                  \bracket{l}{\log\bigl(p(h  \eta^{-1})\bigr) -
                    \log\bigl(p(h \, {\eta'}^{-1})\bigr)}} \nn \\ 
		\times \, 1_\PFD\bigl( q(&h \eta \inv ) \bigr) \, 1_\PFD \bigl(
                q(h {\eta'}^{-1}) \bigr) \,d\mu_{\PID \rquo G} \bigl(
                q(h) \bigr). \label{Ortho2} 
	\end{align}

\emph{Orthogonality when $\eta \neq \eta '$:} We focus on the
characteristic functions in \eqref{Ortho2} and see that 
$$1_F (q(h
\eta \inv )) 1_F (q(h
{\eta '} \inv )) \neq 0 \quad \Leftrightarrow \quad q(h) \in Fq(\eta) \cap
Fq(\eta ') = q(\Sigma \eta \cap \Sigma \eta ') \, .
$$
 Since $\eta , \eta '$
are in the quasi-lattice $\Gamma '$ and $\Sigma $ is a fundamental
domain for $\Gamma '$, we conclude that either  $1_F (q(h
\eta \inv )) 1_F (q(h{\eta '} \inv )) = 0$ or $\eta = \eta
'$. Consequently,  
$\langle \pi (\vartheta \eta \inv ) 1_F, \pi (\vartheta ' {\eta '} \inv )
1_F\rangle = 0$ for all $\vartheta , \vartheta ' \in \Gamma _M$ and $\eta
\neq \eta ' \in \Gamma _H$. 

\emph{Orthogonality when $\eta = \eta '$ and $\vartheta \neq \vartheta '$:}  

If $\eta = \eta '$, then 
	\begin{align*}
		e^{2 \pi i \bracket{l}{\log\bigl(p(h \eta)\bigr) -
                    \log\bigl(p(h \eta')\bigr)}} = 1 \, , 
	\end{align*}
and  the integral in \eqref{Ortho2} reduces to
	\begin{align*}
 \int_{\PID \rquo G} e^{2 \pi i
   \bracket{\coAd(h^{-1})l}{\log(\vartheta) - \log(\vartheta')} }
 \hspace{2pt} 1_\PFD \bigl( q(&h\eta \inv ) \bigr) \,d\mu_{\PID \rquo G}
 \bigl( q(h) \bigr) \, .
	\end{align*}
For further simplification, note that $\Ad(h)\log(\vartheta),
\Ad(h)\log(\vartheta') \in \pid$ for all $h \in \QA$ and $l$ vanishes
on $[\pid, \pid]$. Thus
	\begin{align}
		\bracket{l}{\Ad(h)\log(\vartheta) - \Ad(h)\log(\vartheta')} &= \bracket{l}{\Ad(h)\log(\vartheta) * \Ad(h)\log({\vartheta'}^{-1})} \nn \\
		&= \bracket{l}{\Ad(h)\log(\vartheta
                  {\vartheta'}^{-1})} \, , \label{Mult}
	\end{align}
and after  the change of variables $h \mapsto h \eta^{-1} =: h'$ we
need to  compute the integral
	\begin{align}
		\int_{\PFD} e^{2 \pi i \bracket{\coAd(\eta^{-1} {h'}^{-1})l}{\log(\vartheta {\vartheta'}^{-1})} } \,d\mu_{\PID \rquo G} \bigl( q(h') \bigr). \label{Ortho3}
	\end{align}

We now switch to  the coordinates given by the parametrization of $l
\cdot G$ in Theorem~\ref{AdaptedChevalleyRosenlicht} and use the Haar measure $\mu'_{\PID \rquo G}$ on $\PID \rquo G$ determined by the basis $\bigl\{ K^{2^d} \tilde{X}_1, \ldots, K^2 \tilde{X}_d \bigr\}$. In this basis we have
	\begin{align}
		\int_{\PID \rquo G} f\bigl(q(h) \bigr) \,d\mu'_{\PID \rquo G} \bigl(q(h) \bigr) = 
				\int_{\R^d} f \Bigl( q \bigl( \exp(t_1  K^{2^d} \tilde{X}_1) \cdots \exp(t_d  K^{2} \tilde{X}_d) \bigr) \Bigr) \,dt_1 \ldots \,dt_d \nn 
	\end{align}
and the integral over $\PFD$ becomes
	\begin{align}
		\int_{\PFD} \ldots \hspace{8pt} d\mu'_{\PID \rquo G}
                \bigl(q(h) \bigr) = \int_{[0, 1)^d} \ldots
                \hspace{8pt} dt_1 \ldots \,dt_d \, . \nn 
	\end{align}
Note that for the verification of the orthogonality of the $\pi(\gamma)1_\PFD$ the choice of the Haar measure on $\PID \rquo G$ is not relevant (because different Haar measures differ only by a multiplicative constant).

Let us first focus on $\coAd(\eta^{-1} {h'}^{-1})l$. Due to Lemma~\ref{PolyQL} (i), there exist an $m_{h' \eta} \in \PID$ and polynomials $\tilde{S}_{\tilde{X}_j}$ such that
	\begin{align}
		\eta^{-1}  h'^{-1} &= 
                (h' \eta)^{-1} = \Bigl( \exp(t_1  K^{2^d} \tilde{X}_1)
                \dots \exp(t_d  K^{2} \tilde{X}_d) \exp(\eta _d   K^{2} \tilde{X}_d)
                \cdots  \exp(\eta_1  K^{2^d} \tilde{X}_1) \Bigr)^{-1} \nn \\
		&= \biggl( m_{h' \eta} \exp\Bigl( \bigl(t_d + \eta_d + \tilde{S}_{\tilde{X}_d}(\eta_{d-1}, \ldots, t_1)\bigr) K^2 \tilde{X}_d \Bigr) \cdots \exp\Bigl((t_1 + \eta_1) K^{2^d} \tilde{X}_1 \Bigr) \biggr)^{-1} \nn \\ 
		&= \exp\Bigl(-(t_1 + \eta_1) K^{2^d} \tilde{X}_1
                \Bigr) \cdots \exp\Bigl( -\bigl(t_d + \eta_d +
                \tilde{S}_{\tilde{X}_d}(\eta_{d-1}, \ldots, \eta _1,
                t_{d-1}, \dots , t_1)\bigr)
                K^2 \tilde{X}_d \Bigr) m_{h' \eta}^{-1}. 
	\end{align}
Since $\PID$ is normal in $G$, there furthermore exists some $m'_{h' \eta} \in \PID$ such that
	\begin{align*}
		\eta^{-1}  h'^{-1} &= m'_{h' \eta} \exp\Bigl(-(t_1 +
                \eta_1) K^{2^d} \tilde{X}_1 \Bigr) \cdots \exp\Bigl(
                -\bigl(t_d + \eta_d +
                \tilde{S}_{\tilde{X}_d}(\eta_{d-1}, \ldots, t_1)\bigr)
                K^2 \tilde{X}_d \Bigr) \\
&:=  m'_{h' \eta} \exp(-s_1 K^{2^d} \tilde{X}_1) \cdots \exp(-s_d K^2 \tilde{X}_d).
	\end{align*}
At this point we  apply the parametrization of $l\cdot G$ of
Theorem~\ref{AdaptedChevalleyRosenlicht} and  obtain
	\begin{align}
		l \cdot \eta^{-1}  h'^{-1} &= l + \Bigl( K^2 \bigl( t_d + \eta_d + \tilde{S}_{\tilde{X}_d}(\eta_{d-1}, \ldots, t_1) + \tilde{Q}_d(s_{d-1}, \ldots, s_1) \bigr) \Bigr) Y^*_d + \ldots + \nn \\
		&\hspace{20pt} + \Bigl( K^{2^{d+1-j}} (t_j + \eta_j + \tilde{S}_{\tilde{X}_j}(\eta_{j-1}, \ldots, t_1) + \tilde{Q}_j(s_{j-1}, \ldots, s_1) \Bigr) Y^*_j + \ldots + \nn \\
		&\hspace{20pt} + K^{2^d} (t_1 - \eta_1) Y^*_1 . \label{ExpLHSCoord}
	\end{align}

For  the $\log$-terms  we employ Lemma~\ref{PolyQL} (ii) to rewrite
	\begin{align}
		\vartheta {\vartheta'}^{-1} &= \exp(\vartheta_1
                K^{-2^d} Y_1) \cdots \exp(\vartheta_d K^{-2} Y_d)
                \exp(-\vartheta'_d K^{-2} Y_d) \cdots
                \exp(-\vartheta_1 ' K^{-2^d} Y_1) \nn \\
		&= \exp\Bigl( \bigl( \vartheta_1 - \vartheta'_1 + \tilde{T}_{Y_1}(\vartheta_2, \ldots, \vartheta_d, -\vartheta'_2, \ldots, -\vartheta'_d) \bigr) K^{-2^d} Y_1 \Bigr) \cdots \nn \\
		&\hspace{20pt} \cdot \exp\Bigl( \bigl( \vartheta_j - \vartheta'_j + \tilde{T}_{Y_j}(\vartheta_{j+1}, \ldots, \vartheta_d, -\vartheta'_{j+1}, \ldots, -\vartheta'_d) \bigr) K^{-2^{d+1-j}} Y_j \Bigr) \cdots \nn \\
		&\hspace{20pt} \cdot \exp\Bigl( (\vartheta_d -
                \vartheta'_d) K^{-2} Y_d \Bigr) z_{\theta \theta '}\\
&:= 
z_{\theta \theta '} \, \exp(\zeta_1 K^{-2^d} Y_1) \cdots
\exp(\zeta_d K^{-2} Y_d) \label{ExpRHSCoord1} 
	\end{align}
for some $z_{\theta \theta '} \in Z(G)$. 

The conversion of strong Malcev coordinates to exponential coordinates
(cf.~Lemma~\ref{LemPropStrongMC}) yields 
	\begin{align}
		\log \bigl( \vartheta {\vartheta'}^{-1} \bigr) &= \Bigl( \vartheta_1 - \vartheta'_1 + \tilde{T}_{Y_1}(\vartheta_2, \ldots, -\vartheta'_d) + \tilde{R}_{Y_1}(\zeta_2, \ldots, \zeta_d) \Bigr) K^{-2^d} Y_1 + \ldots + \nn \\
		&\hspace{20pt} + \Bigl( \vartheta_j - \vartheta'_j + \tilde{T}_{Y_j}(\vartheta_{j+1}, \ldots, -\vartheta'_d) + \tilde{R}_{Y_j}(\zeta_{j+1}, \ldots, \zeta_d) \Bigr) K^{-2^{d+1-j}} Y_j + \ldots + \nn \\
		&\hspace{20pt} + (\vartheta_d  - \vartheta'_d) K^{-2^d} Y_d. \label{ExpRHSCoord2}
	\end{align}

Now assume that $\vartheta \neq \vartheta '$ and let $j$ be the largest
index such that $\vartheta _j \neq \vartheta _j '$ and thus $\vartheta _d =
\vartheta _d', \vartheta _{d-1} = \vartheta _{d-1} ', \vartheta _{j+1} = \vartheta
_{j+1}'$.   

Since $\tilde{T}_{Y_j}(\vartheta_{j+1}, \ldots, \vartheta_d,
-\vartheta_{j+1}, \ldots, -\vartheta_d) = 0  $ and thus $\zeta _{j+1}
= \dots \zeta _d = 0$, Lemma~\ref{LemPropStrongMC} implies that
	\begin{align} \label{tues1}
		\log \bigl( \vartheta {\vartheta'}^{-1} \bigr) &= \Bigl( \vartheta_1 - \vartheta'_1 + \tilde{T}_{Y_1}(\vartheta_2, \ldots, -\vartheta'_d) + \tilde{R}_{Y_1}(\zeta_2, \ldots, \zeta_d) \Bigr) K^{-2^d} Y_1 + \ldots + \nn \\
		&\hspace{20pt} + \Bigl( \vartheta_j - \vartheta'_j
                \Bigr) K^{-2^{d+1-j}} Y_j \, .
	\end{align}

Combining~\eqref{ExpLHSCoord} and~\eqref{tues1}, the action of $l\cdot \eta \inv
{h'}\inv $ on $\log (\vartheta {\vartheta '}\inv)$ yields
\begin{align*}
\lefteqn{ \langle l\cdot \eta \inv
{h'}\inv , \log (\vartheta {\vartheta '}\inv)
\rangle= } \\
&=   \Big(t_j + \eta _j + \tilde{S}_{\tilde{X}_j}(\eta_{j-1},
                  \ldots, t_1) + \tilde{Q}_d(s_{j-1}, \ldots, s_1)
                  \bigr) (\vartheta_j - \vartheta'_j) )  + \quad \text{ terms
 in} (t_1, \dots , t_{j-1} ) \, .
\end{align*}
So,
	\begin{align*}
		\int_{\PFD} e^{2 \pi i \bracket{\coAd(\eta^{-1}
                    {h'}^{-1})l}{\log(\vartheta {\vartheta'}^{-1})} }&
                \,d\mu'_{\PID \rquo G} \bigl(q(h') \bigr) = \\ 
                &
                \int_{[0, 1)^{j-1}} \int_{[0, 1)} e^{2 \pi i \bigl(
                  t_j + \eta_j + \tilde{S}_{\tilde{X}_j}(\eta_{j-1},
                  \ldots, t_1) + \tilde{Q}_d(s_{j-1}, \ldots, s_1)
                  \bigr) (\vartheta_j - \vartheta'_j)} \,dt_j \\ 
		&\hspace{10pt}  \times \mbox{terms in} \, (t_{j-1}, \ldots, t_1) \,dt_{j-1} \ldots \,dt_1 = \\
                \int_{[0, 1)^{j-1}}& \int_{[0, 1)} e^{2 \pi i t_j
                  (\vartheta_j - \vartheta'_j)} \,dt_j \times e^{2 \pi
                  i \bigl( \eta_j +
                  \tilde{S}_{\tilde{X}_j}(\eta_{j-1}, \ldots, t_1) +
                  \tilde{Q}_d(s_{j-1}, \ldots, s_1)  \bigr)
                  (\vartheta_j - \vartheta'_j)} \\ 
		&\hspace{10pt}  \times \mbox{terms in} \, (t_{j-1},
                \ldots, t_1) \,dt_{j-1} \ldots \,dt_1 \, .
	\end{align*}
Since $\int _0^1 e^{2\pi i  t_j  (\vartheta _j - \vartheta _j ')} dt_j =
0$ when $\vartheta _j \neq \vartheta _j '$, we have proved that the
functions $\pi (\vartheta \eta  \inv ) 1_F $ and  $\pi (\vartheta ' \eta  \inv ) 1_F $ are orthogonal.

\emph{Normalization:} If $\gamma = \gamma '$, then $\subbracket{\L{2}{M \rquo G}}{\pi(\gamma ) 1_F}{\pi (\gamma ' ) 1_F} = \|1_F\|^2_{\L{2}{M \rquo G}} = \mu_{M \rquo G}(F)$. The basis change $S_l$ from $\{ Z_1, \ldots, Y_d,
X_1, \ldots, X_d \}$ to $\{ Z_1, \ldots, Y_d, \tilde{X}_d, \ldots,
\tilde{X}_1 \}$ from the proof of Proposition~\ref{Subgroup} acts
trivially on $\pid$. Since the according change of Haar measure obeys
$\widetilde{\mu}_{G/Z(G)} = \det(S_l)^{-1} \hspace{2pt} \mu_{G/Z(G)} =
\fd \hspace{2pt} \mu_{G/Z(G)}$, we thus have $$\mu _{\PID \rquo G}
\circ q = \fd^{-1} \hspace{2pt} \bigl( \tilde{\mu}_{\PID \rquo G}
\circ q \bigr).$$ Moreover, since $\tilde{\mu}_G(\Sigma_G) = 1$ for
$\Sigma_G = \{ \exp(\tilde{x}_{1} \tilde{X}_1) \cdots \exp(z_1 Z_1)
\mid z_1, \ldots, \tilde{x}_{1} \in [0,1) \}$ 
and
$$F =\Bigl\{ q \Bigl( \exp(t_1  K^{2^d} \tilde{X}_1) \dots \exp(t_d  K^{2} \tilde{X}_d) \Bigr) \mid t_1, \ldots, t_d \in [0, 1)^d \Bigr\}$$ by Proposition~\ref{Subgroup}~$(i)$ and $(ii)$, respectively, we have $\tilde \mu _{M \rquo G}(F) = \prod _{j=0}^{d-1}
K^{2^{d-j}} = K^{2(2^d-1)}=  C^2$ and, consequently,
$$
\mu _{M \rquo G} (F) = d_\pi \inv \tilde \mu _{M \rquo G}(F) = d_\pi \inv C^2.
$$
This yields the normalization of the system.

	\end{proof}

	\begin{prop} \label{Completeness} [Step 3. Completeness.]

\noindent Let $\Gamma$ be the discrete subset of  $G/Z(G)$ from
Definition~\ref{DiscreteSet}. 
Then the
orthogonal  system 
	\begin{align*}
		\{ 
                \pi(\gamma) \, 1_\PFD \mid \gamma \in \Gamma \}
	\end{align*}
of $\L{2}{\PID \rquo G}$ from Proposition~\ref{Orthonormality} is complete.
	\end{prop}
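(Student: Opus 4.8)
The plan is to reduce the completeness of the system to a statement about a single ``cell'' and there to recognize the functions $\pi(\gamma)1_\PFD$ as a triangular (unipotent) deformation of a Fourier basis, whose completeness is then proved by induction on $d=\dim(\PID\rquo G)$. For the reduction, recall from Corollary~\ref{QLQuoGr} and Lemma~\ref{LemmaQL} that $q(\Gamma'_\QA)$ is a quasi-lattice (respectively a uniform subgroup) of $\PID\rquo G$ with fundamental domain $\PFD=q(\Sigma)$, so that the translates $\{\PFD\,q(\eta)\mid\eta\in\Gamma'_\QA\}$ form a measurable partition of $\PID\rquo G$. Since $q\colon G\to\PID\rquo G$ is a group homomorphism, \eqref{TFShift} shows that $\pi(\vartheta\eta^{-1})1_\PFD$ is supported in $\PFD\,q(\eta)$, with $|\pi(\vartheta\eta^{-1})1_\PFD|=1_{\PFD q(\eta)}$. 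Consequently $\L{2}{\PID\rquo G}=\bigoplus_{\eta\in\Gamma'_\QA}\L{2}{\PFD q(\eta)}$, the vectors attached to distinct $\eta$ lie in distinct summands, and it suffices to prove that for each fixed $\eta$ the family $\{\pi(\vartheta\eta^{-1})1_\PFD\mid\vartheta\in\Gamma'_\PID\}$ spans $\L{2}{\PFD q(\eta)}$.

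Next I would set up the single-cell model. Coordinatize $\PFD q(\eta)$ by $t=(t_1,\dots,t_d)\in[0,1)^d$ via $t\mapsto q\bigl(\exp(t_1 K^{2^d}\tilde X_1)\cdots\exp(t_d K^{2}\tilde X_d)\bigr)q(\eta)$ as in Definition~\ref{DiscreteSet}. Running the computation from the proof of Proposition~\ref{Orthonormality} with $\vartheta'=e$ --- i.e.\ combining \eqref{ExpLHSCoord}, \eqref{ExpRHSCoord2} and \eqref{tues1} --- one obtains on this cell
\begin{align*}
\pi(\vartheta\eta^{-1})1_\PFD(t)=\psi_\eta(t)\,c_\vartheta\,e^{2\pi i\bracket{v(t)}{w(\vartheta)}},\qquad \bracket{v(t)}{w(\vartheta)}=\sum_{j=1}^d v_j(t)\,w_j(\vartheta),
\end{align*}
where $\psi_\eta\colon[0,1)^d\to\T$ is unimodular and independent of $\vartheta$ (it collects the factor $e^{2\pi i\bracket{l}{\log p(\,\cdot\,)}}$), $c_\vartheta\in\T$, and
\begin{align*}
v_j(t)=t_j+\alpha_j(t_1,\dots,t_{j-1}),\qquad w_j(\vartheta)=\vartheta_j+\beta_j(\vartheta_{j+1},\dots,\vartheta_d)
\end{align*}
for suitable real polynomials $\alpha_j,\beta_j$ ($\alpha_1$ a constant). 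The scaling powers of $K$ cancel in $\bracket{v(t)}{w(\vartheta)}$ precisely by the choice \eqref{satur2} (cf.\ Remark~\ref{ReparametrizingChR}); the triangular shape of the $v_j$ comes from Theorem~\ref{AdaptedChevalleyRosenlicht}(iii) together with Lemma~\ref{PolyQL}, and that of the $w_j$ from Lemma~\ref{LemPropStrongMC}; note in particular $w_d(\vartheta)=\vartheta_d$. Since $g\mapsto\overline{\psi_\eta}\,g$ is unitary on $\L{2}{\PFD q(\eta)}$, it is enough to prove that $\{t\mapsto e^{2\pi i\bracket{v(t)}{w(\vartheta)}}\mid\vartheta\in\Z^d\}$ is complete in $\L{2}{[0,1)^d}$.

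For this I would prove, by induction on $d$, the self-contained statement $P(d)$: for all real polynomials $\alpha_j$ (in $t_1,\dots,t_{j-1}$) and $\beta_j$ (in $\vartheta_{j+1},\dots,\vartheta_d$), the system $\{t\mapsto e^{2\pi i\bracket{v(t)}{w(\vartheta)}}\mid\vartheta\in\Z^d\}$ with $v_j=t_j+\alpha_j$ and $w_j=\vartheta_j+\beta_j$ is complete in $\L{2}{[0,1)^d}$. The case $d=1$ is the Fourier basis of $\L{2}{[0,1)}$ (up to the unimodular constants $e^{2\pi i\alpha_1(\vartheta_1+\beta_1)}$). For the step, observe that $t_d$ enters $\bracket{v}{w}$ only through the term $v_d(t)w_d(\vartheta)=(t_d+\alpha_d(t_1,\dots,t_{d-1}))\vartheta_d$, and there only through the factor $e^{2\pi i t_d\vartheta_d}$, because $w_d(\vartheta)=\vartheta_d$; the remainder of $e^{2\pi i\bracket{v}{w}}$ depends only on $(t_1,\dots,t_{d-1})$ and $\vartheta$. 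Hence, if $f\in\L{2}{[0,1)^d}$ is orthogonal to every $e^{2\pi i\bracket{v}{w}}$, then fixing $\vartheta_d=m\in\Z$ and letting $(\vartheta_1,\dots,\vartheta_{d-1})$ range over $\Z^{d-1}$, Fubini rewrites the orthogonality relations --- after passing to the $m$-th Fourier coefficient $\widehat{f}_m$ of $f$ in $t_d$ --- as the $P(d-1)$-orthogonality relations for $t'\mapsto\widehat{f}_m(t')\,e^{-2\pi i m\alpha_d(t')}\in\L{2}{[0,1)^{d-1}}$ against the $(d-1)$-dimensional system built from $\alpha_1,\dots,\alpha_{d-1}$ and $\beta_j(\,\cdot\,,m)$, $j=1,\dots,d-1$. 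By the inductive hypothesis $\widehat{f}_m\,e^{-2\pi i m\alpha_d(\,\cdot\,)}=0$ a.e., so $\widehat{f}_m=0$ for each $m$, and therefore $f=0$. Applying $P(d)$ on each cell and reassembling the cells finishes the proof.

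The hard part is the single-cell model, i.e.\ extracting from the Kirillov realization the exact triangular/unipotent normal form of the phase $\bracket{v(t)}{w(\vartheta)}$: this is precisely where the Chevalley--Rosenlicht parametrization and the Ch-R-admissibility of the Malcev basis are used, and it is what turns a cell into a genuine deformation of a Gabor cell (in the Heisenberg case all $\alpha_j,\beta_j$ vanish and one recovers Example~\ref{MainThmEuclVersion}). Once the phase is in this form, both the orthonormality (already established in Proposition~\ref{Orthonormality}) and the completeness reduce, one variable at a time, to the one-dimensional Fourier basis. A secondary but genuine point is the bookkeeping of the scaling factors $K$, which \eqref{satur2} is designed to make cancel in $\bracket{v(t)}{w(\vartheta)}$.
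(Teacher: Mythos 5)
Your argument is correct and takes essentially the same route as the paper's proof: the same decomposition of $\L{2}{\PID\rquo G}$ over the cells $\PFD\, q(\eta)$, the same removal of the $\vartheta$-independent unimodular factor, and the same reduction of completeness on a single cell to iterated one-dimensional Fourier completeness via the triangular normal form of the phase supplied by Theorem~\ref{AdaptedChevalleyRosenlicht} together with Lemmas~\ref{LemPropStrongMC} and~\ref{PolyQL}. The only difference is cosmetic: you peel the variables from $t_d$ downward (using $w_d(\vartheta)=\vartheta_d$), whereas the paper peels from $t_1$ upward (using $P_1=0$); both orders succeed because the $v_j$ are lower-triangular in $t$ and the $w_j$ upper-triangular in $\vartheta$.
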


	\begin{proof}
By Corollary~\ref{QLQuoGr}  $q(\Gamma')$ is a quasi-lattice (a uniform subgroup)  of $\PID
\rquo G$ with fundamental domain $F = q(\Sigma )$,  
the sets $F q(\eta ) =
q(\Sigma \eta ), \eta \in \Gamma '_{\QA}$, form a partition of $M \rquo G$. It suffices to show that for
each $\eta \in \Gamma'_\QA$ the orthogonal  subsystem $\{ 
\pi(\vartheta \eta^{-1}) \,
1_\PFD \mid \vartheta \in \Gamma'_\PID \}$ is complete in $\L{2}{q(
  \Sigma \eta )}$. Thus, fix an arbitrary $\eta \in \Gamma '_{\QA}$. Combining \eqref{TFShift} and the change of variables $h \mapsto h \eta^{-1} =: h'$ from the proof of Proposition~\ref{Orthonormality},
we have 
	\begin{align*}
		\Bigl( \pi(\gamma)1_\PFD \Bigr) \big(q(h) \bigr) = e^{2 \pi i \bracket{\coAd (\eta^{-1}
                    h'^{-1})l}{\log(\vartheta)}} \, e^{2 \pi i
                  \bracket{l}{\log\bigl(p(h')\bigr)}} \, 1_\PFD \bigl(
                q(h') \bigr). 
	\end{align*}
Since the map $f(h) \mapsto e^{-2 \pi i
                  \bracket{l}{\log\bigl(p(h')\bigr)}} f(h) = e^{-2 \pi i
                  \bracket{l}{\log\bigl(p(h\eta \inv )\bigr)}} f(h) $  is
                unitary on $L^2(Fq(\eta ))$, it suffices to
                show that the set
	\begin{align*}
		\{ e^{2 \pi i \bracket{\coAd (\eta^{-1}
                    h'^{-1})l}{\log(\vartheta)}} \,  1_\PFD \bigl(
                q(h') \bigr) \mid \vartheta \in \Gamma _\PID ' \}
	\end{align*}
is complete in $\L{2}{\PFD}$. For  $f \in \L{2}{\PFD}$  this means that
	\begin{align}
		\int_{\PFD} f \bigl(q(h') \bigr) \hspace{2pt} e^{2 \pi i \bracket{\coAd (\eta^{-1} h'^{-1})l}{\log(\vartheta)}} \,d\mu'_{\PID \rquo G} \bigl(q(h') \bigr) = 0 \hspace{10pt} \forall \vartheta \in \Gamma _\PID '
		\label{CheckCompl}
	\end{align}
implies $f = 0$ a.e. Note that by \eqref{ExpLHSCoord}, we have
	\begin{align*}
		l \cdot \eta^{-1}  h'^{-1} &= l + \sum_{j=1}^d \Bigl( t_j + \eta_j + P_j(\eta_{j-1}, \ldots, \eta_1, t_{j-1}, \ldots, t_1) \Bigr) K^{2^{d+1-j}} Y^*_j 
	\end{align*}
for some polynomials $P_j$, and by \eqref{ExpRHSCoord2} with $\theta '
= 0$, we have
	\begin{align*}
		\log \bigl( \vartheta \bigr) = \sum_{j=1}^d \Bigl( \vartheta_j + \tilde{R}_{Y_j}(\vartheta_{j+1}, \ldots, \vartheta_d) \Bigr) K^{-2^{d+1-j}} Y_j.
	\end{align*}
Consequently,
	\begin{align}
		e^{2 \pi i \bracket{\coAd (\eta^{-1} h'^{-1})l}{\log(\vartheta)}} = \prod_{j=1}^d e^{2 \pi i \bigl( t_j + \eta_j + P_j(\eta_{j-1}, \ldots, t_1) \bigr) \bigl( \vartheta_j + \tilde{R}_{Y_j}(\vartheta_{j+1}, \ldots, \vartheta_d) \bigr)}. \label{BVCoord}
	\end{align}
To verify \eqref{CheckCompl}, we therefore switch to the coordinates
	\begin{align*}
		\psi_\QA(t_1, \ldots, t_d) := q \bigl( \exp(t_1  K^{2^d} \tilde{X}_1) \dots \exp(t_d  K^{2} \tilde{X}_d) \bigr)
	\end{align*}
from $[0,1)^d$ to $\PFD$. Substituting \eqref{BVCoord} into
\eqref{CheckCompl}, it is equivalent to show that
	\begin{align}
		\int_{[0, 1)^d} f(t_1, \ldots, t_d) \hspace{2pt}  \prod_{j=1}^d e^{2 \pi i \bigl( t_j + \eta_j + P_j(\eta_{j-1}, \ldots, t_1) \bigr) \bigl( \vartheta_j + \tilde{R}_{Y_j}(\vartheta_{j+1}, \ldots, \vartheta_d) \bigr)} \,dt_1 \ldots \,dt_d = 0
		\hspace{10pt} \forall \vartheta \in \Gamma _\PID '
		\label{CheckCompl2}
	\end{align}
implies $f = 0$ a.e.

Thus, suppose \eqref{CheckCompl2} holds true. First, we may omit the constant factors $e^{2 \pi i \eta_j \bigl( \vartheta_j + \tilde{R}_{Y_j}(\vartheta_{j+1}, \ldots, \zeta_d) \bigr)}$ of modulus one. Since $P_1 = 0$, we may rewrite \eqref{CheckCompl2} as
	\begin{align*}
		\int_0^1 \Bigl[ \int_{[0, 1)^{d-1}} f(t_1, \ldots,
                t_d) \hspace{2pt} & \prod_{j=2}^d e^{2 \pi i \bigl( t_j
                  + P_j(\eta_{j-1}, \ldots, \eta_1, t_{j-1}, \ldots,
                  t_1) \bigr) \bigl( \vartheta_j +
                  \tilde{R}_{Y_j}(\vartheta_{j+1}, \ldots,
                  \vartheta_d) \bigr)} \,dt_2 \ldots \,dt_d \Bigr] \\ 
		& \hspace{10pt} \times \, e^{2 \pi i t_1
                  \tilde{R}_{Y_1}(\vartheta_2, \ldots, \vartheta_d) }
                \hspace{2pt} e^{2 \pi i t_1 \vartheta_1} \,dt_1 = 0. 
	\end{align*}
As this holds true for all $\vartheta_1 \in \Z$, it follows that
	\begin{align*}
		0 &= \int_{[0, 1)^{d-1}} f(t_1, \ldots, t_d) \hspace{2pt} \prod_{j=2}^d e^{2 \pi i \bigl( t_j + P_j(\eta_{j-1}, \ldots, \eta_1, t_{j-1}, \ldots, t_1) \bigr) \bigl( \vartheta_j + \tilde{R}_{Y_j}(\vartheta_{j+1}, \ldots, \vartheta_d) \bigr)} \,dt_2 \ldots \,dt_d \\
		&\hspace{15pt} \times e^{2 \pi i t_1 \tilde{R}_{Y_1}(\vartheta_2, \ldots, \vartheta_d)} \\
		&= e^{2 \pi i t_1 \tilde{R}_{Y_1}(\vartheta_2, \ldots, \vartheta_d)} \hspace{2pt} e^{2 \pi i P_2(\eta_1, t_1) \bigl( \vartheta_2 + \tilde{R}_{Y_2}(\vartheta_3, \ldots, \vartheta_d) \bigr)} \hspace{2pt} \int_{[0, 1)^{d-1}} f(t_1, \ldots, t_d) \hspace{2pt}  \\
		&\hspace{10pt} \times  \prod_{j=3}^d e^{2 \pi i \bigl(
                  t_j + P_j(\eta_{j-1}, \ldots, \eta_1, t_{j-1},
                  \ldots, t_1) \bigr) \bigl( \vartheta_j +
                  \tilde{R}_{Y_j}(\vartheta_{j+1}, \ldots,
                  \vartheta_d) \bigr)} \hspace{2pt} e^{2 \pi i t_2
                  \vartheta_2} \,dt_3 \ldots \,dt_d \,dt_2 
	\end{align*}
for almost all $t_1 \in [0,1)$. 
The unimodular factor outside the integral can be deleted and we apply the same argument to the remaining integrals. Repeating this procedure (induction), we arrive at
	\begin{align}
		0 &=\int_{[0, 1)^{d-k}} f(t_1, \ldots, t_d) \hspace{2pt} \prod_{j=k+2}^d e^{2 \pi i \bigl( t_j + P_j(\eta_{j-1}, \ldots, \eta_1, t_{j-1}, \ldots, t_1) \bigr) \bigl( \vartheta_j + \tilde{R}_{Y_j}(\vartheta_{j+1}, \ldots, \vartheta_d) \bigr)} \nn \\
		&\hspace{15pt} \times e^{2 \pi i t_{k+1} \vartheta_{k+1}} \,dt_{k+2} \ldots \,dt_d \,dt_{k+1} \label{CheckCompl3}
	\end{align}
for all $\vartheta_{k+1} \in \Z$ and almost all $(t_1, \dots ,
t_{k})\in [0,1)^k$. Consequently,
	\begin{align*}
		0 &= \int_{[0, 1)^{d-k-1}} f(t_1, \ldots, t_d) \hspace{2pt} \prod_{j=k+2}^d e^{2 \pi i \bigl( t_j + P_j(\eta_{j-1}, \ldots, \eta_1, t_{j-1}, \ldots, t_1) \bigr) \bigl( \vartheta_j + \tilde{R}_{Y_j}(\vartheta_{j+1}, \ldots, \vartheta_d) \bigr)} \,dt_{k+2} \ldots \,dt_d \\
		&= e^{2 \pi i t_{k+2} \tilde{R}_{Y_{k+2}}(\vartheta_{k+3}, \ldots, \vartheta_d)} \hspace{2pt} e^{2 \pi i P_{k+2}(\eta_{k+1}, \ldots, \eta_1, t_{k+1}, \ldots, t_1) \bigl( \vartheta_{k+2} + \tilde{R}_{Y_{k+2}}(\vartheta_{k+3}, \ldots, \vartheta_d) \bigr)} \\
		&\hspace{10pt} \times \int_{[0, 1)^{d-k-1}} f(t_1, \ldots, t_d) \hspace{2pt} \prod_{j=k+3}^d e^{2 \pi i \bigl( t_j + P_j(\eta_{j-1}, \ldots, \eta_1, t_{j-1}, \ldots, t_1) \bigr) \bigl( \vartheta_j + \tilde{R}_{Y_j}(\vartheta_{j+1}, \ldots, \vartheta_d) \bigr)} \\
		 &\hspace{10pt} \times e^{2 \pi i t_{k+2} \vartheta_{k+2}} \,dt_{k+3} \ldots \,dt_d \,dt_{k+2},
	\end{align*}
for almost all $(t_1, \dots ,
t_{k+1})\in [0,1)^{k+1}$,  
which in turn implies the next step
	\begin{align*}
		0 &=\int_{[0, 1)^{d-k-1}} f(t_1, \ldots, t_d) \hspace{2pt} \prod_{j=k+3}^d e^{2 \pi i \bigl( t_j + P_j(\eta_{j-1}, \ldots, \eta_1, t_{j-1}, \ldots, t_1) \bigr) \bigl( \vartheta_j + \tilde{R}_{Y_j}(\vartheta_{j+1}, \ldots, \vartheta_d) \bigr)} \\
		&\hspace{15pt} \times e^{2 \pi i t_{k+2} \vartheta_{k+1}} \,dt_{k+3} \ldots \,dt_d \,dt_{k+2}
	\end{align*}
for all $\vartheta_{k+2} \in \Z$. For the last step $k = d-1$ we obtain from \eqref{CheckCompl3}
	\begin{align*}
		\int_{[0, 1)^{d-(d-1)}} f(t_1, \ldots, t_d) \hspace{2pt} \prod_{j=(d-1)+2}^d \ldots \hspace{15pt} e^{2 \pi i t_d \vartheta_d} \,dt_d = 0
	\end{align*}
for all $\vartheta_d \in \Z$ and almost all $(t_1, \dots ,
t_{d-1})\in [0,1)^{d-1}$, from which we obtain $f(t_1, \ldots, t_d) =
0$ a.e. This proves the completeness.

	\end{proof}

In the case when  $\Gamma$ is a quasi-lattice or  a uniform subgroup,
we can give  a
shorter and more elegant proof of completeness. In particular, the
following observation yields a more structural proof of
Theorem~\ref{TVMainThm} under the assumptions (i) --- (v). 

	\begin{lem}
\noindent Let $G$ be a nilpotent Lie group. Let $\Gamma$ be a
quasi-lattice in $G/Z(G)$ with fundamental domain $\Sigma$, $\pi \in 
SI/Z(G)$ and  $w \in \RS$ with $\Norm{\RS}{w} = 1$,  such that
  the set $\{\pi(\gamma)w \mid \gamma \in \Gamma \}$ forms an orthonormal system in
$\RS$. Then the following are equivalent:
	\begin{itemize}
		\item[(i)] $\{ \pi(\gamma)w \mid \gamma \in \Gamma \}$
                  is complete in $\HS _\pi$, and thus $\{\pi(\gamma)w \mid \gamma \in \Gamma \}$ is an \onb .
		\item[(ii)] $\Gmeas{G/Z(G)}{\Sigma}^{-1} = \fd$.
	\end{itemize}

	\end{lem}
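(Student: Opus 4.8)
The strategy is to play off the frame operator of the orthonormal system $\{\pi(\gamma)w\}_{\gamma\in\Gamma}$ against the ``continuous resolution of the identity'' furnished by the orthogonality relations \eqref{fd}. Introduce the analysis operator $C\colon\RS\to\ell^2(\Gamma)$, $Cf=\bigl(\bracket{f}{\pi(\gamma)w}\bigr)_{\gamma\in\Gamma}$, which is bounded by Bessel's inequality. Because $\{\pi(\gamma)w\}_{\gamma\in\Gamma}$ is orthonormal we have $CC^{*}=\id_{\ell^2(\Gamma)}$, hence $S:=C^{*}C$ is the orthogonal projection of $\RS$ onto $\overline{\mathrm{span}}\{\pi(\gamma)w\mid\gamma\in\Gamma\}$; in particular the system is complete (equivalently, an orthonormal basis) if and only if $S=\id_{\RS}$. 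We will show that this happens precisely when $\mu_{G/Z(G)}(\Sigma)=\fd^{-1}$. Note that only the tiling property $G/Z(G)=\bigcup_{\gamma\in\Gamma}\Sigma\gamma$ (a disjoint union) and the orthonormality enter the argument; it is irrelevant whether $\Gamma$ is a subgroup.

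Next, the orthogonality relations \eqref{fd} with $v_1=v_2=w$ and $\Norm{\RS}{w}=1$ give, in the weak operator sense, the identity
\begin{equation} \label{eq:resid}
  T:=\int_{G/Z(G)}\bracket{\,\cdot\,}{\pi(\dot g)w}\,\pi(\dot g)w\;d\dot g=\fd^{-1}\,\id_{\RS};
\end{equation}
the operator-valued integrand here is genuinely a function on $G/Z(G)$, since $\pi$ restricted to the center is a unitary character, so $T$ is well defined. On the other hand, because $G/Z(G)$ is unimodular and $\Sigma$ is a fundamental domain for $\Gamma$, we may unfold the integral over the translates $\Sigma\gamma$ and, writing $\pi(\dot s\gamma)=\pi(\dot s)\pi(\gamma)$ on a fixed choice of lifts (the unimodular ambiguity cancelling in the bilinear expression), obtain
\begin{equation} \label{eq:resid2}
  T=\int_{\Sigma}\pi(\dot s)\,S\,\pi(\dot s)^{*}\;d\dot s,
\end{equation}
the interchange of sum and integral being legitimate since the summands are uniformly Bessel-bounded and $\mu_{G/Z(G)}(\Sigma)<\infty$. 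Subtracting \eqref{eq:resid} from $\int_{\Sigma}\pi(\dot s)\pi(\dot s)^{*}\,d\dot s=\mu_{G/Z(G)}(\Sigma)\,\id_{\RS}$ and putting $P:=\id_{\RS}-S$ yields the key relation
\begin{equation} \label{eq:key}
  \int_{\Sigma}\pi(\dot s)\,P\,\pi(\dot s)^{*}\;d\dot s=\bigl(\mu_{G/Z(G)}(\Sigma)-\fd^{-1}\bigr)\,\id_{\RS}.
\end{equation}

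From \eqref{eq:key} both implications drop out. If $(i)$ holds, then $S=\id_{\RS}$, so $P=0$, the left-hand side of \eqref{eq:key} vanishes, and $\mu_{G/Z(G)}(\Sigma)=\fd^{-1}$. Conversely, assume $(ii)$. Then the right-hand side of \eqref{eq:key} is $0$, whereas for every $v\in\RS$ the integrand is positive, $\bracket{\pi(\dot s)P\pi(\dot s)^{*}v}{v}=\Norm{\RS}{P\pi(\dot s)^{*}v}^{2}\ge 0$. Pairing \eqref{eq:key} with the vectors of a fixed orthonormal basis $\{e_n\}_{n\in\N}$ of $\RS$ and discarding a countable union of null sets, we conclude that $P\pi(\dot s)^{*}e_n=0$ for all $n$ and a.e.\ $\dot s\in\Sigma$, hence $P\pi(\dot s)^{*}=0$ for a.e.\ such $\dot s$. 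Since $\Sigma$ has positive measure such a $\dot s$ exists, and as $\pi(\dot s)^{*}$ is a surjective isometry this forces $P=0$, i.e.\ $S=\id_{\RS}$ and the system is complete. I expect the only genuinely delicate point to be bookkeeping rather than mathematics: one must check that all operator-valued integrands descend to $G/Z(G)$ (which uses that $\pi|_{Z(G)}$ is a scalar of modulus one) and justify the Fubini step in passing from \eqref{eq:resid} to \eqref{eq:resid2}; the underlying estimates are just Bessel's inequality together with the finiteness of $\mu_{G/Z(G)}(\Sigma)$.
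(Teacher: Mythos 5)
Your proof is correct and is essentially the paper's argument in operator-theoretic clothing: restricting your key identity \eqref{eq:key} to the diagonal $\bracket{\,\cdot\,v}{v}$ recovers exactly the paper's computation $\fd^{-1}\Norm{\RS}{v}^2=\int_{\Sigma}\sum_{\gamma\in\Gamma}\Abs{\subbracket{\RS}{v}{\pi(\dot g\gamma)w}}^2\,d\dot g$ together with the positivity argument via Bessel for the converse direction. The only difference is packaging (frame operator and projections versus quadratic forms), plus a slightly more careful final step deducing $P=0$ from $P\pi(\dot s)^{*}=0$ for a.e.\ $\dot s$.
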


	\begin{proof}
$(i) \Rightarrow (ii)$ If $\{ \pi(\gamma)w \mid \gamma \in \Gamma \}$
is an orthonormal basis, then for every $v \in \RS$ and $ \dot{g} \in G/Z(G)$ we have 
	\begin{align*}
		\Norm{\RS}{v}^2 = \Norm{\RS}{\pi(\dot{g})^*v}^2 =
		\sum_{\gamma \in \Gamma}
		\Abs{\subbracket{\RS}{\pi(\dot{g})^*v}{\pi(\gamma)w}}^2= \sum_{\gamma \in \Gamma}
		\Abs{\subbracket{\RS}{v}{\pi(\dot{g} \gamma)w}}^2 . 
	\end{align*}
Hence, square-integrability yields
	\begin{align*}
		\fd^{-1} \, \Norm{\RS}{v}^2 &= \int_{G/Z(G)} \Abs{\subbracket{\RS}{v}{\pi(\dot{g})w}}^2 \,d\dot{g} \\
		&= \int_{\Sigma} \sum_{\gamma \in \Gamma} \Abs{\subbracket{\RS}{v}{\pi(\dot{g} \gamma)w}}^2 \,d\dot{g} \\
		&= \int_{\Sigma} \Norm{\RS}{v}^2 \,d\dot{g} \\
		&= \Gmeas{G/Z(G)}{\Sigma} \, \Norm{\RS}{v}^2.
	\end{align*}

$(ii) \Rightarrow (i)$ By Bessel's inequality we have
	\begin{align*}
		\Norm{\RS}{v}^2 = \Norm{\RS}{\pi(\dot{g})^*v}^2 \geq \sum_{\gamma \in \Gamma} \Abs{\subbracket{\RS}{v}{\pi(\dot{g} \gamma)w}}^2
	\end{align*}
for all $\dot{g} \in \Sigma$. Since $\dot{g} \mapsto \sum_{\gamma \in \Gamma}
\Abs{\subbracket{\RS}{v}{\pi(\dot{g} \gamma)w}}^2$ is in
$\L{2}{\Sigma, d\dot{g}}$ and $d_\pi = \Gmeas{G/Z(G)}{\Sigma}^{-1}$, we can therefore estimate 
	\begin{align*}
		0 &= \Norm{\RS}{v}^2 - \fd \int_{G/Z(G)} \Abs{\subbracket{\RS}{v}{\pi(\dot{g})w}}^2 \,d\dot{g} \\
		&= \fd \int_\Sigma \Norm{\RS}{v}^2 \,d\dot{g} - \fd \,  \int_{\Sigma} \sum_{\gamma \in \Gamma} \Abs{\subbracket{\RS}{v}{\pi(\dot{g} \gamma)w}}^2 \,d\dot{g} \\
		&= \fd \, \int_{\Sigma} \left( \Norm{\RS}{v}^2 -
		\sum_{\gamma \in \Gamma}
		\Abs{\subbracket{\RS}{v}{\pi(\dot{g} \gamma)w}}^2 \right) \,d\dot{g}.
	\end{align*}
Since the integrand is non-negative, this  implies that
$\Norm{\RS}{v}^2 - \sum_{\gamma \in \Gamma}
\Abs{\subbracket{\RS}{v}{\pi(\dot{g} \gamma)w}}^2 = 0$ for almost all
$\dot{g} \in \Sigma$, thus $\{ \pi(\gamma)w \mid \gamma \in \Gamma \}$ is
complete. 
	\end{proof}

\subsection{Graded $SI/Z$-Groups} \label{GrSIZGrSubs}

In this subsection we show that the technical assumptions of
Theorem~\ref{TVMainThm} are satisfied for graded $SI/Z$-groups with
one-dimensional center.   
We first prove several properties of graded groups. 

In the following $G$ is a graded $SI/Z$-group
of $\dim(G) = r + 2d$ with $r$-dimensional center $Z(G)$. Let $\Lie{g}
= \bigoplus_{k=1}^N \Lie{g}_k$ be a  gradation of $\Lie{g}$  with the
understanding that $\Lie{g}_N \neq \{0\}$ and  $N$ is the smallest index
such that $\Lie{g}_{N + k} = \{ 0 \}$ for all $k \in \N$.

	\begin{lem} \label{thurs3}
Let $\Lie{g}$ be a graded Lie algebra with gradation $\Lie{g} =
\bigoplus_{k=1}^N \Lie{g}_k$. 

(i) Then $\Lie{n}_{k_0} = \bigoplus _{k=k_0}^N \Lie{g}_k$ is an ideal of
$\Lie{g}$ for each  $k_0 = 1, \dots , N$. 

(ii)  $\Lie{g}_N \subseteq \Liez{g}$.  In
particular, if $\dim(\Liez{g}) = 1$, then $\Lie{g}_N = \Liez{g}$. 

(iii)  Every basis of $\Lie{g}$ that is  a union of bases of the
$\Lie{g}_k$ is a strong Malcev basis for $\Lie{g}$. 
	\end{lem}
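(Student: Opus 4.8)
The plan is to derive all three assertions directly from the defining relation $[\Lie{g}_k,\Lie{g}_{k'}] \subseteq \Lie{g}_{k+k'}$, together with the convention that $\Lie{g}_m = \{0\}$ whenever $m > N$. For (i), write an arbitrary $X \in \Lie{g}$ as $X = \sum_{j=1}^N X_j$ with $X_j \in \Lie{g}_j$ and an arbitrary $Y \in \Lie{n}_{k_0}$ as $Y = \sum_{k=k_0}^N Y_k$ with $Y_k \in \Lie{g}_k$. Then $[X,Y] = \sum_{j,k}[X_j,Y_k]$, and since $[X_j,Y_k] \in \Lie{g}_{j+k}$ with $j+k \geq k_0+1$, every summand lies in $\bigoplus_{m \geq k_0+1}\Lie{g}_m$. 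Hence
\[
	[\Lie{g},\Lie{n}_{k_0}] \subseteq \Lie{n}_{k_0+1} \subseteq \Lie{n}_{k_0},
\]
which shows at once that $\Lie{n}_{k_0}$ is a subalgebra and an ideal. The sharper inclusion $[\Lie{g},\Lie{n}_{k_0}] \subseteq \Lie{n}_{k_0+1}$ --- brackets strictly raise the degree --- is the point I would isolate for later use in (iii).

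For (ii), apply the bracket relation with $k = N$: for $X \in \Lie{g}_N$ and any homogeneous $Y_j \in \Lie{g}_j$ with $j \geq 1$ one gets $[X,Y_j] \in \Lie{g}_{N+j} = \{0\}$, so $X$ commutes with all of $\Lie{g}$ and therefore $\Lie{g}_N \subseteq \Liez{g}$. If $\dim\Liez{g} = 1$, then since $\Lie{g}_N \neq \{0\}$ by the standing convention on $N$, the inclusion $\{0\} \neq \Lie{g}_N \subseteq \Liez{g}$ between a nonzero subspace and a line forces $\Lie{g}_N = \Liez{g}$.

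For (iii), take a basis which is a union of bases $\mathcal{B}_k$ of the summands $\Lie{g}_k$, and order it so that the vectors of $\mathcal{B}_N$ come first, then those of $\mathcal{B}_{N-1}$, and so on down to $\mathcal{B}_1$; the order within each block is immaterial. Call the result $\{X_1,\dots,X_\dimG\}$. For any $m$, let $k_0$ be the block containing $X_m$; then $\Rspan{X_1,\dots,X_m} = \Lie{n}_{k_0+1} \oplus V$, where $V \subseteq \Lie{g}_{k_0}$ is spanned by the initial segment of $\mathcal{B}_{k_0}$ occurring among $X_1,\dots,X_m$. To see this space is an ideal, take $Z \in \Lie{g}$: by (i), $[Z,\Lie{n}_{k_0+1}] \subseteq \Lie{n}_{k_0+1}$, while for $v \in V \subseteq \Lie{g}_{k_0}$ the sharp inclusion above gives $[Z,v] \in [\Lie{g},\Lie{g}_{k_0}] \subseteq \Lie{n}_{k_0+1}$. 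Both contributions land in $\Lie{n}_{k_0+1} \subseteq \Rspan{X_1,\dots,X_m}$, so each prefix $\Rspan{X_1,\dots,X_m}$ is an ideal of $\Lie{g}$ and the ordered basis is a strong Malcev basis.

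The only step that calls for any care is (iii): one must describe a prefix of the ordered basis when $m$ does not fall on a block boundary, and observe that the decisive ingredient is not merely that the $\Lie{n}_{k_0}$ are ideals, but the stronger fact isolated in (i) --- that a single bracket with $\Lie{g}$ carries $\Lie{g}_{k_0}$ all the way into $\Lie{n}_{k_0+1}$, so that it does not matter how the vectors inside each block $\mathcal{B}_{k_0}$ are arranged.
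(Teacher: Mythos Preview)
Your proof is correct and follows essentially the same approach as the paper's: all three parts are derived directly from the bracket relation $[\Lie{g}_k,\Lie{g}_{k'}]\subseteq\Lie{g}_{k+k'}$, and in (iii) both arguments show that a prefix of the form $V\oplus\Lie{n}_{k_0+1}$ with $V\subseteq\Lie{g}_{k_0}$ is an ideal because bracketing with $\Lie{g}$ sends $\Lie{g}_{k_0}$ into $\Lie{n}_{k_0+1}$. Your write-up is more explicit than the paper's about the ordering convention and about handling a prefix that ends mid-block, but the underlying idea is identical.
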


	\begin{proof}
(i) follows directly from the definition of a gradation. 

(ii) For every $X \in \Lie{g}_k$, $1 \leq k \leq N$, and every $Y \in \Lie{g}_N$ we have $[X, Y] \in \Lie{g}_{N+k} = \{ 0 \}$, hence $Y \in \Liez{g}$.
If $\dim(\Liez{g}) = 1$, then $1 \leq \dim(\Lie{g}_N) \leq
\dim(\Liez{g}) = 1$, so the subspaces must coincide.

(iii) 	Let $\{X_{k_1}, \ldots, X_{k_d}\}$ be a basis of  $\Lie{g}_k \neq \{ 0
\}$ and $X \in \Lie{g}$ be arbitrary. Then    $[X, X_{k_j}]
\in \bigoplus _{k'=1} ^N \Lie{g}_{k + k'}$. Consequently $
\Rspan{X_{k_j}, \ldots X_{k_d}} \bigoplus_{l = k+1}^N  \Lie{g}_k$ is
an ideal,  which proves the strong Malcev
property of the union of bases. 
	\end{proof}

Under the assumption  $\dim(Z(G)) = r = 1$, we construct  polarizing
ideals $\pid \nsubgr \Lie{g}$ which are subordinate to all $l \in
\Liez{g}^*$ simultaneously. This implies that 
all $\pi \in SI/Z(G)$  can be induced from the same polarization $\pid$. 
We write $\Lie{g} = \pid \oplus \qa$  and we  denote by $\pr_{\Lie{g}_k}$
the natural projection $\Lie{g} \to \Lie{g}_k$. We distinguish  the
cases of even and odd $N$. 

	\begin{prop} [N odd] \label{Nodd}
Let $G$ be a graded $SI/Z$-group with one-dimensional center and  gradation $\Lie{g} =
\bigoplus_{k=1}^N \Lie{g}_k$ with $N = 2 N_0 +1$, $N_0 \in \N$.
Then
$\pid := \bigoplus_{k=N_0+1}^N \Lie{g}_{k}$ is an  ideal of $\Lie{g}$
that  is polarizing for all $l \in \Liez{g}^*$. The normal subgroup
$\PID= \exp (\pid )$ therefore  induces all $\pi
\in SI/Z(G)$. 
	\end{prop}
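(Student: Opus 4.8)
The plan is to check three properties of $\pid:=\bigoplus_{k=N_0+1}^{N}\Lie{g}_k$: that it is an ideal, that it is subordinate to every relevant $l$, and that it has the dimension forced on a polarization. The first is immediate: in the notation of Lemma~\ref{thurs3} we have $\pid=\Lie{n}_{N_0+1}$, which is an ideal by Lemma~\ref{thurs3}(i), so $\PID=\exp(\pid)$ is a normal subgroup of $G$. For the second I would observe that $[\pid,\pid]\subseteq\bigoplus_{k,k'\ge N_0+1}\Lie{g}_{k+k'}$, and every index $k+k'$ occurring here satisfies $k+k'\ge 2N_0+2=N+1$; since $\Lie{g}_j=\{0\}$ for $j>N$, this forces $[\pid,\pid]=\{0\}$. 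Thus $\pid$ is abelian, so $l([\pid,\pid])=0$ trivially for \emph{every} $l\in\Lie{g}^*$, and in particular $\pid$ is a subalgebra subordinate to each $l\in\Liez{g}^*$.

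The substantive step is the dimension count. Since $N=2N_0+1\ge N_0+1$, we have $\Lie{g}_N\subseteq\pid$, and Lemma~\ref{thurs3}(ii) combined with $\dim\Liez{g}=1$ gives $\Lie{g}_N=\Liez{g}$; hence $\Liez{g}\subseteq\pid$ and $\pid/\Liez{g}$ is a well-defined subspace of $\Lie{g}/\Liez{g}$. By Theorem~\ref{ThmMW}, for $\pi=\pi_l\in SI/Z(G)$ the form $B_l(\,\cdot\,,\,\cdot\,)=l([\,\cdot\,,\,\cdot\,])$ is symplectic (non-degenerate) on the $2d$-dimensional space $\Lie{g}/\Liez{g}$, and $\pid/\Liez{g}$ is isotropic for it because $\pid$ is abelian. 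I would then set $\qa:=\bigoplus_{k=1}^{N_0}\Lie{g}_k$, so that $\Lie{g}=\qa\oplus\pid$ and, using $\qa\cap\Lie{g}_N=\{0\}$, also $\Lie{g}/\Liez{g}=\bar{\qa}\oplus(\pid/\Liez{g})$ where $\bar{\qa}$ is the image of $\qa$. Recalling that $l\in\Liez{g}^*$ vanishes on $\bigoplus_{k<N}\Lie{g}_k$: for $X\in\Lie{g}_k$ and $Y\in\Lie{g}_{k'}$ with $k,k'\le N_0$ one has $[X,Y]\in\Lie{g}_{k+k'}$ with $k+k'\le 2N_0<N$ (here the oddness of $N$ is used), so $l([X,Y])=0$ and $\bar{\qa}$ is isotropic as well.

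To conclude I would invoke the elementary fact that an isotropic subspace of a $2d$-dimensional symplectic space has dimension at most $d$. As $\dim\bar{\qa}+\dim(\pid/\Liez{g})=\dim(\Lie{g}/\Liez{g})=2d$ and both summands are $\le d$, both equal $d$; in particular $\pid/\Liez{g}$ is Lagrangian and $\dim\pid=d+1$. Since $B_l$ is non-degenerate on $\Lie{g}/\Liez{g}$, the radical $\Lie{g}^l$ of $B_l$ is exactly $\Liez{g}$, so any polarization for $l$ has dimension $\tfrac12(\dim\Lie{g}+\dim\Lie{g}^l)=r+d=d+1$; being an abelian subordinate subalgebra of precisely this dimension, $\pid$ is therefore a polarization for $l$. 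As $\pid$ was defined with no reference to $l$, it polarizes every $l\in\Liez{g}^*$ simultaneously, so all $\pi\in SI/Z(G)$ are induced from $\chi_l$ on $\PID=\exp(\pid)\nsubgr G$. I expect the only real obstacle to be this final symplectic dimension argument; everything else is bookkeeping with the gradation, aside from the harmless degenerate case $N_0=0$ (then $\pid=\Lie{g}$ and there is nothing to prove).
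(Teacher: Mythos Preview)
Your proof is correct. The first two steps (ideal, abelian hence subordinate) are identical to the paper's. The dimension count, however, is argued differently: the paper shows that for each $k=1,\dots,N_0$ the pairing $B_l:\Lie{g}_k\times\Lie{g}_{N-k}\to\R$ is non-degenerate (if $\dim\Lie{g}_{N-k}<\dim\Lie{g}_k$, the matrix $\bigl(l([V_j,W_m])\bigr)$ has a nontrivial kernel, producing a vector in the radical of $B_l$), hence $\dim\Lie{g}_k=\dim\Lie{g}_{N-k}$ and the total dimension splits as $1+2\sum_{k\le N_0}\dim\Lie{g}_k$. You instead observe that both $\pid/\Liez{g}$ and the image of $\qa=\bigoplus_{k\le N_0}\Lie{g}_k$ are isotropic for the symplectic form on $\Lie{g}/\Liez{g}$ (the latter because $k+k'\le 2N_0<N$ forces $l([\Lie{g}_k,\Lie{g}_{k'}])=0$), and two complementary isotropic subspaces of a $2d$-dimensional symplectic space are both Lagrangian. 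Your route is cleaner and more conceptual; the paper's route yields the slightly stronger fact $\dim\Lie{g}_k=\dim\Lie{g}_{N-k}$ with a non-degenerate graded pairing, which it reuses in Proposition~\ref{PropQL} when constructing the Ch-R-admissible basis. One small point: your claim that $l\in\Liez{g}^*$ vanishes on $\bigoplus_{k<N}\Lie{g}_k$ is a choice of representative (the paper makes it explicit by writing $l=\lambda Z^*$); you might state this.
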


	\begin{proof}
Let $l = \lambda Z^*  \in \Liez{g}^* $, $\lambda \neq
0$. By Lemma~\ref{thurs3}(i) the subspace $\bigoplus_{k= k_0}^N
\Lie{g}_k \subseteq \Lie{g}$ is  an ideal of $\Lie{g}$ for
every $1 \leq k_0 \leq N$. Since $[\Lie{g}_k, \Lie{g}_{k'}] \subseteq
\Lie{g}_{k + k'} = \{ 0 \}$ for all $k, k' \geq N_0+1$, 
$\pid$ must be  an Abelian ideal. Consequently, $\pid$ is subordinate to
$l$. 

We show that $ \dim(\Lie{g}_{N-k})= \dim(\Lie{g}_{k})$. Choose a basis $\{V_1, \ldots, V_p
\} $ for $\Lie{g}_{N-k}$ and $\{W_1, \ldots, W_q\}$
 for $\Lie{g}_{k}$ and assume that $p\neq q$, $p<q$ say.  Then the
$p\times q$-matrix $C$ with entries 
$C_{jk} = l([V_j,W_k])$  possesses  a non-trivial kernel $c= (c_1,
\dots , c_q) \in \R ^q$. Set $Y = \sum _{m=1}^q c_m \hspace{2pt} W_m$, then 
$$l([V_j,Y])= \sum _{m=1}^q c_m \hspace{2pt} l([V_j,W_m]) = (Cc)_j = 0 \, .
$$
Consequently, $l([V,Y])= 0 $ for all $V \in \Lie{g}_{N-k}$ and, by the
properties of the gradation, $B_l(X,Y)= l([X,Y])= 0 $ for all $X \in
\Lie{g}$.   
This contradicts the assumption that $B_l$ restricted to $\Lie{g}/\Lie{z}$ is non-degenerate. 

Finally, $\dim(\ggl) = 1 + \sum _{k=1}^N \dim (\ggl _k ) = 1 + 2 \sum
_{k=1}^{N_0} \dim (\ggl _k ) $. It follows that $\dim(\pid) = \sum
_{k=N_0+1}^{N} \dim (\ggl _k) = d +1 $, and thus $\pid $ is a polarization.    


	\end{proof}

	\begin{prop} [N even] \label{Neven}
Let $G$ be a graded $SI/Z$-group with one-dimensional center and  with gradation $\Lie{g} =
\bigoplus_{k=1}^N \Lie{g}_k$ with $N = 2 N_0$, $N_0 \in \N$. Then
there exists an ideal $\pid \subseteq \Lie{n}_{N_0} = \bigoplus_{k=N_0}^N \Lie{g}_{k}
\subseteq \Lie{g}$ that is polarizing for all $l \in
\Liez{g}^*$. Again, the normal subgroup
$\PID= \exp (\pid )$   induces all $\pi
\in SI/Z(G)$.  
	\end{prop}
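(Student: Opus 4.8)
The plan is to mirror the argument of Proposition~\ref{Nodd}, the only new ingredient being that the middle layer $\Lie{g}_{N_0}$ pairs with itself under the form $B_l$ and therefore has to be split along an isotropic subspace before being adjoined.

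First I would record the numerology forced by the grading. By Lemma~\ref{thurs3}(ii), $\Liez{g} = \Lie{g}_N$, so every relevant $l \in \Liez{g}^*$ has the form $l = \lambda Z^*$ with $Z$ spanning $\Lie{g}_N$ and $\lambda \ne 0$ (the case $\lambda = 0$ being excluded by $\pi_l \in SI/Z(G)$), and $l$ vanishes on $\bigoplus_{k<N}\Lie{g}_k$; in particular $B_l = \lambda B_{Z^*}$. Exactly as in the proof of Proposition~\ref{Nodd}, non-degeneracy of $B_l$ on $\Lie{g}/\Liez{g}$ together with $[\Lie{g}_k, \Lie{g}_{k'}] \subseteq \Lie{g}_{k+k'}$ gives $\dim \Lie{g}_{N-k} = \dim \Lie{g}_k$ for $1 \le k \le N-1$. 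Writing $n_k := \dim \Lie{g}_k$ and using $N = 2N_0$, this yields $2d+1 = \dim \Lie{g} = n_N + n_{N_0} + 2\sum_{k=1}^{N_0-1} n_k = 1 + n_{N_0} + 2\sum_{k=1}^{N_0-1} n_k$, so $n_{N_0}$ is even, say $n_{N_0} = 2s$, and $\dim\bigl(\bigoplus_{k=N_0+1}^N \Lie{g}_k\bigr) = 1 + \sum_{k=1}^{N_0-1} n_k = d+1-s$.

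The one genuinely new point is that for each admissible $l = \lambda Z^*$ the alternating form $B_l$ restricted to $\Lie{g}_{N_0}$ is non-degenerate. Indeed, for $k \ne N_0$ one has $[\Lie{g}_{N_0}, \Lie{g}_k] \subseteq \Lie{g}_{N_0+k}$ with $N_0+k \ne N$, and $l$ vanishes there, so $\Lie{g}_{N_0}$ is $B_l$-orthogonal to every other layer; hence the radical of $B_l|_{\Lie{g}_{N_0}}$ is contained in the radical of $B_l$ on $\Lie{g}$, which is $\Liez{g} = \Lie{g}_N$ (Theorem~\ref{ThmMW}), and $\Lie{g}_{N_0} \cap \Lie{g}_N = \{0\}$ since $N_0 < N$. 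As $B_l = \lambda B_{Z^*}$, the isotropic subspaces of $B_l|_{\Lie{g}_{N_0}}$ are the same for all $\lambda \ne 0$, so I would fix once and for all a Lagrangian subspace $\Lie{w} \subseteq \Lie{g}_{N_0}$ for $B_{Z^*}|_{\Lie{g}_{N_0}}$ (of dimension $s$) and set
\[
  \pid := \Lie{w} \oplus \bigoplus_{k=N_0+1}^N \Lie{g}_k \ \subseteq\ \bigoplus_{k=N_0}^N \Lie{g}_k = \Lie{n}_{N_0}.
\]

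It then remains to verify three things, all immediate from the grading. First, $\pid$ is an ideal: $[\Lie{g}_j, \Lie{g}_k] \subseteq \Lie{g}_{j+k} \subseteq \bigoplus_{m>N_0}\Lie{g}_m \subseteq \pid$ for $j \ge 1$, $k > N_0$, and $[\Lie{g}_j, \Lie{w}] \subseteq \Lie{g}_{j+N_0}$, which vanishes for $j > N_0$ and lies in $\bigoplus_{m>N_0}\Lie{g}_m \subseteq \pid$ for $j \le N_0$ (in $\Lie{g}_N$ when $j = N_0$). Second, $\pid$ is subordinate to every admissible $l$: the only possibly non-zero bracket inside $\pid$ is $[\Lie{w}, \Lie{w}] \subseteq \Lie{g}_N$, and $l([\Lie{w},\Lie{w}]) = \lambda B_{Z^*}(\Lie{w},\Lie{w}) = 0$ because $\Lie{w}$ is isotropic. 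Third, $\pid$ is maximal, hence a polarization: $\dim \pid = s + (d+1-s) = d+1 = r+d$, and since $\Liez{g} = \Lie{g}_N \subseteq \pid$, the image of $\pid$ in the symplectic space $\Lie{g}/\Liez{g}$ is a $d$-dimensional isotropic subspace, i.e.\ Lagrangian. Consequently $\pid$ is a polarizing ideal for every $l = \lambda Z^*$, $\lambda \ne 0$, and $\PID = \exp(\pid)$ is a normal subgroup of $G$ that induces all $\pi \in SI/Z(G)$. I expect the main obstacle to be precisely the middle-layer claim — that $B_l|_{\Lie{g}_{N_0}}$ is non-degenerate and that a single Lagrangian can serve all $l$ — after which the rest is bookkeeping with the degrees.
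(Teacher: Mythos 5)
Your proposal is correct and follows essentially the same route as the paper: establish $\dim\Lie{g}_{N-k}=\dim\Lie{g}_k$, deduce that $\dim\Lie{g}_{N_0}$ is even and that $B_l$ restricted to $\Lie{g}_{N_0}$ is non-degenerate, and adjoin a Lagrangian (the paper takes the span of half of a symplectic basis, which is the same thing as your $\Lie{w}$) to $\bigoplus_{k>N_0}\Lie{g}_k$. Your justification of the middle-layer non-degeneracy and of the independence of the Lagrangian from $\lambda$ is somewhat more explicit than the paper's, but the construction and dimension count are identical.
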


	\begin{proof}
Let $l= \lambda Z^* \in \Liez{g}^*, \lambda \neq 0 $. Again we obtain 
$\dim(\Lie{g}_{N-k})= \dim(\Lie{g}_{k}) $ for $k=1, \dots , N_0-1$. Since
$\dim(\Lie{z}(\ggl ))= \dim(\Lie{g}_{N})=1$ and $\dim( \ggl ) $ is odd,
the dimension $\dim(\ggl _{N_0}) $ must be even. If $\ggl _{N_0} =
\{0\}$, then nothing needs to be shown.  If $\dim (\ggl _{N_0}) =
2d_{N_0} \geq 2$, then $B_l\Big| _{\ggl _{N_0}\times \ggl _{N_0}} $,
the restriction of $B_l$ to $\ggl _{N_0}$,   is a  symplectic
form. Therefore $\ggl _{N_0}$ possesses a basis $\{ Y_1, \ldots,
Y_{d_{N_0}}, X_1, \ldots, X_{d_{N_0}} \}$, such that $B_l([Y_j,X_k]) =
\delta _{jk}$ and $B_l([Y_j,Y_k]) =B_l([X_j,X_k]) = 0$ for $j,k=1,
\dots d_{N_0}$. 

Now set  $\pid := \Rspan{Y_1, \ldots,
  Y_{d_{N_0}}} \bigoplus_{k=N_0 + 1}^N \Lie{g}_k$. Then $\pid $  is a
$d+1$-dimensional Abelian ideal of $\Lie{g}$ and  subordinate to $l$,
as claimed. 
	\end{proof}


We next show that the technical assumptions of Theorem~\ref{TVMainThm}
are satisfied for graded groups with one-dimensional center. The main
point is the existence of a Chevalley-Rosenlicht-admissible Malcev
basis.



	\begin{prop} \label{PropQL} [Quasi-Lattice]
Let $G$ be a graded $SI/Z$-group with one-dimensional center. Let $l
\in \Liez{g}^* \setminus \{ 0 \}$ and 
$\pid$ be the polarization constructed in  Proposition~\ref{Nodd} or \ref{Neven}. Then
$\Lie{g}$ 
possesses a Ch-R-admissible basis $\{ Z, Y_1, \ldots, Y_d,
\tilde{X}_d, \ldots, \tilde{X}_1 \}$ of $\Lie{g}$  which passes through
$\pid$ and the gradation and is subordinate to $l$. 

If, in addition, $\ggl $ possesses a rational structure compatible
with the gradation and $l=\lambda Z^*$ for $\lambda \in \Q , \lambda
\neq 0$, then this Ch-R-admissible basis generates the same rational
structure $\ggl _\Q =  \Qspan{ Z, Y_1, \ldots, Y_d,
\tilde{X}_d, \ldots, \tilde{X}_1 }$. 
	\end{prop}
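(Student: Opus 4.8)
The plan is to produce the Ch-R-admissible basis \emph{explicitly} from the gradation, the crucial point being that on a graded Lie algebra with $\dim\Liez{g}=1$ the alternating form $B_l$ is adapted to the gradation. Write $\Lie{g}=\bigoplus_{k=1}^N\Lie{g}_k$ and set $p_k:=\dim\Lie{g}_k$; by Lemma~\ref{thurs3}(ii) we have $\Lie{g}_N=\Liez{g}=\R Z$, and in the rational case we pick $Z$ inside $\Lie{g}_\Q$, so that $l=\lambda Z^*$ with $\lambda\in\Q\setminus\{0\}$. I would first record the following. For $X\in\Lie{g}_k$, $Y\in\Lie{g}_{k'}$ one has $[X,Y]\in\Lie{g}_{k+k'}$, hence $B_l(X,Y)=l([X,Y])=0$ unless $k+k'=N$; and for every $k\ne N_0$ the resulting pairing $\Lie{g}_k\times\Lie{g}_{N-k}\to\R$, $(X,Y)\mapsto l([X,Y])$, is non-degenerate, so in particular $p_k=p_{N-k}$. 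This is exactly the computation already performed in the proofs of Propositions~\ref{Nodd} and~\ref{Neven}: a nonzero vector in the kernel of this pairing on either side would be $B_l$-orthogonal to all of $\Lie{g}$ while lying outside $\Liez{g}$, contradicting the flat-orbit condition (Theorem~\ref{ThmMW}).

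Next I would build grading-adapted dual bases. Recall from Propositions~\ref{Nodd}/\ref{Neven} that $\pid$ consists of the top graded pieces $\Lie{g}_N\oplus\cdots\oplus\Lie{g}_{N_0+1}$ together with, when $N=2N_0$, a Lagrangian half $\Rspan{Y_1,\dots,Y_{d_{N_0}}}$ of $\Lie{g}_{N_0}$, while $\qa$ consists of the bottom pieces $\Lie{g}_{N_0}\oplus\cdots\oplus\Lie{g}_1$ (resp.\ $\Lie{g}_{N_0-1}\oplus\cdots\oplus\Lie{g}_1$ plus the complementary half $\Rspan{X_1,\dots,X_{d_{N_0}}}$ of $\Lie{g}_{N_0}$). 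For each $k$ below the middle I take a basis $\{\xi^k_1,\dots,\xi^k_{p_k}\}$ of the $\qa$-part of $\Lie{g}_k$ together with its $B_l$-dual basis $\{\upsilon^k_1,\dots,\upsilon^k_{p_k}\}$ of the $\pid$-part of $\Lie{g}_{N-k}$, so that $l([\xi^k_i,\upsilon^k_j])=\delta_{ij}$; for $N$ even and $k=N_0$ I use the two halves of a symplectic Darboux basis of $\Lie{g}_{N_0}$ (up to a sign, so again $l([\xi^{N_0}_i,\upsilon^{N_0}_j])=\delta_{ij}$). I then list the $\upsilon$'s in \emph{decreasing} order of degree as $Y_1,\dots,Y_d$, and the $\xi$'s in decreasing order of degree, top first, as $\tilde X_d,\tilde X_{d-1},\dots,\tilde X_1$. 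Because $p_k=p_{N-k}$ and the two families sweep the grading from opposite ends, the block of indices that $\Lie{g}_{N-k}$ receives among the $Y$'s is exactly the block that $\Lie{g}_k$ receives among the $\tilde X$'s; matching the $\xi$'s and $\upsilon$'s within each block by the same bijection yields $l([\tilde X_i,Y_j])=\delta_{ij}$ for all $i,j$. In the rational case all these dual/Darboux bases can be chosen inside $\Lie{g}_{k,\Q}:=\Lie{g}_\Q\cap\Lie{g}_k$: by Definition~\ref{Compatibility} the $\Lie{g}_k$ inherit rational structures with $\Lie{g}_\Q=\bigoplus_k\Lie{g}_{k,\Q}$, and since $\Lie{g}_\Q$ is a $\Q$-subalgebra and $Z\in\Lie{g}_\Q$, the pairing is $\Q$-valued on the rational forms, so it, and the symplectic form on $\Lie{g}_{N_0,\Q}$, admit dual, resp.\ Darboux, bases over $\Q$.

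It then remains to verify the conclusions. Read from left to right, $\{Z,Y_1,\dots,Y_d,\tilde X_d,\dots,\tilde X_1\}$ is a union of bases of $\Lie{g}_N,\Lie{g}_{N-1},\dots,\Lie{g}_1$ taken in decreasing order of degree, hence a strong Malcev basis of $\Lie{g}$ by Lemma~\ref{thurs3}(iii); its first $d+1$ vectors span $\pid$ and, for each $k_0$, an initial segment spans $\bigoplus_{k\ge k_0}\Lie{g}_k$, so it passes through $\pid$ and the gradation. By the first paragraph the matrix $\widetilde A_{ij}:=l([\tilde X_i,Y_j])$ is block diagonal --- only $\Lie{g}_k$ and $\Lie{g}_{N-k}$ couple --- and equals the identity inside each block, hence $\widetilde A=I$; moreover $l([\tilde X_i,\tilde X_j])=0$ (the degrees of $\tilde X_i,\tilde X_j$ sum to at most $2N_0\le N$, the equality case being the Darboux vanishing) and $l([\tilde X_i,Z])=0$, so $l\cdot\tilde X_i=Y_i^*$; in particular the normalization~\eqref{ChoiceXTilde} of Theorem~\ref{AdaptedChevalleyRosenlicht}(ii) holds. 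Conclusion (i) of that theorem --- that $t\mapsto l\cdot\exp(t_d\tilde X_d)\cdots\exp(t_1\tilde X_1)$ is a diffeomorphism onto $l+\Rspan{Y_1^*,\dots,Y_d^*}$ --- then follows by running the Chevalley--Rosenlicht argument from the proof of Theorem~\ref{AdaptedChevalleyRosenlicht} for these $\tilde X_j$, since \eqref{ChoiceXTilde} is exactly the transversality condition it requires (this also yields the triangular coefficient polynomials of (iii)). Hence $\{Z,Y_1,\dots,\tilde X_1\}$ is a Ch-R-admissible Malcev basis subordinate to $l$, passing through $\pid$ and the gradation. Finally, in the rational case $Z$ and all the $Y_j,\tilde X_j$ lie in $\Lie{g}_\Q$; being $n$ vectors of $\Lie{g}_\Q$ that are $\R$-linearly independent, they are $\Q$-linearly independent, hence a $\Q$-basis of the $n$-dimensional $\Q$-algebra $\Lie{g}_\Q$, so $\Lie{g}_\Q=\Qspan{Z,Y_1,\dots,Y_d,\tilde X_d,\dots,\tilde X_1}$.

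The only genuinely non-bookkeeping step is the first one, that $B_l$ pairs $\Lie{g}_k$ non-degenerately with $\Lie{g}_{N-k}$; after that the work is the combinatorics of making the index blocks on the $Y$- and $\tilde X$-sides coincide (the even case, where $\Lie{g}_{N_0}$ is split between $\pid$ and $\qa$, requiring a little extra attention) together with a brief check of conclusion (i), and the rational refinement is then automatic because every choice made above is a choice of a rational dual or Darboux basis.
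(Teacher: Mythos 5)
Your argument is correct and is essentially the paper's own proof: you exploit the non-degenerate $B_l$-pairing between $\Lie{g}_k$ and $\Lie{g}_{N-k}$ (forced by the flat-orbit condition) to replace the given basis of each lower piece by its $B_l$-dual (Darboux in the middle layer when $N$ is even), invoke Lemma~\ref{thurs3}(iii) to see that the resulting union of graded bases is strong Malcev, and observe that the dualizing matrices are rational in the compatible-rational-structure case. Your extra bookkeeping (matching of index blocks, explicit verification that $\widetilde A=I$ and of condition~\eqref{ChoiceXTilde}) only spells out steps the paper leaves implicit.
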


	\begin{proof}
For $k = 1, \ldots, N_0$,  choose a basis $\{ V_1, \ldots, V_p \}$ for
$\Lie{g}_{N-k}$ and a basis $\{ W_1, \ldots, W_q \}$ for
$\Lie{g}_k$. We have already seen that $p = q$.  
The same argument as in Proposition~\ref{Nodd} shows that the $p
\times p$-matrix $C$ with entries $C_{j, k} =  l([W_j,
V_k])$ is invertible. Set $\tilde{W}_j = \sum_{k=1}^p (C \inv)_{j, k}
\hspace{2pt} W_k$, then 
	\begin{align} \label{br3}
		l([\tilde{W}_j, V_k]) = \delta_{j, k}.
	\end{align}
If, in addition, $N = 2 N_0$ is even, we partition the symplectic
basis of $\Lie{g}_{N_0}$ as in the proof of  Proposition~\ref{Neven}. 

Now take the union of bases $\{V_j \}$ of $\Lie{g}_{N-k}$, $k = 0, \ldots, N_0$, and relabel it as $\{Z, \ldots, Y_d \}$, and take the union of bases $\{ \tilde{W}_j \}_j$ of $\Lie{g}_k$, $k = 1, \ldots, N_0$, and relabel it as $\{ \tilde{X}_d, \ldots, \tilde{X}_1 \}$. By construction, we have
	\begin{align*}
		l([\tilde{X}_j, Y_k]) = \delta_{j, k}, \hspace{10pt} j, k = 1, \ldots, d.
	\end{align*}
According to Theorem~\ref{AdaptedChevalleyRosenlicht}, $\{ \tilde{X}_d, \ldots, \tilde{X}_1 \}$ is a basis that parametrizes the orbit $l \cdot G$. At the same time, $\{Z, \ldots, Y_d, \tilde{X}_d, \ldots, \tilde{X}_1 \}$ is a union of bases of the subspaces $\Lie{g}_k$. By Lemma~\ref{thurs3} (iii), this is a strong Malcev basis of $\Lie{g}$. This means that $\Lie{g}$ possesses a Ch-R-admissible basis subordinate to $l = \l Z^*$.

In the presence of a rational structure compatible with the gradation,
the matrix \linebreak  $\langle  Z^*, [W_j, V_k]\rangle_{j,k=1, \dots , p}$ has rational
entries. Thus for $l=\lambda Z^*, \lambda \in \Q$, the new basis
elements $\tilde W_j$ defined by \eqref{br3} are also in $\ggl _\Q$ and
thus $\ggl _\Q =  \Qspan{ Z, Y_1, \ldots, Y_d,
\tilde{X}_d, \ldots, \tilde{X}_1 }$.
	\end{proof}



	\begin{proof} [Proof of Theorem~\ref{MainThmQL}]
By Propositions~\ref{Nodd} and~\ref{Neven} every $l=\lambda Z^*, \lambda \neq
0$ possesses the same polarization $\Lie{m}$, which by
construction is an ideal of $\ggl $.  This is condition  (i) of 
Theorem~\ref{TVMainThm}. By  Proposition~\ref{PropQL} $\ggl $ possesses a
Ch-R-admissible basis subordinate to $l$ passing through $\Lie{m}$ and
$\Lie{z}$, which is condition (ii).  In conclusion,
Theorem~\ref{TVMainThm}  now yields the existence of an orthonormal
basis in the orbit of $\pi _l$. 
	\end{proof}

	\begin{proof} [Proof of Theorem~\ref{MainThmUnif}]
If $l=\lambda Z^*, \lambda \in \Q , \lambda \neq 0$, then by
Proposition~\ref{PropQL} $\ggl _\Q$ has a Ch-R-admissible basis that
is compatible with the gradation. Therefore the additional conditions
(iii) -- (v) of Theorem~\ref{TVMainThm} are satisfied. This implies
the existence of an orthonormal basis  with respect to a uniform
subgroup. In particular, for $l = \pm Z^*$ with the associated
representations $\pi _{\pm}$, there
exist a uniform subgroup $\Gamma _G \subseteq G$ and a set $F
\subseteq  M \rquo G$, such that $\{ \pi _{\pm} (\gamma )1_F \mid
\gamma \in \mathrm{pr} (\Gamma _G)\}$ is an orthogonal basis of
$L^2(M\ \rquo G)$. 

For  $l = \lambda Z^*$ with irrational $\lambda \neq 0$ we use
dilations. The dilation $\delta _s : \ggl \to \ggl $ in ~\eqref{dilgrad}
lifts to an automorphism $\delta _s : G \to G$ and leads to the
one-parameter family of irreducible unitary representations $\pi
_s = \pi _{\pm} \circ \delta _s$. These are irreducible and in
$SI/Z(G)$. Now let $\Gamma _s := \mathrm{pr}(\delta _{s\inv } (\Gamma
_G))$. Then $\Gamma _s$ is a uniform subgroup of $G/Z(G)$ with fundamental domain $\pr \bigl( \delta_{s \inv}(\Sigma) \bigr)$. If $\tilde
\gamma = \delta _{s\inv }(\gamma )$ for $\gamma \in \Gamma _G$, then
$\pi _s(\tilde \gamma ) = \pi _{\pm } \circ \delta _s (\delta _{s\inv
} \gamma ) = \pi _{\pm } (\gamma )$, and for $\tilde{h} \in \tilde{\PFD} := q\bigl( \delta_{s \inv} (\Sigma) \bigr)$ a short computation yields
	\begin{align*}
		\Bigl( \pi_s(\tilde{\gamma}) 1_{\tilde{\PFD}} \Bigr)\bigl( q( \tilde{h}) \bigr) = \Bigl( \pi_\pm(\gamma) 1_\PFD \Bigr)\bigl( q(h) \bigr).
	\end{align*}
Consequently, the sets 
$\{ \pi _{\pm} (\gamma )1_\PFD \mid
\gamma \in \mathrm{pr} (\Gamma _G)\}$ and $\{ \pi _{s} (\tilde \gamma )1_{\tilde{\PFD}} \mid
\tilde \gamma \in \Gamma _s\}$ coincide, and we have found an
orthonormal basis in the orbit of $\pi _s$. 

Finally, we note that the $\pi _s$ exhaust all representations in
$SI/Z(G)$: since
$$
\pi _{\pm } (\delta _s (e^{tZ}))  = \pi _{\pm } (e^{s^N tZ}) = e^{\pm
  2\pi i \langle Z^*, s^NtZ\rangle } =   e^{\pm
  2\pi i \langle s^N Z^*, tZ\rangle } \, ,
$$
the representation $\pi _s$ is associated to $l=\pm s^NZ^*$, and all
representations in $SI/Z(G)$ are obtained in this way. Theorem~\ref{MainThmUnif}
is thus proved completely. 
	\end{proof}

\section{Examples} \label{SectionExs}
This section provides three  examples that 
illustrate the main theorems. 

\begin{ex}
  Let $G$ be nilpotent with $\dim(G)\leq 6$. Then $G$ is graded with
  $\dim(Z(G) = 1 $ or $=2$. In the former case, the existence of an
  orthonormal basis in the orbit of a representation in $SI/Z(G)$
  follows from Theorems~\ref{MainThmQL} or~\ref{MainThmUnif}. In the
 latter case, the existence follows from Theorem~\ref{TVMainThm}: The conditions (i) --- (ii) are
satisfied because groups of low dimension have a relatively simple structure: Most
$SI/Z$-groups $G$ are a semidirect product of the form $G = \PID \rtimes \R^d $ of
an Abelian polarization $\PID$ and another Abelian subgroup. The
remaining ones have a normal Abelian and thus polarizing subgroup
$\PID$ and the quotient group $\PID \rquo G$ is also
Abelian.

This class of examples  was obtained in  H\"ofler's
thesis~\cite{Hoe14}. H\"ofler's work is based on the explicit formulas for
the irreducible representations of Nielsen~\cite{Nielsen} and could be done
by hand without substantial input from the theory of nilpotent
groups. Theorem~\ref{MainThmQL} or~\ref{MainThmUnif} offer a
systematic approach that carries over to a large class of nilpotent
groups. 

\end{ex}

	\begin{ex} [The Dynin-Folland Group $\HG{2}{1}$]
The first example of an $SI/Z$-group with a non-Abelian quotient group occurs in dimension
$7$, this is the so-called Dynin-Folland group $\HG{2}{1}$ as
in~\cite{DynFollGr}. As an illustration of  the abstract
construction of orthonormal bases we work out the details for this
example. 

The Dynin-Folland  group was introduced by Dynin~\cite{Dyn1} to study
pseudo-differential operators on the Heisenberg 
group and taken up again  by Folland~\cite{FollMeta} for his detailed study of so-called
meta-Heisenberg groups. Precisely, $\HG{2}{1}$ is a 
semidirect
product $\HG{2}{1} = \R ^4 \rtimes \mathbf{H}_1$ of $\R
^4$ and the $3$-dimensional Heisenberg group $\mathbf{H}_1$. Its $3$-step nilpotent Lie algebra $\HA{2}{1}$ is defined by a strong Malcev basis $\{ Z, Y_1, Y_2, Y_3, X_1, X_2,
X_3 \}$ with Lie brackets
	\begin{align}
	\left[\begin{array}{c|ccc|ccc}
		[\,.\,, \,.\,]&Y_1& Y_2 &Y_3 & X_1 & X_2 &X_3 \\ \hline
		Y_1 &&&&0&0&-Z \\
		Y_2 &&\textnormal{\Large{0}}&&0&-Z&0 \\
		Y_3 &&&&-Z&-\tfrac{1}{2} Y_1&\tfrac{1}{2} Y_2 \\ \hline
		X_1 &0&0&Z&0&0&0 \\
		X_2 &0&Z&\tfrac{1}{2} Y_1&0&0&-X_1 \\
		X_3 &Z&0&-\tfrac{1}{2} Y_2&0&X_1&0 \\
	\end{array}\right]. \label{LieBracketH21}
	\end{align}
The center of $\HA{2}{1}$ is $\Liez{\HA{2}{1}} = \R Z 
$ and is one-dimensional, the Heisenberg Lie algebra $\mathfrak{h}_1=
\{ X_1,X_2,X_3\} $ is a subalgebra of $\HA{2}{1}$ and the  complement
$\pid:= \Rspan{Z, Y_1, Y_2, Y_3}$ is Abelian and  an ideal of $\HA{2}{1}$. 

For $l = \l Z^* \in \Liez{g}^*$ with $\l \neq 0$,  the matrix representation of $B_l$ is given by
	\begin{align} \label{eq:c26}
	[B_l]
	=
	\left[\begin{array}{c|ccc|ccc}
		l([\,.\,, \,.\,])&Y_1& Y_2 &Y_3 & X_1 & X_2 &X_3 \\ \hline
		Y_1 &&&&0&0&-\l \\
		Y_2 &&\textnormal{\Large{0}}&&0&-\l&0 \\
		Y_3 &&&&-\l&0&0 \\ \hline
		X_1 &0&0&\l&&& \\
		X_2 &0&\l&0&&\textnormal{\Large{0}}& \\
		X_3 &\l&0&0&&& \\
	\end{array}\right].
	\end{align}
Hence, $B_l$ is non-degenerate with $\Pf(l) = \Abs{\l}^3$ and the
coadjoint orbit $\Orbit_l$ is the $6$-dimensional affine subspace $\l
Z^* + \Rspan{Y_1^*, \ldots, X_3^*}$. This implies that,  up to Plancherel measure zero, all coadjoint orbits are flat; these are precisely the orbits $\Orbit_{\l Z^*}, \l \in \R \setminus \{ 0 \}$.
Since $\pid$ is Abelian and of dimension $4$, it is a polarization for all $l \in \Liez{\HA{2}{1}}^*$; moreover, $\PID \rquo G \cong \mathbf{H}_1$. Consequently,
 all representations  $\pi_l \in SI/Z(\HG{2}{1})$ can be realized on
 $\L{2}{\mathbf{H}_1}$ in terms of the strong Malcev coordinates $\{
 Z, \ldots, X_3 \}$. 

Comparing \eqref{eq:c26} and \eqref{tildeB_l}, we set 
$\tilde{X}_3:= \l^{-1} X_1, \tilde{X}_2:= \l^{-1} X_2, \tilde{X}_1:=
\l^{-1} X_3$, then $\{\tilde X_3, \tilde X_2, \tilde X_1 \}$ is a basis that parametrizes the orbit $l\cdot
G$. Since we have only relabeled and rescaled the original basis, $\{Z, Y_1, Y_2, Y_3, \l^{-1} X_1, \l^{-1} X_2,
\l^{-1} X_3 \}$ is a strong Malcev basis and thus also a 
Ch-R-admissible basis  of $\Lie{g}$ subordinate to $l =
\l Z^*$.  The assumptions of  Theorem~\ref{TVMainThm} are satisfied and
the representation $\pi _l$ possesses an orthonormal basis associated
to the quasi-lattice $$\Gamma' :=
\pr \bigl( \exp(\R Y_1) \cdots \exp(\R \tilde{X}_1) \bigr) \subseteq
\HG{2}{1}/\exp(\R Z) \, . $$ 
If  $\l$ is rational, then the additional conditions (iii) - (v)  of
Theorem~\ref{TVMainThm} are satisfied, and there exists a uniform
subgroup $\Gamma $ of $\HG{2}{1}/\exp(\R Z)$, such that $\{ C \Abs{\l}^{-3/2} \pi _l
(\gamma ) 1_F \mid \gamma \in \Gamma \}$ is an orthonormal basis of
$\L{2}{\mathbf{H}_1}$.


In the following we perform the explicit
computations for the case of a uniform subgroup. We begin with the group law
 in strong Malcev coordinates. Let $g := \exp(z Z) \cdots
\exp(\tilde{x}_1 \tilde{X}_1)$ and $g' := \exp(z' Z) \cdots
\exp(\tilde{x}'_1 \tilde{X}_1)$ be two  
elements in $\HG{2}{1}$. A straight-forward computation based on the pairwise
commutation relations of their exponential factors yields 
	\begin{align}
		g g' &= \exp\Bigl( (z + z' + \frac{1}{\lambda} \sum_{j = 1}^3
                x_j y'_j -  \tfrac{1}{2\lambda ^2} \tilde{x}_1 \tilde{x}_2 y'_3) Z \Bigr)
                \exp\Bigl( (y_1 + y'_1 +  \tfrac{1}{2\lambda } \tilde{x}_2 y'_3)
                Y_1 \Bigr) \nn \\ 
		 &\hspace{15pt} \cdot \exp\Bigl( (y_2 + y'_2) Y_2 \Bigr) \exp\Bigl( (y_3 + y'_3) Y_3 \Bigr) \exp\Bigl( (\tilde{x}_3 + \tilde{x}'_3 + \l^{-1} \tilde{x}_1 \tilde{x}'_2) \tilde{X}_3 \Bigr) \exp\Bigl( (\tilde{x}_2 + \tilde{x}'_2) \tilde{X}_2 \Bigr) \nn \\
		  &\hspace{15pt} \cdot \exp\Bigl( (\tilde{x}_1 + \tilde{x}'_1) \tilde{X}_1 \Bigr). \nn
	\end{align}
The polynomial functions $P_Z, \ldots, P_{\tilde{X}_1}$ from Lemma~\ref{LemPropStrongMC} are thus given by
\begin{equation}
\left\{\begin{array}{rcl}
	P_Z(z, \ldots, \tilde{x}'_1) &=& z + z' + \frac{1}{\l} \sum_{j
          = 1}^3  \tilde{x}_j y'_j - \frac{1}{2 \l^2} \, \tilde{x}_1 \tilde{x}_2 y'_3, \\
	P_{Y_1}(z, \ldots, \tilde{x}'_1) &=& y_1 + y'_1 + \frac{1}{2 \l} \, \tilde{x}_2 y'_3,\\
	P_{Y_2}(z, \ldots, \tilde{x}'_1) &=& y_2 + y'_2 , \\
	P_{Y_3}(z, \ldots, \tilde{x}'_1) &=& y_3 + y'_3 , \\
	P_{\tilde{X}_3}(z, \ldots, \tilde{x}'_1) &=& \tilde{x}_3 +
        \tilde{x}'_3 + \frac{1}{\l} \, \tilde{x}_1 \tilde{x}'_2 , \\ 
	P_{\tilde{X}_2}(z, \ldots, \tilde{x}'_1) &=& \tilde{x}_2 + \tilde{x}'_2, \\
	P_{\tilde{X}_1}(z, \ldots, \tilde{x}'_1) &=& \tilde{x}_1 +
        \tilde{x}'_1 . \\
\end{array}\right. \nn
\end{equation} 
 The coefficients of these polynomials are  $1, \tfrac{1}{\l}, \tfrac{1}{2 \l}, \tfrac{1}{2\l^2}$. For rational
$\l = p/q \in \mathbb{Q}$ with $p,q$ relatively prime integers,
these are the numbers $1, q/p, q/2p$ and $q^2/2p^2$. According to the
construction of the uniform subgroup in Proposition~\ref{Subgroup}, we must choose $K \in \N $ such that all
denominators divide $K$; in this case $K=2p^2$ suffices. We then
employ the change of basis from $\{ Z, \ldots, \tilde{X}_1 \}$ to $\{
K^{-16} Z, K^{-8} Y_1, K^{-4} Y_2, K^{-2} Y_3, K^2 \tilde{X}_3, K^4
\tilde{X}_2, K^8 \tilde{X}_1 \}$. In these coordinates the  group multiplication is described
by the polynomials $P'_Z, \ldots, P'_{\tilde{X}_1}$ given by 
\begin{equation}
\left\{\begin{array}{rcl}
	P'_Z(z, \ldots, \tilde{x}'_1) &=& z + z' + \frac{K^{16}}{\l}
        \sum_{j = 1}^3  \tilde{x}_j y'_j - \frac{K^{26}}{2 \l^2} \, \tilde{x}_1 \tilde{x}_2 y'_3, \\
	P'_{Y_1}(z, \ldots, \tilde{x}'_1) &=& y_1 + y'_1 + \frac{K^{10}}{2 \l} \, \tilde{x}_2 y'_3,\\
	P'_{Y_2}(z, \ldots, \tilde{x}'_1) &=& y_2 + y'_2 ,\\
	P'_{Y_3}(z, \ldots, \tilde{x}'_1) &=& y_3 + y'_3 ,  \\
	P'_{\tilde{X}_3}(z, \ldots, \tilde{x}'_1) &=& \tilde{x}_3 +
        \tilde{x}'_3 + \frac{K^{10}}{ \l} \tilde{x}_1 \tilde{x}'_2 , \\ 
	P'_{\tilde{X}_2}(z, \ldots, \tilde{x}'_1) &=& \tilde{x}_2 + \tilde{x}'_2, \\
	P'_{\tilde{X}_1}(z, \ldots, \tilde{x}'_1) &=& \tilde{x}_1 + \tilde{x}'_1. \\
\end{array}\right. \nn
\end{equation} 
Moreover,
	\begin{align*}
		\Gamma_G :=& \bigl\{ \exp(\zeta K^{-16} Z) \exp(\vartheta_1 K^{-8} Y_1) \exp(\vartheta_2 K^{-4} Y_2) \exp(\vartheta_3 K^{-2} Y_3) \\
		&\hspace{3pt} \exp(\eta_3 K^2 \tilde{X}_3) \exp(\eta_2
                K^4 \tilde{X}_2) \exp(\eta_1 K^8 \tilde{X}_1) \mid
               \zeta,  \vartheta_1, \ldots, \eta_3 \in \Z  \bigr\}
	\end{align*}
forms a uniform subgroup of $\HG{2}{1}$ with fundamental domain
	\begin{align*}
		\Sigma_G :=& \bigl\{ \exp(\tilde{x}_1 K^8
                \tilde{X}_1) 
   \exp(\tilde{x}_2 K^4 \tilde{X}_2)  \exp(\tilde{x}_3 K^2
   \tilde{X}_3) \\ 
&  \exp(y_3 K^{-2} Y_3)  \exp(y_2 K^{-4}Y_2) \exp(y_1 K^{-8} Y_1) \exp(z K^{-16} Z) \mid z,  y_1, \ldots, \tilde{x}_3 \in [0, 1)
                \bigr\}, 
	\end{align*}
so we pick $\Gamma := \pr(\Gamma_G)$ with fundamental domain $\Sigma
:= \pr(\Sigma_G)\subseteq \HG{2}{1}/\R Z $ to construct an  orthonormal basis.

We now realize $\pi$ in terms of the new Malcev coordinates. Since
$\mathbf{H}_1$ is a subgroup of $\HG{2}{1}$, the natural splitting $g
= p(g) s(g) = mh$ is satisfied by
 $$m = \exp(z K^{-16} Z) \exp(y_1 K^{-8} Y_1)
\exp(y_2 K^{-4} Y_2) \exp(y_3 K^{-2} Y_3)$$
 and $$h = \exp(\tilde{x}_3
K^2 \tilde{X}_3) \exp(\tilde{x}_2 K^4 \tilde{X}_2) \exp(\tilde{x}_1
K^8 \tilde{X}_1)\, .
$$ 
Following~\eqref{ab}, for a given  $h' := \exp(t_3 K^2
\tilde{X}_3) \exp(t_2 K^4 \tilde{X}_2) \exp(t_1 K^8 \tilde{X}_1)$ we
compute  
	\begin{align*}
		h' g &= h' m h'^{-1} h' h \\
		&= \exp\Bigl( \bigl( z + \frac{K^{16}}{\l} \sum_{j =
                  1}^3 t_j y_j - \frac{K^{26}}{2 \l^2} \, t_1 t_2 y_3
                \bigr) K^{-16} Z \Bigr) \exp\Bigl( \bigl( t_1 +
                \frac{K^{10}}{2 \l} \, t_2 y_3 \bigr) K^{-8}Y_1 \Bigr)
                \\ 
		&\hspace{15pt} \cdot \exp\Bigl( y_2 K^{-4} Y_2 \Bigr) \exp\Bigl( y_3 K^{-2} Y_3 \Bigr) \\
		&\hspace{15pt} \cdot \exp\Bigl( \bigl( t_3 + \tilde{x}_3 + \frac{K^{10}}{ \l} t_1 \tilde{x}_2 \bigr) K^2 \tilde{X}_3 \Bigr) \exp\Bigl( (t_2 + \tilde{x}_2) K^4 \tilde{X}_2 \Bigr) \exp\Bigl( (t_1 + \tilde{x}_1) K^2\tilde{X}_1 \Bigr) \\
		 &= m'' \, h'h \, .
	\end{align*}
Since $\bracket{\lambda Z^*}{\log \big( h' m {h'}\inv \big)} =
\bracket{\lambda \mathrm{Ad}^*({h'}\inv) Z^*}{\log (m)}$, the
coefficients of the  $y_j$'s in the $Z$-component yield the 
polynomial functions describing the 
orbit $l \cdot G$ (cf.~Theorem~\ref{AdaptedChevalleyRosenlicht}) 
\begin{equation}
\left\{\begin{array}{rclcl}
		Q_3(t_3, t_2, t_1) &=& t_3 + \tilde{Q}_3(t_2, t_1) &=& t_3 - \frac{K^{10}}{2 \l^2} \, t_1 t_2, \\
		Q_2(t_2, t_1) &=& t_2 + \tilde{Q}_2(t_1) &=& t_2, \\
		Q_1(t_1) &=& t_1. &&
\end{array}\right. \nn
\end{equation}

We denote the coordinates of $m'' := p(h' g)$ by $z'', y''_1, y''_2,
y''_3$. Since $\mathbf{H}_1$ is a subgroup, $q(h' h) = h' h$ and $p(h'
h) = e$, and the commutativity of $\pid$ implies 
	\begin{align*}
		\log(m'') = z'' Z * y''_1 Y_1 * y''_2 Y_2 * y''_3 Y_3 = z'' Z + y''_1 Y_1 + y''_2 Y_2 + y''_3 Y_3.
	\end{align*}
Hence, the action of  $\pi_l$ on a function $f\in L^2(\mathbf{H}_1)$  is explicitly given by
	\begin{align}
		\big( \pi_l(mh) f \big)(h') &= e^{2 \pi i
                  \bracket{\l Z^*}{\log ( h' m {h'} \inv )}}
                \hspace{2pt} e^{2 \pi i \bracket{\l Z^*}{\log ( p(h'
                    h) )}} \hspace{2pt} f(h'h) \notag \\ 
		&= e^{2 \pi i (\l K^{-16} z + \sum_{j=1}^3 t_j y_j -
                  \frac{K^{10}}{2 \l } \, t_1 t_2
                  y_3)} \label{DynFollRep1}  \\
		&\hspace{10pt} \times f\biggl( \exp \Bigl( \bigl( t_3
                + \tilde{x}_3 + \frac{K^{10}}{ \l} t_1 \tilde{x}_2
                \bigr) K^2 \tilde{X}_3 \Bigr) \exp \Bigl( (t_2 +
                \tilde{x}_2) \tilde{X}_2 \Bigr) \exp \Bigl( (t_1 +
                \tilde{x}_1) \tilde{X}_1 \Bigr) \biggr). \notag
	\end{align}


In this set of Malcev coordinates, the orthonormal basis 
$ \bigl\{ K^{-7} \hspace{2pt} \Abs{\l}^{-3/2} \hspace{2pt} \pi(\gamma) 1_\PFD \mid \in \Gamma \bigr\}$
 of $\L{2}{\mathbf{H}_1}$ becomes the explicit orthonormal basis
	\begin{align} \label{eq:c34}
 		\Bigl \{ e^{2 \pi i (\sum_{j=1}^3 t_j \vartheta_j
                  -                 \frac{K^{10}}{2 \l } \, t_1 t_2 \vartheta_3)}
                \hspace{2pt} 1_{[0,1)^3} \bigl( t_3 + \eta_3 +
                \frac{K^{10}}{ \l} t_1 \eta_2 , t_2 + \eta_2, t_1 +
                \eta_1 \bigr) \mid \vartheta_1, \ldots, \eta_3 \in \Z
                \Bigr \}  
	\end{align}
 of $\L{2}{\R^3} \cong L^2(\mathbf{H}_1)$.
Of course, once such an explicit formula is derived, it is not difficult
to check directly that \eqref{eq:c34} is an orthonormal basis.

 We  discuss briefly  another construction of an  orthonormal basis of
 $\L{2}{\R^3}$. This construction  is  based on a gradation of $
 \HA{2}{1}$ and Theorem~\ref{MainThmQL}. For contrast we  treat the quasi-lattice case. 
 We observe that $\HA{2}{1}$ admits the
 gradation 
$\HA{2}{1} = \ggl _3 \oplus \ggl _2 \oplus \ggl _1$
with 
        \begin{align*}
        	\ggl _3 := \R Z, \hspace{10pt} \ggl _2 := \Rspan{Y_1,
                  Y_2, X_1}, \hspace{10pt} \ggl _1 := \Rspan{Y_3, X_2, X_3}.
        \end{align*}

Setting  $X'_1 := Y_3$ and $ Y'_3 := X_1$, the table of the Lie
algebra is 
	\begin{align}
	\left[\begin{array}{c|ccc|ccc}
		[\,.\,, \,.\,]&Y_1& Y_2 &Y'_3 & X'_1 & X_2 &X_3 \\ \hline
		Y_1 &&&&0&0&-Z \\
		Y_2 &&\textnormal{\Large{0}}&&0&-Z&0 \\
		Y'_3 &&&&Z&0&0 \\ \hline
		X'_1 &0&0&-Z&0&-\tfrac{1}{2} Y_1&\tfrac{1}{2} Y_2 \\
		X_2 &0&Z&0&\tfrac{1}{2} Y_1&0&-Y'_3 \\
		X_3 &Z&0&0&-\tfrac{1}{2} Y_2&Y'_3&0 \\
	\end{array}\right]. \label{ChVLieBracketH21}
	\end{align}
We notice that $\pid := \Rspan{Z, Y_1, Y_2, Y'_3}= \ggl _3 \oplus \ggl
_2$ is Abelian and that $\ggl / \pid $ is Abelian  and thus $M \rquo G \cong
(\R ^3, +)$, but the complement $\qa = \Rspan{X'_1, X_2, X_3}$ is not
a subalgebra. However, for $l = \l Z^*$, $\l \neq 0$, the polynomial functions $Q_j$ which parametrize the orbit $l \cdot G$ are much simpler,
namely of order $1$, as we deduce from the lower left block in
\eqref{ChVLieBracketH21}. Precisely, we choose $\tilde{X}'_3 := - \l^{-1}
X'_1, \tilde{X}_2 := \l^{-1} X_2, \tilde{X}_1 := \l^{-1} X_3$ as a
basis for the parametrization.  
Theorem~\ref{MainThmQL}  applies and asserts the existence of  an
orthonormal basis of $\L{2}{\PID \rquo \HG{2}{1}}$  arising from the
quasi-lattice 
	\begin{align*}
		\Gamma'_{\HG{2}{1}} := \exp(\Z Z) \exp(\Z Y_1) \exp(\Z Y_2) \exp(\Z Y'_3) \exp(\Z \tilde{X'}_3) \exp(\Z \tilde{X}_2) \exp(\Z \tilde{X}_1) \subseteq \HG{2}{1}.
	\end{align*}

To  be more specific, let $\pi _l'$ be the unitary representation
corresponding to $l= \lambda Z^*$, but now induced from the
polarization  $\pid = \ggl _3 \oplus \ggl _2$. By the general theory
$\pi _l'$ is equivalent to $\pi _l$ in~\eqref{DynFollRep1}.
 This  polarization leads to a simpler formula for the
 representation. 
Indeed, the first exponent of 
	\begin{align*}
		\Bigl( \pi'_l(mh) f \Bigr) \bigl(q(h') \bigr) &= e^{2 \pi i \bracket{\l Z^*}{\log \bigl( h' m (h')\inv \bigr)}} \hspace{2pt} e^{2 \pi i \bracket{\l Z^*}{\log \bigl( p(h' h) \bigr)}} \hspace{2pt} f(h'h)
	\end{align*}
is a sum of first-order polynomial functions, and  the translation
$f(h' h)$ is Abelian, because of $[\qa, \qa] \subseteq \Rspan{Y_1, Y_2,
  Y_3}$; the $Z$-component of the non-vanishing
factor $$p(h' h) = \exp\bigl( (\tfrac{t_1 t_2 \tilde{x}_3}{2 \l^2} +
\tfrac{t_1 \tilde{x}_2 (t_3 + \tilde{x}_3)}{\l^2}) Z \bigr) \exp
\bigl(-\tfrac{t_2 \tilde{x}_3}{2 \l^2} Y_1 \bigr) \bigl(\tfrac{t_1
  \tilde{x}_3}{2 \l^2} Y_2 \bigr) \exp\bigl( \tfrac{t_1
  \tilde{x}_2}{\l^2} Y'_3 \bigr) \in \PID$$ enters the second
exponent. All in all we have
	\begin{align*}
		\Bigl( \pi'_l(mh) f \Bigr) \bigl(q(h') \bigr) &= e^{2
                  \pi i (\l z + \sum_{j=1}^3 t_j y_j)} \hspace{2pt}
                e^{2 \pi i \bigl( \tfrac{t_1 t_2 \tilde{x}_3}{2 \l} +
                  \tfrac{t_1 \tilde{x}_2 (t_3 + \tilde{x}_3)}{\l}
                  \bigr)} \\ 
		& \hspace{15pt} f\Bigl( \exp\bigl( (t_3 + \tilde{x}_3) \tilde{X}_3 \bigr) \exp\bigl( (t_2 + \tilde{x}_2) \tilde{X}_2 \bigr) \exp\bigl( (t_1 + \tilde{x}_1) \tilde{X}_1 \bigr) \Bigr).
	\end{align*}

We therefore obtain the orthonormal basis $\{  \Abs{\l}^{-3/2} \pi
_l'(\vartheta \eta \inv ) 1_\PFD \mid \vartheta \eta \in \Gamma \}$ or 
	\begin{align}
		\Bigl\{ e^{2     \pi i \sum_{j=1}^3 t_j \vartheta_j} \hspace{2pt}
                e^{2 \pi i \bigl( \tfrac{t_1 t_2 \eta_3}{2 \l} +
                  \tfrac{t_1 \eta_2 (t_3 + \eta _3)}{\l}
                  \bigr)}  1_{[0,1)}(t_1 + \eta_1, t_2 + \eta_2, t_3
  + \eta_3)   \mid \vartheta _j,  \eta _j \in \Z  \Bigr\} \label{DFonb2}
	\end{align} 
 of $\L{2}{\R^3}$. 
This construction resembles the Gabor orthonormal basis for $L^2(\R
^3)$ from Section~2.1. Again, once \eqref{DFonb2} is written down, the
orthonormality and the completeness are easy to check. The point here
is that the appropriate choice of coordinates and polarization are
guided by a theory and are rather difficult to guess by hand. 
\end{ex}

	\begin{rem}
This example can be generalized to an infinite family of graded $3$-step nilpotent
groups $\HG{2}{j}$ of dimension $4j+3$. Technically, meta-Heisenberg groups $G$ such as $\HG{2}{j}$ arise from the Lie
algebra $\Lie{g}$ generated by a basis of
left-invariant vector fields of a $2$-step nilpotent group $\bar{G}$ and multiplication by first-order
polynomials on $\bar{\Lie{g}} \cong \R^{d}$ in the variables determined by
the basis. In the case $G = \HG{2}{j}$ we have $\bar{\Lie{g}} = \mathbf{H}_j \cong \R^{2j +1} =: \R^d$. See~\cite{DynFollGr} for a detailed analysis of the
Dynin-Folland groups.
	\end{rem}

	\begin{ex}
The following $7$-dimensional Lie algebra is a slight variation of the
Lie algebra in \cite[Rem.~3.1.6]{FiRuz}. It is an example for which
only the quasi-lattice version of the technical
Theorem~\ref{TVMainThm} yields an orthonormal basis
of $\L{2}{\R^3}$, but not its special cases for graded groups. 

Let $\{X_1, \ldots, X_7\}$ be the Euclidean standard basis for $\Lie{g} := \R^7$ endowed with the Lie bracket relations
	\begin{align*}
		[X_1, X_j] = X_{j+1} \hspace{10pt} \mbox{ for } j = 2, \ldots, 6, \hspace{10pt} [X_2, X_3] = \sqrt{2} \hspace{2pt} X_6, \\
		[X_2, X_4] = [X_5, X_2] = [X_3, X_4] = X_7.
	\end{align*}
As in \cite[Rem.~3.1.6]{FiRuz} 
these Lie brackets define a $7$-dimensional Lie
algebra with $1$-dimensional center 
that cannot be equipped with any gradation.
The bracket $[X_2,X_3]= \sqrt{2} X_6$ excludes the existence of a
rational structure. 
 Consequently  we cannot apply Theorems~\ref{MainThmQL} or
\ref{MainThmUnif}. 

Nevertheless, we can apply  Theorem~\ref{TVMainThm}. Indeed, the basis $\{ X_7, \ldots, X_1 \}$ is a strong Malcev basis of $\Lie{g}$ and $\pid := \Rspan{X_7, X_6, X_5, X_4}$ is a polarizing ideal for every $l := \l X_7^*$, $\l \in \R \setminus \{ 0 \}$. Since
	\begin{align*}
	[B_l]
	=
	\left[\begin{array}{c|ccc|ccc}
		l([\,.\,, \,.\,])&X_6& X_5 &X_4&X_3& X_2& X_1 \\ \hline
		X_6&&&&0&0&-\l \\
		X_5&&\text{\Large{0}}&&0&\l&0 \\
		X_4&&&&-\l&-\l&0 \\ \hline
		X_3&0&0&\l&&& \\
		X_2&0&-\l&\l&&\text{\Large{0}}& \\
		X_1&\l&0&0&&& \\
	\end{array}\right],
	\end{align*}
the basis $\{ Z, Y_1, \ldots, \tilde{X}_1 \} := \{ X_7, X_6, X_5, X_4, \l \inv X_3, -\l \inv X_2, \l \inv X_1 \}$ is Ch-R-admissible subordinate to $l$. Thus, conditions (i)---(ii) of Theorem~\ref{TVMainThm} are satisfied and we obtain an orthonormal basis for $\L{2}{\PID \rquo G} \cong \L{2}{\R^3}$ derived from a quasi-lattice.


	\end{ex}

\end{document}